\DeclareFontFamily{U}{wncy}{}
\DeclareFontShape{U}{wncy}{m}{n}{<->wncyr10}{}
\DeclareSymbolFont{mcy}{U}{wncy}{m}{n}
\DeclareMathSymbol{\Sh}{\mathord}{mcy}{"58}
\newcommand{\Rbb}{\mathbb{R}}
\newcommand{\Fbb}{\mathbb{F}}
\newcommand{\Nbb}{\mathbb{N}}
\newcommand{\Qbb}{\mathbb{Q}}
\newcommand{\Zbb}{\mathbb{Z}}
\newcommand{\Fpbb}{\mathbb{F}_{p}}
\newcommand{\Qpbb}{\mathbb{Q}_{p}}
\newcommand{\Zpbb}{\mathbb{Z}_{p}}
\newcommand{\Pbb}{\mathbb{P}}
\newcommand{\Abb}{\mathbb{A}}
\newcommand{\Gmbb}{\mathbb{G}_{m}}
\newcommand{\ds}{\displaystyle}
\theoremstyle{definition}
\newtheorem{definition}{Definition}[section]
\newtheorem{remark}[definition]{Remark}
\newtheorem{example}[definition]{Example}
\theoremstyle{plain}
\newtheorem{theorem}{Theorem}[section]
\newtheorem{proposition}[theorem]{Proposition}
\newtheorem{corollary}[theorem]{Corollary}
\begin{document}



\title{\textbf{Rational and integral points on Markoff-type K3 surfaces}}
	\author{\textsc{Quang-Duc Dao}} 
	\date{}
	\maketitle

\begin{abstract}
Following recent works by E. Fuchs \textit{et al.} and by the author, we study rational and integral points on Markoff-type K3 (MK3) surfaces, i.e., Wehler K3 surfaces of Markoff type. In particular, we construct a family of MK3 surfaces which have a Zariski-dense set of rational points but fail the integral Hasse principle due to the Brauer--Manin obstruction and provide some counting results for this family. We also give some remarks on Brauer groups, Picard groups, and failure of strong approximation on MK3 surfaces.
\end{abstract}

\section{Introduction}
Let $X$ be an affine variety over $\Qbb$, and $\mathcal{X}$ an integral model of $X$ over $\Zbb$, which is an affine scheme of finite type over $\Zbb$ whose generic fiber is isomorphic to $X$. Define the set of adelic points $X(\textbf{\textup{A}}_{\Qbb}) := \sideset{}{'}\prod_{p} X(\Qpbb)$, where $p$ is a prime number or $p = \infty$ (with $\Qbb_{\infty} = \Rbb$). Similarly, define $\mathcal{X}(\textbf{\textup{A}}_{\Zbb}) := \prod_{p} \mathcal{X}(\Zpbb)$ (with $\Zbb_{\infty} = \Rbb$). We say that $X$ \textit{fails the Hasse principle} if 
$$ X(\textbf{\textup{A}}_{\Qbb}) \not= \emptyset \hspace{0.5cm}\textup{but}\hspace{0.5cm} X(\Qbb) = \emptyset $$
We say that $\mathcal{X}$ \textit{fails the integral Hasse principle} if 
$$ \mathcal{X}(\textbf{\textup{A}}_{\Zbb}) \not= \emptyset \hspace{0.5cm}\textup{but}\hspace{0.5cm} \mathcal{X}(\Zbb) = \emptyset.) $$
We say that $X$ \textit{satisfies weak approximation} if the image of $X(\Qbb)$ in $\prod_{v} X(\Qbb_{v})$ is dense, where the product is taken over all places of $\Qbb$. Finally, we say that $\mathcal{X}$ \textit{satisfies strong approximation} if $\mathcal{X}(\Zbb)$ is dense in $\mathcal{X}(\textbf{A}_{\Zbb})_{\bullet} := \prod_{p} \mathcal{X}(\Zpbb) \times \pi_{0}(X(\Rbb))$, where $\pi_{0}(X(\Rbb))$ denotes the set of connected components of $X(\Rbb)$. 

In general, few varieties satisfy the Hasse principle. In his 1970 ICM address \cite{Man71}, Manin introduced a natural cohomological obstruction to the Hasse principle, namely the \textbf{Brauer--Manin obstruction} (which has been extended to its integral version in \cite{CTX09}). If $\textup{Br}\,X$ denotes the cohomological Brauer group of $X$, i.e., $\textup{Br}\,X := \textup{H}^{2}_{\textup{ét}}(X,\Gmbb)$, we have a natural pairing from class field theory:
$$ X(\textbf{\textup{A}}_{\Qbb}) \times \textup{Br}\,X \rightarrow \Qbb/\Zbb. $$
If we define $X(\textbf{\textup{A}}_{\Qbb})^{\textup{Br}}$ to be the left kernel of this pairing, then the exact sequence of Albert--Brauer--Hasse--Noether gives us the relation:
$$ X(\Qbb) \subseteq X(\textbf{\textup{A}}_{\Qbb})^{\textup{Br}} \subseteq X(\textbf{\textup{A}}_{\Qbb}). $$
Similarly, by defining the Brauer--Manin set $\mathcal{X}(\textbf{A}_{\Zbb})^{\text{Br}}$, we also have that 
$$ \mathcal{X}(\Zbb) \subseteq \mathcal{X}(\textbf{A}_{\Zbb})^{\text{Br}} \subseteq \mathcal{X}(\textbf{A}_{\Zbb}). $$
This gives the so-called \textit{integral} Brauer--Manin obstruction. We say that \textit{the Brauer--Manin obstruction to the (resp. integral) Hasse principle is the only one} if 

$$ X(\textbf{A}_{\Qbb})^{\text{Br}} \not= \emptyset \iff X(\Qbb) \not= \emptyset.$$

$$ (\mathcal{X}(\textbf{A}_{\Zbb})^{\text{Br}} \not= \emptyset \iff \mathcal{X}(\Zbb) \not= \emptyset.)$$

We are interested particularly in the case where $X$ is a hypersurface, defined by a polynomial equation of degree $d$ in an affine space. The case $d = 1$ is easy and elementary. The case $d = 2$ considers the arithmetic of quadratic forms: for rational points, the Hasse principle is always satisfied by the Hasse--Minkowski theorem, and for integral points, the Brauer--Manin obstruction to the integral Hasse principle is the only one (up to an isotropy assumption) due to work of Colliot-Thélène, Xu \cite{CTX09} and Harari \cite{Ha08}.

However, the case $d = 3$ (of cubic hypersurfaces) is still largely open, especially for integral points. Overall, the arithmetic of integral points on the affine cubic surfaces over number fields is still little understood. An interesting example of affine cubic surfaces is given by \textbf{Markoff surfaces} $U_{m}$ which are defined by 
$$ x^{2} + y^{2} + z^{2} - xyz = m, $$ 
where $m$ is an integer parameter. In \cite{GS22}, Ghosh and Sarnak study the integral points on those affine Markoff surfaces $U_{m}$ both from a theoretical point of view and from numerical evidence. They prove that for almost all $m$, the integral Hasse principle holds, and that there are infinitely many $m$'s for which it fails (Hasse failures). Furthermore, their numerical experiments suggest particularly a proportion of integers $m$ satisfying $|m| \leq M$ of the power $M^{0,8875\dots+o(1)}$ for which the integral Hasse principle is not satisfied.

Subsequently, Loughran and Mitankin \cite{LM20} proved that asymptotically only a proportion of $M^{1/2}/(\log M)^{1/2}$ of integers $m$ such that $-M \leq m \leq M$ presents an integral Brauer--Manin obstruction to the Hasse principle. They also obtained a lower bound, asymptotically $M^{1/2}/\log M$, for the number of Hasse failures which cannot be explained by the Brauer--Manin obstruction. After Colliot-Thélène, Wei, and Xu \cite{CTWX20} obtained a slightly stronger lower bound than the one given in \cite{LM20}, no better result than their number $M^{1/2}/(\log M)^{1/2}$ has been known until now.
\\~\\
\indent In recent work \cite{Dao24a}, we study the set of integral points of a different Markoff-type cubic surfaces whose origin is similar to that of the original Markoff surfaces $U_{m}$, namely the \textit{relative character varieties}, using the Brauer--Manin obstruction as well. The surfaces are given by the cubic equation:

$$ x^{2} + y^{2} + z^{2} + xyz = ax + by + cz + d, $$

\noindent where $a,b,c,d \in \Zbb$ are parameters which satisfy some specific relations (see \cite{CL09}). Due to the similar appearance to the original Markoff surfaces, one may expect to find some similarities in their arithmetic properties. One of the main results in \cite{Dao24} is that a positive proportion of these relative character varieties have no (algebraic) Brauer--Manin obstruction to the integral Hasse principle as well as fail strong approximation, which can be explained by the Brauer--Manin obstruction.

Finally, in this paper, following recent work \cite{Dao24b}, we continue to study the arithmetic of Markoff-type K3 surfaces. Let $K$ be a number field. Denote a point in $\Pbb^{1} \times \Pbb^{1} \times \Pbb^{1}$ by $([x:r], [y:s], [z:t])$. Let $X \subset \Pbb^{1} \times \Pbb^{1} \times \Pbb^{1}$ be a (not necessarily smooth) surface over $K$, given by a $(2, 2, 2)$ form
$$ F(x, r ; y, s ; z, t) \in K[x, r ; y, s ; z, t]. $$
Then $X$ is called a \emph{Wehler surface}. If $X$ is smooth, $X$ is an elliptic K3 surface whose projections $p_{i} : X \rightarrow \Pbb^{1}$ ($i \in \{1,2,3\}$) have fibers as curves of (arithmetic) genus $1$.

A \textbf{Markoff-type K3 surface} $W$ is a Wehler surface whose $(2, 2, 2)$-form $F$ is invariant under the action of the group $\mathcal{G} \subset \text{Aut}(\Pbb^{1} \times \Pbb^{1} \times \Pbb^{1})$ generated by $(x,y,z) \mapsto (-x,-y,z)$ and permutations of $(x,y,z)$. By \cite{FLST23}, there exist $a, b, c, d, e \in K$ so that the $(2, 2, 2)$-form $F$ that defines $W$ has the affine form:
$$ ax^{2}y^{2}z^{2} + b(x^{2}y^{2} + y^{2}z^{2} + z^{2}x^{2}) + cxyz + d(x^{2} + y^{2} + z^{2}) + e = 0. $$

Our main result shows the Brauer--Manin obstruction with respect to explicit elements of the algebraic Brauer group for the existence of integral points on a concrete family of Markoff-type K3 (MK3) surfaces.

\begin{theorem}
For $k \in \Zbb$, let $W_{k} \subset \Pbb^{1} \times \Pbb^{1} \times \Pbb^{1}$ be the MK3 surface defined over $\Qbb$ by the $(2,2,2)$-form 
\begin{equation}
    F(x,y,z) = (x^{2} - 36)(y^{2} - 36)(z^{2} - 36) - m_{0}(xyz + C_{0})^{2} - k = 0.
\end{equation}
Let $\mathcal{U}_{k}$ be the integral model of $U_{k}$ defined over $\Zbb$ by the same equation. If $k$ satisfies the condition:
\begin{enumerate}
    \item $k = \ell^{2}$ where $\ell \in \Zbb, \ell > 1$ such that $\ell \equiv 1$ \textup{mod} $P$, where $P = 2^{3} \times 3 \times 5 \times 7 \times 11 \times 13 \times 31 \times 433 \times 2017 \times 3253 \times 8501 \times 32687 \times 46649 \times 4057231$;
    \item $(p,13)_{p} = 0$ for any prime divisor $p$ of $\ell$,
\end{enumerate}
then $\mathcal{U}_{k}(\textbf{\textup{A}}_{\Zbb}) \not= \emptyset$ and there is an algebraic Brauer--Manin obstruction to the integral Hasse principle for $\mathcal{U}_{k}$, i.e., $\mathcal{U}_{k}(\Zbb) \subset \mathcal{U}_{k}(\textbf{\textup{A}}_{\Zbb})^{\textup{Br}_{1}} = \emptyset$.
\end{theorem}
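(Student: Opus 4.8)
The plan is to exhibit one explicit class $\mathcal{A}$ in the algebraic Brauer group $\textup{Br}_{1}(U_{k})$ whose local invariants sum to a fixed nonzero value at every adelic integral point, while separately checking that adelic integral points exist. For the class I would take a quaternion algebra of the shape $\mathcal{A} = (f, 13)$, where $f$ is a rational function on $U_{k}$ built from the factorization of the defining form — the factors $x^{2}-36 = (x-6)(x+6)$, $y^{2}-36$, $z^{2}-36$ and the term $xyz + C_{0}$ give natural candidates — so that $\mathcal{A}$ represents one of the explicit generators of $\textup{Br}_{1}(U_{k})$ identified in \cite{Dao24b}. The appearance of $13$ in hypothesis (2) indicates that $\Qbb(\sqrt{13})$ is the quadratic field splitting $\mathcal{A}$.

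First I would verify $\mathcal{U}_{k}(\textbf{\textup{A}}_{\Zbb}) \neq \emptyset$ by producing local integral points at every place. Because $k = \ell^{2}$ is a perfect square, the defining equation puts $\ell^{2}$ on one side, which yields obvious real points and, after reduction, smooth $\Fpbb$-points at the primes of good reduction that lift by Hensel's lemma. At the finitely many primes dividing $P\ell$ I would instead write down explicit solutions by hand; the congruence $\ell \equiv 1 \pmod{P}$ ensures that $\ell$ is a unit — in fact a square — at each prime dividing $P$, which is exactly what makes these constructions uniform.

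The core of the argument is the computation of $\textup{inv}_{p}(\mathcal{A}(P_{p}))$ for each place $p$ and each $P_{p} \in \mathcal{U}_{k}(\Zpbb)$. At primes of good reduction away from $P\ell$ the invariant is automatically $0$, since $\mathcal{A}$ extends to an Azumaya algebra there. At the primes dividing $P$, hypothesis (1) forces the Hilbert symbol $(f(P_{p}), 13)_{p}$ to vanish: the congruence $\ell \equiv 1 \pmod{P}$ makes the relevant arguments into local squares or units with controlled residues, which is precisely why $P$ is chosen to be that particular product — these are the primes dividing the resultants and discriminants attached to $f$ and to the coefficients $m_{0}, C_{0}$. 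At a prime $p \mid \ell$, hypothesis (2), i.e.\ $(p, 13)_{p} = 0$, kills the contribution. After all these cancellations a single place — the prime $13$, or else the archimedean place — should survive and contribute $\tfrac{1}{2} \in \Qbb/\Zbb$.

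By the reciprocity law governing the Brauer--Manin pairing, an adelic point $(P_{p}) \in \mathcal{U}_{k}(\textbf{\textup{A}}_{\Zbb})$ lies in $\mathcal{U}_{k}(\textbf{\textup{A}}_{\Zbb})^{\textup{Br}_{1}}$ only if $\sum_{p} \textup{inv}_{p}(\mathcal{A}(P_{p})) = 0$. Since the computation yields the constant value $\tfrac{1}{2}$ on every adelic point, the Brauer--Manin set is empty, and the inclusion $\mathcal{U}_{k}(\Zbb) \subseteq \mathcal{U}_{k}(\textbf{\textup{A}}_{\Zbb})^{\textup{Br}_{1}}$ then gives $\mathcal{U}_{k}(\Zbb) = \emptyset$. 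The step I expect to be the main obstacle is this uniform local-invariant computation at the bad primes: one must show that across the whole (long) list of primes dividing $P$ the symbol $(f(P_{p}), 13)_{p}$ is independent of the chosen local point and vanishes, and then confirm that exactly one place contributes nontrivially so that the total is $\tfrac{1}{2}$ rather than $0$. Identifying that surviving place, and checking that the precise choice of $m_{0}$, $C_{0}$ and of the list $P$ prevents any accidental extra cancellation, is where the genuine work lies.
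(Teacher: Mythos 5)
Your skeleton coincides with the paper's proof (quaternion classes built from $x \pm 6$ and $13$, place-by-place invariant computations, the prime $13$ as the only surviving place, reciprocity), but the pivotal claim — that \emph{one} fixed class $\mathcal{A} = (f,13)$ has invariant sum constantly $\tfrac{1}{2}$ on all adelic points — is false, and the proof would break exactly there. At $p = 13$ the equation forces $(x^{2}-36)(y^{2}-36)(z^{2}-36) \equiv \ell^{2} \equiv 1 \pmod{13}$ (since $m_{0} = -468 = -36 \cdot 13 \equiv 0$), so all six functions $x \pm 6, y \pm 6, z \pm 6$ are $13$-adic units, and the invariant of, say, $(x-6,13)$ at $13$ is $\tfrac{1}{2}$ precisely when the residue of $x - 6$ is a nonsquare mod $13$ — which genuinely depends on the point: the local point with $(x,y,z) \equiv (0,1,3)$ has $x - 6 \equiv 7$, a nonsquare (invariant $\tfrac12$), while a point with $(x,y,z) \equiv (3,0,1)$ has $x - 6 \equiv 10 = 6^{2}$, a square (invariant $0$, and here $(y-6,13)$ obstructs instead). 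So no single $f$ does the job. The paper's proof instead runs a case analysis mod $13$ showing that \emph{every} point of $\mathcal{U}_{k}(\Zbb_{13})$ gives at least one of the six classes $(x \pm 6, 13), (y \pm 6, 13), (z \pm 6, 13)$ invariant $\tfrac12$ — the only residues making all six factors squares are $x,y,z \in \{3,10\}$, and these are excluded because then each factor is $\equiv -1$ and the product is $\equiv -1 \neq 1$ — while each of these classes has invariant $0$ at every other place. Emptiness of $\mathcal{U}_{k}(\textbf{\textup{A}}_{\Zbb})^{\textup{Br}_{1}}$ follows because that set is contained in the set cut out by this finite family; you must restate your conclusion in this ``for every adelic point, some class obstructs'' form.

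Two further steps are under-specified in a way that matters. First, the vanishing at primes $p \geq 5$, $p \neq 13$ is not about ``resultants and discriminants attached to $f$'': the mechanism is that the equation rewrites as $(x^{2}-36)(y^{2}-36)(z^{2}-36) = \ell^{2} - 13\,(6(xyz - 4330))^{2}$, a norm form for $\Qbb(\sqrt{13})$; if $v_{p}(x-6) > 0$ and $13$ is a nonsquare mod $p$, the right-hand side can only have positive valuation when $p \mid \ell$, and for $p \mid \ell$ your hypothesis (2), $(p,13)_{p} = 0$, says exactly that $13$ \emph{is} a square mod $p$ — that is the (only) role of condition (2). Second, the existence of local integral points at primes of good reduction is not automatic from ``reduction plus Hensel'': a surface over $\Fpbb$ need not have smooth $\Fpbb$-points for small $p$. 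The paper proves existence by slicing with $z = 0$ (or $z = 1$ when $p$ divides $m_{0}C_{0}^{2}+k$ or $m_{0}C_{0}^{2}+k+36^{3}$), obtaining smooth genus-$1$ curves in $\Pbb^{1} \times \Pbb^{1}$ whose Hasse--Weil bound $(\sqrt{p}-1)^{2} \geq 5$ for $p \geq 11$ beats the at most $4$ points at infinity, together with explicit nonsingular solutions at $p \in \{2,3,5,7,13\}$ guaranteed by the congruence $\ell \equiv 1 \pmod{P}$. With these two repairs your outline matches the paper's argument.
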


We also have a counting result on the number of counterexamples to the integral Hasse principle for MK3 surfaces. This asymptotic result is the same as that in \cite{Dao24b}. Recall that for Markoff surfaces, Loughran and Mitankin \cite{LM20} proved that asymptotically a proportion of $M^{1/2}/(\log M)^{1/2}$ of integers $m$ such that $|m| \leq M$ presents an integral Brauer--Manin obstruction.

\begin{theorem}
For the above family of MK3 surfaces, we have
	$$ \# \{k \in \Zbb: |k| \leq M,\ \mathcal{U}_{k}(\textbf{\textup{A}}_{\Zbb}) \not= \emptyset \} \asymp M $$
	and
	$$ \# \{k \in \Zbb: |k| \leq M,\ \mathcal{U}_{k}(\textbf{\textup{A}}_{\Zbb}) \not= \emptyset,\ \mathcal{U}_{k}(\textbf{\textup{A}}_{\Zbb})^{\textup{Br}} = \emptyset \} \gg \frac{M^{1/2}}{\textup{log}\,M}, $$
as $M \rightarrow +\infty$.
\end{theorem}

The structure of the paper is as follows. In Section 2, we provide some background on Wehler K3 surfaces and a recent study of the Markoff-type K3 (MK3) surfaces. In Section 3, we study rational points on a general family of MK3 surfaces and prove that they are Zariski-dense. In Section 4, we first discuss some geometry of Wehler K3 surfaces and their Brauer groups. After the general setting, we turn our attention to a particular family of MK3 surfaces, where we explicitly compute the geometric Picard group and the algebraic Brauer group of the projective closures, and then we compute the algebraic Brauer group of the affine surfaces. In Section 5, we use the algebraic Brauer group to give explicit examples of Brauer--Manin obstructions to the integral Hasse principle for the family of MK3 surfaces, and give some counting results for the Hasse failures. Finally, in Section 6, we give some remarks about Brauer groups, Picard groups, and failure of strong approximation for MK3 surfaces.
\\~\\
\indent \textbf{Notation.} Let $k$ be a field and $\overline{k}$ a separable closure of $k$. We let $G_{k} := \textup{Gal}(\overline{k}/k)$ be the absolute Galois group. A $k$-variety is a separated $k$-scheme of finite type. If $X$ is a $k$-variety, we write $\overline{X} = X \times_{k} \overline{k}$. Let $k[X] = \textup{H}^{0}(X,\mathcal{O}_{X})$ and $\overline{k}[X] = \textup{H}^{0}(\overline{X},\mathcal{O}_{\overline{X}})$. If $X$ is an integral $k$-variety, let $k(X)$ denote the function field of $X$. If $X$ is a geometrically integral $k$-variety, let $\overline{k}(X)$ denote the function field of $\overline{X}$. 

Let $\textup{Pic}\,X = \textup{H}^{1}_{\textup{Zar}}(X,\Gmbb) = \textup{H}^{1}_{\textup{ét}}(X,\Gmbb)$ denote the Picard group of a scheme $X$. Let $\textup{Br}\,X = \textup{H}^{2}_{\textup{ét}}(X,\Gmbb)$ denote the Brauer group of $X$. Let
$$ \textup{Br}_{1}\,X := \textup{Ker}[\textup{Br}\,X \rightarrow \textup{Br}\,\overline{X}] $$
denote the \textbf{algebraic Brauer group} of a $k$-variety $X$ and let $\textup{Br}_{0}\,X \subset \textup{Br}_{1}\,X$ denote the image of $\textup{Br}\,k \rightarrow \textup{Br}\,X$. The image of $\textup{Br}\,X \rightarrow \textup{Br}\,\overline{X}$ is called the \textbf{transcendental Brauer group} of $X$.

Given a field $F$ of characteristic zero containing a primitive $n$-th root of unity $\zeta = \zeta_{n}$, we have $\textup{H}^{2}(F,\mu_{n}^{\otimes 2}) = \textup{H}^{2}(F,\mu_{n}) \otimes \mu_{n}$. The choice of $\zeta_{n}$ then defines an isomorphism $\textup{Br}(F)[n] = \textup{H}^{2}(F,\mu_{n}) \cong \textup{H}^{2}(F,\mu_{n}^{\otimes 2})$. Given two elements $f, g \in F^{\times}$, we have their classes $(f)$ and $(g)$ in $F^{\times}/F^{\times n} = \textup{H}^{1}(F,\mu_{n})$. We denote by $(f,g)_{\zeta} \in \textup{Br}(F)[n] = \textup{H}^{2}(F,\mu_{n})$ the class corresponding to the cup-product $(f) \cup (g) \in \textup{H}^{2}(F,\mu_{n}^{\otimes 2})$. Suppose $F/E$ is a finite Galois extension with Galois group $G$. Given $\sigma \in G$ and $f,g \in F^{\times}$, we have $\sigma((f,g)_{\zeta_{n}}) = (\sigma(f),\sigma(g))_{\sigma(\zeta_{n})} \in \textup{Br}(F)$. In particular, if $\zeta_{n} \in E$, then $\sigma((f,g)_{\zeta_{n}}) = (\sigma(f),\sigma(g))_{\zeta_{n}}$. For all the details, see \cite[Section 4.6, Section 4.7]{GS17}.

Let $R$ be a discrete valuation ring with fraction field $F$ and residue field $\kappa$. Let $v$ denote the valuation $F^{\times} \rightarrow \Zbb$. Let $n > 1$ be an integer invertible in $R$. Assume that $F$ contains a primitive $n$-th root of unity $\zeta$. For $f,g \in F^{\times}$, we have the residue map
$$ \partial_{R} : \textup{H}^{2}(F,\mu_{n}) \rightarrow \textup{H}^{1}(\kappa,\Zbb/n\Zbb) \cong \textup{H}^{1}(\kappa,\mu_{n}) = \kappa^{\times}/\kappa^{\times n}, $$
where $\textup{H}^{1}(\kappa,\Zbb/n\Zbb) \cong \textup{H}^{1}(\kappa,\mu_{n})$ is induced by the isomorphism $\Zbb/n\Zbb \simeq \mu_{n}$ sending $1$ to $\zeta$. This map sends the class of $(f,g)_{\zeta} \in \textup{Br}(F)[n] = \textup{H}^{2}(F,\mu_{n})$ to 
$$ (-1)^{v(f)v(g)} \textup{class}(g^{v(f)}/f^{v(g)}) \in \kappa/\kappa^{\times n}. $$

For a proof of these facts, see \cite{GS17}. Here we recall some precise references. Residues in Galois cohomology with finite coefficients are defined in \cite[Construction 6.8.5]{GS17}. Comparison of residues in Milnor K-Theory and Galois cohomology is given in \cite[Proposition 7.5.1]{GS17}. The explicit formula for the residue in Milnor’s group K2 of a discretely valued field is given in \cite[Example 7.1.5]{GS17}.
\\~\\
\indent \textbf{Acknowledgements.} I would like to express my sincere gratitude to Cyril Demarche for his excellent supervision during my PhD study at Sorbonne University in Paris, France; this research is a continuation from my PhD thesis. I deeply thank Pho Duc Tai, my former undergraduate thesis supervisor at VNU Hanoi University of Science, for his valuable help with the computation of numbers of rational points on Wehler K3 surfaces over finite fields on SageMath. I am sincerely grateful to Joseph H Silverman for his comments on this paper and some information on his article with Fuchs et al. \cite{FLST23}.

\section{Background}
We give some notation and results on Wehler K3 surfaces and the so-called \emph{Markoff-type} K3 surfaces that we study in this paper.

\subsection{Wehler K3 surfaces}
Consider the variety $M = \Pbb^{1} \times \Pbb^{1} \times \Pbb^{1}$ and let $\pi_{1}$, $\pi_{2}$, and $\pi_{3}$ be the projections on the first, second, and third factor: $\pi_{i}(z_{1},z_{2},z_{3}) = z_{i}$. Denote by $L_{i}$ the line bundle $\pi_{i}^{*}(\mathcal{O}(1))$ and set
$$ L = L_{1}^{2} \otimes L_{2}^{2} \otimes L_{3}^{2} = \pi_{1}^{*}(\mathcal{O}(2)) \otimes \pi_{2}^{*}(\mathcal{O}(2)) \otimes \pi_{3}^{*}(\mathcal{O}(2)). $$
Since $K_{\Pbb^{1}} = \mathcal{O}(-2)$, this line bundle $L$ is the dual of the canonical bundle $K_{M}$. By definition, $|L| \simeq \Pbb(\textup{H}^{0}(M,L))$ is the linear system of surfaces $W \subset M$ given by the zeroes of global sections $P \in \textup{H}^{0}(M,L)$. Using affine coordinates $(x_{1}, x_{2}, x_{3})$ on $M = \Pbb^{1} \times \Pbb^{1} \times \Pbb^{1}$, such a surface is defined by a polynomial equation $F(x_{1},x_{2},x_{3}) = 0$ whose degree with respect to each variable is $\leq 2$. These surfaces will be referred to as \textbf{Wehler surfaces}; modulo $\textup{Aut}(M)$, they form a family of dimension $17$.

Fix $k \in \{1,2,3\}$ and denote by $i < j$ the other indices. If we project $W$ to $\Pbb^{1} \times \Pbb^{1}$ by $\pi_{ij} = (\pi_{i}, \pi_{j})$, we get a $2$ to $1$ cover (the generic fiber is made of two points, but some fibers may be rational curves). As soon as $W$ is \emph{smooth}, the involution $\sigma_{k}$ that permutes the two points in each (general) fiber of $\pi_{ij}$ is an involutive automorphism of $W$; indeed $W$ is a K3 surface and any birational self-map of such a surface is an automorphism (see \cite[Lemma 1.2]{Bi97}). By \cite[Proposition 3.1]{CD23}, we have the following general result.

\begin{proposition} 
There is a countable union of proper Zariski closed subsets $(S_{i})_{i \geq 0}$ in $|L|$ such that:
\begin{enumerate}
\item[(1)] If $W$ is an element of $|L| \setminus S_{0}$, then $W$ is a smooth K3 surface and $W$ does not contain any fiber of the projections $\pi_{ij}$, i.e., each of the three projections $(\pi_{ij})_{|W} : W \rightarrow \Pbb^{1} \times \Pbb^{1}$ is a finite map;

\item[(2)] If $W$ is an element of $|L| \setminus (\cup_{i \geq 0} S_{i})$, the restriction morphism $\textup{Pic}\,M \rightarrow \textup{Pic}\,W$ is surjective. In particular, the Picard number of $W$ is equal to $3$.
\end{enumerate}
\end{proposition}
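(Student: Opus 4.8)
The plan is to build $S_{0}$ out of the non-smooth locus together with the ``contains a fiber'' loci, to settle part (1) by Bertini and adjunction, and then to produce the remaining $S_{i}$ as Noether--Lefschetz loci for part (2).

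First, for part (1): since $\mathcal{O}(2)$ is very ample on $\Pbb^{1}$, the bundle $L = L_{1}^{2} \otimes L_{2}^{2} \otimes L_{3}^{2}$ is very ample on $M$, so $|L|$ is a base-point-free linear system embedding $M$ in projective space. By Bertini's theorems a general member $W \in |L|$ is smooth and, as $\dim M = 3$, geometrically connected; the non-smooth members form the discriminant $D$, a proper Zariski closed subset of $|L|$. For smooth $W$, the ideal-sheaf sequence $$ 0 \to L^{-1} \to \mathcal{O}_{M} \to \mathcal{O}_{W} \to 0, $$ together with $K_{M} = L^{-1}$, the vanishing $\textup{H}^{q}(M,\mathcal{O}_{M}) = 0$ for $q > 0$, and Serre duality $\textup{H}^{q}(M,L^{-1}) \cong \textup{H}^{3-q}(M,\mathcal{O}_{M})^{\vee}$, gives $K_{W} = (K_{M} \otimes L)|_{W} = \mathcal{O}_{W}$ and $\textup{H}^{1}(W,\mathcal{O}_{W}) = 0$; hence $W$ is a K3 surface.

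Next, the fiber condition: fix $\{i,j\} \subset \{1,2,3\}$ with remaining index $k$, and note that a fiber of $\pi_{ij}$ is a curve $\cong \Pbb^{1}$ in the $k$-th direction on which $F$ restricts to a form of degree $\leq 2$. Thus ``$W$ contains a given fiber'' is the simultaneous vanishing of the three coefficients of this restriction, i.e.\ three linear conditions on the coefficients of $F$. The incidence set $Z \subset (\Pbb^{1} \times \Pbb^{1}) \times |L|$ of pairs (fiber, containing $W$) is therefore closed, and its image in $|L|$ under the proper projection is closed. A single explicit $W$ containing no fiber of $\pi_{ij}$ shows this image is proper; adjoining the three such loci (one per projection) to $D$ defines $S_{0}$. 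For $W \in |L| \setminus S_{0}$ the map $(\pi_{ij})|_{W}$ is proper, as $W$ is projective, and quasi-finite, since each of its fibers is the finite scheme $W \cap (\text{a fiber of } \pi_{ij})$; being proper and quasi-finite, it is finite.

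For part (2) I would exploit the Hodge-theoretic structure of the family. Over $|L| \setminus D$ the smooth members form a family carrying a polarized variation of Hodge structure on $\textup{H}^{2}$. By the Lefschetz $(1,1)$-theorem a class in $\textup{H}^{2}(W,\Zbb)$ is algebraic exactly when it is of type $(1,1)$, and by the Cattani--Deligne--Kaplan algebraicity of Hodge loci the locus where a fixed integral class remains Hodge is closed algebraic. Ranging over the countably many integral classes outside the image of $\textup{Pic}\,M$ yields countably many closed subsets; let $(S_{i})_{i \geq 1}$ be those which are proper. Off $\cup_{i \geq 0} S_{i}$ every algebraic class on $W$ comes from $\textup{Pic}\,M \to \textup{Pic}\,W$, which is injective with image of rank $3$: the Gram matrix of $L_{1}|_{W}, L_{2}|_{W}, L_{3}|_{W}$ for the intersection form is $\left(\begin{smallmatrix} 0 & 2 & 2 \\ 2 & 0 & 2 \\ 2 & 2 & 0 \end{smallmatrix}\right)$, of determinant $16 \neq 0$. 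Hence the restriction is surjective and $\rho(W) = 3$. The main obstacle is precisely the properness of the $S_{i}$, equivalently that the generic Picard rank is exactly $3$ rather than larger; this does not follow formally, since Picard rank only jumps up under specialization, and must be established by exhibiting one smooth Wehler surface of Picard rank $3$. The cleanest route is a reduction-modulo-$p$ argument: for a member with good reduction at $p$ the specialization $\textup{Pic}\,\overline{W}_{\Qbb} \hookrightarrow \textup{Pic}\,\overline{W}_{\Fbb_{p}}$ is injective, and the right-hand rank is bounded above by the number of Frobenius eigenvalues on $\textup{H}^{2}_{\textup{ét}}$ equal to $p$ times a root of unity (using the Tate conjecture for K3 surfaces over finite fields); combining two well-chosen primes, or one prime where this count equals $3$, pins the characteristic-zero rank at $3$. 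Alternatively one shows directly that the monodromy of the family has no invariants on the orthogonal complement of $\langle L_{1},L_{2},L_{3}\rangle$, so that the very general member carries no extra Hodge classes. Either way, producing one rank-$3$ member makes $\cup_{i \geq 1} S_{i}$ proper and completes the proof.
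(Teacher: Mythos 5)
The paper does not actually prove this proposition: it is quoted verbatim from \cite[Proposition 3.1]{CD23}, so your attempt has to be measured against the standard argument given there. Your part (1) is essentially that argument and is sound: very ampleness of $L$, Bertini plus the ideal-sheaf sequence and adjunction ($K_{M} = L^{-1}$, Serre duality) to get a K3, the incidence-variety argument showing that ``$W$ contains a fiber of $\pi_{ij}$'' (three conditions linear in the coefficients of $F$) defines a closed subset of $|L|$ via a proper projection, and proper $+$ quasi-finite $\Rightarrow$ finite. The only loose end there is exhibiting one member that is smooth and contains no fiber, which is routine --- the explicit surfaces of Section 4.2 of this paper, which are smooth and non-degenerate in the sense of \cite{FLST23}, supply such a witness.

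In part (2) there is a genuine gap, and it sits exactly at the point you flag. First, the properness of the Noether--Lefschetz loci $S_{i}$, $i \geq 1$, \emph{is} the content of the statement, and you do not establish it: neither the van Luijk-style reduction nor the monodromy computation is carried out, and your fallback of ``one prime where this count equals $3$'' cannot exist. For a K3 surface over $\Fpbb$ the number of Frobenius eigenvalues on $\textup{H}^{2}_{\textup{\'et}}(\overline{W}, \Qbb_{\ell}(1))$ that are roots of unity (with multiplicity) is always even: the non-cyclotomic part of the characteristic polynomial has even degree because its roots pair up as $\mu, 1/\mu = \overline{\mu}$ with $\mu \neq \pm 1$, and $b_{2} = 22$ is even --- compare the paper's own computation in Section 4.2, where this count is $8 = 3 + 5$. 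So a single prime can never certify the odd rank $3$; one genuinely needs two primes with a discriminant comparison, or the monodromy argument, neither of which you perform. Second, even granting properness, your conclusion overshoots: off $\cup_{i} S_{i}$ the Hodge-locus argument shows only that $\textup{Pic}\,W$ lies in the \emph{saturation} of $\Zbb D_{1} \oplus \Zbb D_{2} \oplus \Zbb D_{3}$ inside $\textup{H}^{2}(W,\Zbb)$, since any integral class rationally dependent on the $D_{i}$ has Hodge locus equal to the whole base and is discarded by your ``keep only the proper ones'' step. The determinant $16 \neq 0$ gives injectivity and rank $3$, but surjectivity of $\textup{Pic}\,M \rightarrow \textup{Pic}\,W$ additionally requires the image lattice to be primitive: $\frac{1}{2}(D_{1}+D_{2})$ and $\frac{1}{2}(D_{1}+D_{2}+D_{3})$ are excluded because their self-intersections ($1$ and $3$) are odd while the K3 lattice is even, but $\frac{1}{2}D_{i}$ has even square and must be ruled out by a separate argument (e.g.\ the fiber class of an elliptic fibration on a K3 is primitive because such fibrations have no multiple fibers). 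This finite-index bookkeeping is precisely the kind of analysis the paper itself carries out when computing $\textup{Pic}\,\overline{W}$ in Section 4.2, and it is absent from your proposal; as written, your argument proves Picard number $3$ but not the asserted surjectivity.
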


From the second assertion, we deduce that for a \emph{very general} $W$, $\textup{Pic}\,W$ is isomorphic to $\textup{Pic}\,M$: it is the free Abelian group of rank $3$, generated by the classes
$$ D_{i} := [(L_{i})_{|W}]. $$
The elements of $|(L_{i})_{|W}|$ are the curves of $W$ given by the equations $z_{i} = \alpha$ for some $\alpha \in \Pbb^{1}$. The arithmetic genus of these curves is equal to $1$: in other words, the projection $(\pi_{i})_{|W} : W \rightarrow \Pbb^{1}$ is a genus $1$ fibration (see \cite[Lemma 1.1]{Bi97}). Moreover, for a general choice of $W$ in $|L|$, $(\pi_{i})_{|W}$ has 24 singular fibers of type $\textup{I}_{1}$, which are isomorphic to a rational curve with exactly one simple double point. The intersection form is given by $D_{i}^{2} = 0$ and $(D_{i}.D_{j}) = 2$ if $i \not= j$, so that its matrix is given by
\[
\begin{pmatrix}
0 & 2 & 2\\
2 & 0 & 2\\
2 & 2 & 0
\end{pmatrix}.
\]

Note that if $W$ is a smooth K3 surface, then $\textup{Pic}\,W \simeq \textup{NS}\,W$. We have the following result (see \cite[Proposition 1.3]{Bi97} and \cite[Proposition 3.3]{CD23}):

\begin{proposition}
If $W$ is a very general Wehler surface then:
\begin{enumerate}
\item[(1)] $W$ is a smooth K3 surface with Picard number $3$;

\item[(2)] $\textup{Aut}(W) = \langle \sigma_{1}, \sigma_{2}, \sigma_{3} \rangle$, which is a free product of three copies of $\Zbb/2\Zbb$ and $\textup{Aut}(W)^{*}$ is a finite index subgroup in the group of integral isometries of $\textup{Pic}\,W = \textup{NS}\,W$. Here $\textup{Aut}(W)^{*}$ denotes the image of $\textup{Aut}(W)$ in $\textup{\textsf{GL}}(\textup{H}^{2}(W,\Zbb))$ \textup{(see \cite[Section 2.1]{CD23})}.
\end{enumerate}
\end{proposition}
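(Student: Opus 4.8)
The plan is to translate the whole statement into lattice theory on $\Lambda := \textup{NS}\,W \cong \Zbb^3$, equipped with the Gram matrix displayed above, and then to feed the resulting hyperbolic geometry into the global Torelli theorem for K3 surfaces. Part (1) needs no new work: ``$W$ very general'' means $W \in |L| \setminus \bigcup_{i\geq 0} S_i$, so the preceding Proposition applies and gives at once that $W$ is a smooth K3 surface (it lies outside $S_0$) and that $\textup{Pic}\,W = \textup{NS}\,W$ has rank $3$. For Part (2) I would first recall that the representation $\rho : \textup{Aut}(W) \to O(\textup{H}^2(W,\Zbb))$ is injective for any K3 surface, and that for very general $W$ an automorphism acts on the transcendental lattice $T_W$ only through $\pm\,\textup{id}$; since $-\,\textup{id}$ reverses the ample cone it cannot occur, so $g \mapsto \rho(g)|_{\Lambda}$ identifies $\textup{Aut}(W)$ with $\textup{Aut}(W)^*$ and embeds it into the stabilizer of the ample cone in $O(\Lambda)$. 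Everything then reduces to identifying the subgroup of $O(\Lambda)$ generated by the $\sigma_i^*$ and comparing it with all of $O(\Lambda)$.

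The next step is an explicit computation of the involutions on $\Lambda$. Since $\sigma_k$ is the deck transformation of the double cover $\pi_{ij} = (\pi_i,\pi_j)$ (with $\{i,j,k\} = \{1,2,3\}$), it commutes with $\pi_i$ and $\pi_j$ and hence fixes $D_i$ and $D_j$; writing $\sigma_k^* D_k = \alpha D_i + \beta D_j + \gamma D_k$ and imposing that $\sigma_k^*$ be an involutive isometry for the form $D_i^2 = 0,\ D_i\cdot D_j = 2$ forces $\sigma_k^* D_k = 2D_i + 2D_j - D_k$. Thus $\sigma_k^*$ is the reflection $s_{e_k}$ in the class $e_k := D_k - D_i - D_j$, and one computes $e_k^2 = -4$ and $e_i\cdot e_j = 4$ for $i\neq j$. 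Two consequences are worth isolating: every class of $\Lambda$ has self-intersection in $4\Zbb$, so $\Lambda$ has no $(-2)$-class, $W$ carries no $(-2)$-curve, and the ample cone is a full connected component $\mathcal{C}^+$ of the positive cone $\{x \in \Lambda_{\Rbb} : x^2 > 0\}$; and the relation $(e_i\cdot e_j)^2 = e_i^2\,e_j^2$ holds for all $i\neq j$.

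The lattice $\Lambda$ has signature $(1,2)$, so $O(\Lambda)$ acts on the hyperbolic plane $\Hbb^2 = \Pbb(\mathcal{C}^+)$ and each $s_{e_i}$ is the reflection in a geodesic line. The equality $(e_i\cdot e_j)^2 = e_i^2\,e_j^2$ means precisely that these three lines are pairwise tangent at the ideal boundary, so they bound an ideal triangle $\Delta$ with all three vertices at infinity. By Poincaré's polyhedron theorem (or, equivalently, a ping-pong argument on $\Hbb^2$) the group generated by the three side-reflections is discrete, has $\Delta$ as a fundamental domain, and admits the Coxeter presentation whose only relations are $s_{e_i}^2 = 1$; this is the universal Coxeter group on three generators, i.e. $\Zbb/2\Zbb * \Zbb/2\Zbb * \Zbb/2\Zbb$, and the geometric representation is faithful. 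Since $\Delta$ has finite hyperbolic area $\pi$ while $O(\Lambda)$ acts on $\Hbb^2$ as a discrete group of finite covolume, the subgroup $\langle \sigma_1^*, \sigma_2^*, \sigma_3^*\rangle$ has finite covolume and hence finite index in $O(\Lambda)$. This already proves the free-product assertion and the finite-index assertion for $\textup{Aut}(W)^*$, granting the reverse inclusion below.

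The remaining and, I expect, hardest point is the reverse inclusion $\textup{Aut}(W)^* \subseteq \langle \sigma_1^*,\sigma_2^*,\sigma_3^*\rangle$, which is where genericity must genuinely enter. By the global Torelli theorem $\textup{Aut}(W)^*$ is exactly the group of Hodge isometries of $\textup{H}^2(W,\Zbb)$ preserving the ample cone, read on $\Lambda$; the subtlety is that not every isometry of $\Lambda$ fixing $\mathcal{C}^+$ is realized by an automorphism. The cleanest illustration is that the coordinate permutations act on $\Lambda$ by permuting $D_1,D_2,D_3$ and preserve $\mathcal{C}^+$, yet for a very general $(2,2,2)$-form $F$ they are not symmetries of $W$ at all; so the index of $\langle\sigma_i^*\rangle$ in $O(\Lambda)$ is genuinely $> 1$, and the equality $\textup{Aut}(W) = \langle\sigma_1,\sigma_2,\sigma_3\rangle$ is a statement about which lattice isometries survive the Hodge-theoretic constraint. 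Concretely I would show that $\langle\sigma_i^*\rangle$ realizes the stabilizer of $\mathcal{C}^+$ up to the residual symmetries of the tessellation by copies of $\Delta$ — the coordinate permutations, together with the component-swap $-\,\textup{id}$ — and then use the very general Hodge structure, the same input that pinned down the transcendental action in the first step, to verify that none of these nontrivial residual symmetries is a Hodge isometry of $W$. Torelli then upgrades the finite-index reflection group to the full automorphism group, giving $\textup{Aut}(W) = \langle\sigma_1,\sigma_2,\sigma_3\rangle$.
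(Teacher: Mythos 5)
The paper itself offers no proof of this proposition: it is quoted verbatim from the literature (Billard, \cite[Proposition 1.3]{Bi97}, and Cantat--Dujardin, \cite[Proposition 3.3]{CD23}), so your reconstruction must be judged against those arguments, and in outline you follow the same route. Your computational backbone is correct: $\sigma_k^* D_k = 2D_i + 2D_j - D_k$ does follow from $\sigma_k^* D_i = D_i$, $\sigma_k^* D_j = D_j$ and isometry; $\sigma_k^*$ is the reflection in $e_k = D_k - D_i - D_j$ with $e_k^2 = -4$, $e_i \cdot e_j = 4$; every class $aD_1+bD_2+cD_3$ has square $4(ab+bc+ca)$, so there are no $(-2)$-classes and the ample cone is a full component $\mathcal{C}^+$ of the positive cone; and the three tangent geodesics give the ideal triangle group $\Zbb/2\Zbb * \Zbb/2\Zbb * \Zbb/2\Zbb$, of finite covolume and hence finite index in $O(\Lambda)$. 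However, your very first reduction contains an error: you exclude the possibility that an automorphism acts by $-\,\textup{id}$ on $T_W$ on the grounds that ``$-\,\textup{id}$ reverses the ample cone.'' It does not: $-\,\textup{id}$ on the transcendental lattice touches neither $\textup{NS}\,W$ nor the ample cone, and anti-symplectic automorphisms acting trivially on $\textup{NS}$ are a genuine phenomenon on K3 surfaces. The correct mechanism is arithmetic: $\textup{id}_{\textup{NS}} \oplus (-\,\textup{id}_{T})$ extends to an integral isometry of $\textup{H}^2(W,\Zbb)$ only if $-\,\textup{id} = \textup{id}$ on the discriminant group $A_{T_W} \cong A_{\Lambda}$, and since the Gram matrix has Smith normal form $\textup{diag}(2,2,4)$, one has $A_{\Lambda} \cong (\Zbb/2\Zbb)^2 \oplus \Zbb/4\Zbb$, which has an element of order $4$; so the glue fails and injectivity of $\textup{Aut}(W) \to O(\Lambda)$ is saved --- but by an argument you did not give.

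The second, larger gap is that the reverse inclusion $\textup{Aut}(W)^* \subseteq \langle \sigma_1^*, \sigma_2^*, \sigma_3^*\rangle$ --- which you yourself flag as the hardest point --- is only announced, never executed. You assert without computation that the stabilizer of $\mathcal{C}^+$ in $O(\Lambda)$ is the reflection group extended by the coordinate permutations and $-\,\textup{id}$; besides needing proof (a priori the stabilizer could contain further isometries, e.g.\ symmetries of the tessellation not defined over $\Zbb$ must be ruled out by an actual determination of $O(\Lambda)$), the list is wrong as stated, since $-\,\textup{id}$ swaps the two components of the positive cone and so does not stabilize the ample cone at all. And the promised Hodge-theoretic exclusion of the residual symmetries is left as a declaration: observing that permutations ``are not symmetries of $W$ for very general $F$'' does not show that no automorphism of $W$ \emph{induces} a permutation on $\textup{Pic}\,W$. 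The workable mechanism is again the discriminant group: by Torelli and the genericity of the transcendental Hodge structure, an isometry $\phi$ of $\Lambda$ preserving $\mathcal{C}^+$ is effective if and only if it induces $\pm\,\textup{id}$ on $A_{\Lambda}$ (to glue with $\pm\,\textup{id}$ on $T_W$), whereas the transposition of $D_1, D_2$ sends the order-$4$ class $D_1^{\vee} = \frac{1}{4}(-D_1+D_2+D_3)$ to $D_2^{\vee}$ with $D_1^{\vee} \mp D_2^{\vee} \notin \Lambda$, so it is not $\pm\,\textup{id}$ on $A_{\Lambda}$; one must then also check that the reflections $s_{e_i}$ \emph{do} act as $\pm\,\textup{id}$ on $A_{\Lambda}$, and that every element of the stabilizer of $\mathcal{C}^+$ satisfying the glue condition lies in the reflection group. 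None of these verifications appears in your proposal, so as it stands it establishes the finite-index reflection subgroup but not the equality $\textup{Aut}(W) = \langle \sigma_1, \sigma_2, \sigma_3\rangle$; for the completed argument along exactly these lines, see \cite{Bi97} and \cite[Section 3]{CD23}.
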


Besides the three involutions $\sigma_{1}, \sigma_{2}, \sigma_{3}$, depending on the symmetries of the defining polynomial $F$, the automorphism group of a Wehler surface $W$ may contain additional automorphisms. Typical examples include symmetry in $x, y, z$ that allows permutation of the coordinates, and power symmetry that allows the signs of two of $x, y, z$ to be reversed. For example, the original Markoff equation permits these extra automorphisms; and hereafter we consider analogous Markoff-type surfaces.

Note that all the above stated results are true for \emph{very general} Wehler surfaces. In \cite{Dao24b}, our examples of surfaces are very far from being general, which leads to many different results in the end. In this paper, our family of Wehler surfaces is \emph{more general} but not very general, and it gives us different geometric and arithmetic properties to study. For example, as we will see in Section 4.2, the (geometric) Picard number of each surface in this family is equal to $8$, which is less than $18$ -- a result obtained in \cite{Dao24b}.

\subsection{Markoff-type K3 surfaces}
Now let $K$ be a field. A Wehler surface $W$ over $K$ is then a surface 
$$ W = \{\overline{F} = 0\} \subset \Pbb^{1} \times \Pbb^{1} \times \Pbb^{1} $$
defined by a $(2,2,2)$-form 
$$ \overline{F}(x,r;y,s;z,t) \in K[x,r;y,s;z,t]. $$
Using the affine coordinates $(x,y,z)$, we let 
$$ F(x,y,z) = \overline{F}(x,1;y,1;z,1), $$
and then $W$ is the closure in $\Pbb^{1} \times \Pbb^{1} \times \Pbb^{1}$ of the affine surface, which by abuse of notation we also denote by 
$$ W : F(x,y,z) = 0. $$

Following \cite[Section 3]{FLST23}, we say that $W$ is \emph{non-degenerate} if it satisfies the following two conditions:
\begin{enumerate}
\item[(i)] The projection maps $\pi_{12}, \pi_{13}, \pi_{23}$ are \emph{finite}.
\item[(ii)] The generic fibers of the projection maps $\pi_{1}, \pi_{2}, \pi_{3}$ are (irreducible, geometrically connected) smooth curves, and thus the smooth fibers are curves of genus $1$, since they are $(2, 2)$ curves in $\Pbb^{1} \times \Pbb^{1}$, i.e., curves given by the vanishing of a $(2,2)$-form.
\end{enumerate}

By analogy with the classical Markoff equation, we say that $W$ is of \emph{Markoff type} (MK3) if it is symmetric in its three coordinates and invariant under double sign changes. An MK3 surface admits a group of automorphisms $\Gamma$ generated by the three involutions, coordinate permutations, and sign changes. Following the notations in \cite{FLST23}, we define:

\begin{definition}
We let $\mathfrak{S}_{3}$, the symmetric group on $3$ letters, act on $\Pbb^{1} \times \Pbb^{1} \times \Pbb^{1}$ by permuting the coordinates, and we let the group
$$ (\mu_{2}^{3})_{1} := \{(\alpha, \beta, \gamma) : \alpha, \beta, \gamma \in \mu_{2}\, \textup{and}\, \alpha\beta\gamma = 1\} $$
act on $\Pbb^{1} \times \Pbb^{1} \times \Pbb^{1}$ via sign changes,
$$ (\alpha, \beta, \gamma)(x, y, z) = (\alpha x, \beta y, \gamma z). $$
In this way, we obtain an embedding
$$ \mathcal{G} := (\mu_{2}^{3})_{1} \rtimes \mathfrak{S}_{3} \hookrightarrow \textup{Aut}(\Pbb^{1} \times \Pbb^{1} \times \Pbb^{1}). $$
\end{definition}

\begin{definition}
A \emph{Markoff-type K3} (MK3) surface $W$ is a Wehler surface whose $(2, 2, 2)$-form $F(x,y,z)$ is invariant under the action of $\mathcal{G}$, i.e., the $(2, 2, 2)$-form $F$ defining $W$ satisfies
\[
\begin{aligned}
F(x, y, z) &= F(-x, -y, z) = F(-x, y, -z) = F(x, -y, -z),\\
F(x, y, z) &= F(z, x, y) = F(y, z, x) = F(x, z, y) = F(y, x, z) = F(z, y, x).
\end{aligned}
\]
\end{definition}

By \cite[Proposition 6.5]{FLST23}, we have the following key result about the defining form of MK3 surfaces.

\begin{proposition}
Let $W/K$ be a (possibly degenerate) MK3 surface.
\begin{enumerate}
\item[(a)] There exist $a,b,c,d,e \in K$ so that the $(2, 2, 2)$-form $F$ that defines $W$ has the form
\begin{equation}
F(x, y, z) = ax^{2}y^{2}z^{2} + b(x^{2}y^{2} + x^{2}z^{2} + y^{2}z^{2}) + cxyz + d(x^{2} + y^{2} + z^{2}) + e = 0.
\end{equation}

\item[(b)] Let $F$ be as in (a). Then $W$ is a non-degenerate, i.e., the projections $\pi_{ij} : W \rightarrow \Pbb^{1} \times \Pbb^{1}$ are \emph{quasi-finite}, if and only if
$$ c \not= 0, \hspace{0.5cm} be \not= d^{2}, \hspace{0.5cm} \textup{and} \hspace{0.5cm} ad \not= b^{2}. $$
\end{enumerate}
\end{proposition}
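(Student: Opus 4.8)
**The plan is to prove each direction of the equivalence by translating the geometric non-degeneracy conditions (i) and (ii) from the preceding definition into algebraic conditions on the coefficients $a,b,c,d,e$.** Part (a) is essentially a representation-theoretic bookkeeping computation, so I will focus most effort on part (b). For (a), I would write down the general $(2,2,2)$-form as a linear combination of the $27$ monomials $x^{i}y^{j}z^{k}$ with $0 \le i,j,k \le 2$, and then impose the invariance relations from Definition 2.6 one at a time. The double-sign-change invariance $F(x,y,z) = F(-x,-y,z) = \cdots$ forces every monomial to have all three exponents of the same parity, killing all monomials except those in which $x,y,z$ each appear to an even power or all three appear to an odd power; since degrees are capped at $2$, the surviving monomials are $x^{2}y^{2}z^{2}$, the three degree-four monomials $x^{2}y^{2}$, $x^{2}z^{2}$, $y^{2}z^{2}$, the single odd monomial $xyz$, the three squares $x^{2},y^{2},z^{2}$, and the constant $1$. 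The $\mathfrak{S}_{3}$-invariance under coordinate permutations then forces the coefficients within each symmetric orbit to coincide, which collapses these exactly to the five free parameters $a,b,c,d,e$ displayed in the proposition.

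For part (b), the strategy is to analyze the fibers of the projection $\pi_{12}$ (the others follow by symmetry). I would substitute fixed values $x = x_{0}$, $y = y_{0}$ into $F$ and view the result as a quadratic in $z$ (after homogenizing in $[z:t]$); the projection $\pi_{12}$ fails to be quasi-finite precisely when there exist $(x_{0},y_{0})$ for which this quadratic vanishes identically in $z$, i.e. the whole fiber $\{x=x_0, y=y_0\} \times \Pbb^1$ lies on $W$. Collecting the coefficients of $z^{2}$, $z^{1}$, $z^{0}$ in $F$ gives three polynomials in $x_{0},y_{0}$, and the fiber is a full vertical line exactly when all three vanish simultaneously. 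So the task reduces to showing that this system of three conic-type equations in $(x_{0},y_{0})$ has a common solution (over $\overline{K}$, and also at the points at infinity $x_0 = \infty$ or $y_0 = \infty$) if and only if one of $c = 0$, $be = d^{2}$, or $ad = b^{2}$ holds.

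**I expect the main obstacle to be the careful handling of the points at infinity in $\Pbb^{1} \times \Pbb^{1}$ and the degenerate edge cases.** Working only in the affine chart will miss fibers over $[1:0]$, so I would homogenize fully and examine the behavior of the three coefficient-polynomials at the boundary, where the leading terms governed by $a$ and $b$ dominate; this is where the conditions $ad = b^{2}$ and $be = d^{2}$ naturally emerge, as they signal a common vanishing of the relevant resultants at or near infinity, while $c \neq 0$ rules out the degeneracy coming from the odd term $cxyz$. The cleanest route is probably to compute the resultant of the $z^{2}$- and $z^{0}$-coefficients (as polynomials in $z$, with coefficients depending on $x_0,y_0$) and show it factors through the quantities $be - d^{2}$, $ad - b^{2}$, and $c$; then one argues that a common zero of all three coefficient-polynomials exists iff this resultant vanishes identically, which happens iff one of the three algebraic conditions fails. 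I would double-check the forward direction (non-degeneracy $\Rightarrow$ the three conditions) by exhibiting, for each failed condition, an explicit bad fiber, and the reverse direction by showing the three conditions together force every fiber of every $\pi_{ij}$ to be finite.
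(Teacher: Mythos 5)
The paper itself contains no proof of this statement: it is quoted directly from \cite[Proposition 6.5]{FLST23} (Proposition 2.3 in the paper), so your argument can only be measured against the statement, not against an in-paper proof. Your outline is essentially the correct and standard verification. Part (a) is right: invariance under $(x,y,z)\mapsto(-x,-y,z)$ and its two conjugates forces all three exponents of a surviving monomial to have the same parity, leaving exactly $x^{2}y^{2}z^{2}$, the $\mathfrak{S}_{3}$-orbit of $x^{2}y^{2}$, the monomial $xyz$, the orbit of $x^{2}$, and $1$; permutation invariance then equates coefficients within each orbit, giving the five parameters. For (b), your reduction is the right one: over a point of $\Pbb^{1}\times\Pbb^{1}$ the fiber of $\pi_{12}$ is the zero locus of a binary quadratic in $[z:t]$, which is finite unless the quadratic vanishes identically, so quasi-finiteness is equivalent to the three coefficient $(2,2)$-forms $A_{2}=ax^{2}y^{2}+b(x^{2}s^{2}+r^{2}y^{2})+dr^{2}s^{2}$, $A_{1}=cxyrs$, $A_{0}=bx^{2}y^{2}+d(x^{2}s^{2}+r^{2}y^{2})+er^{2}s^{2}$ having no common zero on $\Pbb^{1}\times\Pbb^{1}$ over $\overline{K}$. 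Your emphasis on homogenizing and checking the boundary is exactly where the content lies: when $c\neq 0$ any common zero must lie on $\{xyrs=0\}$, and restricting to $x=0$ (resp.\ $r=0$) turns $A_{2},A_{0}$ into the pair $bu+d,\ du+e$ (resp.\ $au+b,\ bu+d$) in $u=[y^{2}:s^{2}]$, whose resultants are precisely $be-d^{2}$ and $ad-b^{2}$; the loci $y=0$ and $s=0$ add nothing by symmetry. (One uses here that every point of $\Pbb^{1}(\overline{K})$ has the form $[y^{2}:s^{2}]$, and that the case $b=d=0$, where both restricted forms degenerate, still yields a common zero, consistently with $be=d^{2}$.)

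Two soft spots need repair. First, your sentence proposing ``the resultant of the $z^{2}$- and $z^{0}$-coefficients (as polynomials in $z$, with coefficients depending on $x_{0},y_{0}$)'' is garbled: those coefficients are forms in $(x,r;y,s)$ and do not involve $z$ at all; what is actually needed is either elimination in $(x_{0},y_{0})$ or, more cleanly, the boundary case analysis you already sketched. Second, the necessity of $c\neq 0$ is not discharged by ``exhibiting an explicit bad fiber for each failed condition,'' because when $c=0$ there is no single obvious fiber: you must prove that $A_{2}$ and $A_{0}$ always share a zero. This does hold, and one clean way to see it is that in the variables $u=[x^{2}:r^{2}]$, $v=[y^{2}:s^{2}]$ both forms become $(1,1)$-forms on $\Pbb^{1}\times\Pbb^{1}$, and eliminating $u$ gives $(ad-b^{2})v^{2}+(ae-bd)v+(be-d^{2})=0$, which always has a solution in $\Pbb^{1}(\overline{K})$ (even when coefficients vanish); equivalently, any two curves of positive bidegree on $\Pbb^{1}\times\Pbb^{1}$ meet, since their intersection number is positive. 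With these two patches, your plan yields a complete and correct proof of both directions.
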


\begin{remark}
We can recover the original Markoff equation for a surface $S_{k}$ as a special case of a form $F$ with $a=b=0, c=-1, d=1, e=-k$. More precisely, $S_{k}$ is given by the affine equation
$$ F(x,y,z) = x^{2} + y^{2} + z^{2} - xyz - k = 0. $$
However, we note that the Markoff equation is degenerate, despite the involutions being well-defined on the affine Markoff surface $S_{k}$. This occurs because the involutions are not well-defined at some of the points at infinity in the projective closure of $S_{k}$ in $\Pbb^{1} \times \Pbb^{1} \times \Pbb^{1}$; for example, the inverse image $\pi_{12}^{-1}([1:0], [1:0])$ in $X_{k}$ is a line isomorphic to $\Pbb^{1}$.
\end{remark}

\subsection{Brauer groups and the Brauer--Manin obstruction}
\subsubsection{Brauer groups}
Now let $k$ be an arbitrary field. Recall that for a variety $X$ over $k$ there is a natural filtration on the Brauer group
$$ \textup{Br}_{0}\,X \subset \textup{Br}_{1}\,X \subset \textup{Br}\,X $$ which is defined as
$$ \textup{Br}_{0}\,X = \textup{Im}[\textup{Br}\,k \rightarrow \textup{Br}\,X], \hspace{0.5cm} \textup{Br}_{1}\,X = \textup{Ker}[\textup{Br}\,X \rightarrow \textup{Br}\,\overline{X}]. $$
Let $X$ be a variety over a field $k$ such that $\overline{k}[X]^{\times} = \overline{k}^{\times}$. By Hilbert’s Theorem 90 we have $\textup{H}^{1}(k, \overline{k}^{\times}) = 0$, then by the Hochschild--Serre spectral sequence, there is a functorial exact sequence
\begin{equation}
\begin{aligned}
    0 &\longrightarrow \textup{Pic}\,X \longrightarrow \textup{Pic}\,\overline{X}^{G_{k}} \longrightarrow \textup{Br}\,k \longrightarrow \textup{Br}_{1}\,X \\
	&\longrightarrow \textup{H}^{1}(k, \textup{Pic}\,\overline{X}) \longrightarrow \textup{Ker}[\textup{H}^{3}(k, \overline{k}^{\times}) \rightarrow \textup{H}^{3}_{\textup{\'et}}(X,\Gmbb)].
\end{aligned}
\end{equation}

\begin{remark}
Let $X$ be a variety over a field $k$ such that $\overline{k}[X]^{\times} = \overline{k}^{\times}$. This assumption $\overline{k}[X]^{\times} = \overline{k}^{\times}$ holds for any proper, geometrically connected and geometrically reduced $k$-variety $X$.
\begin{enumerate} 
\item[(1)] If $X$ has a $k$-point, then each of the maps $\textup{Br}\,k \longrightarrow \textup{Br}_{1}\,X$ and $\textup{H}^{3}(k, \overline{k}^{\times}) \rightarrow \textup{H}^{3}_{\textup{\'et}} (X, \Gmbb)$ is injective. (Then $\textup{Pic}\,X \longrightarrow \textup{Pic}\,\overline{X}^{G_{k}}$ is an isomorphism.) Therefore, we have an isomorphism
$$ \textup{Br}_{1}\,X/\textup{Br}\,k \cong \textup{H}^{1}(k, \textup{Pic}\,\overline{X}). $$ 

\item[(2)] If $k$ is a number field, then $\textup{H}^{3}(k, \overline{k}^{\times}) = 0$ from class field theory. Therefore, we have an isomorphism 
$$ \textup{Br}_{1}\,X/\textup{Br}_{0}\,X \cong \textup{H}^{1}(k, \textup{Pic}\,\overline{X}). $$
\end{enumerate}
\end{remark}

We have the following result (see \cite[Theorem 5.5.1]{CTS21}).
\begin{theorem}
Let $X$ be a smooth, projective and geometrically integral variety over a field $k$. Assume that $\textup{H}^{1}(X,\mathcal{O}_{X}) = 0$ and $\textup{NS}\,\overline{X}$ is torsion-free. Then $\textup{H}^{1}(k, \textup{Pic}\,\overline{X})$ and $\textup{Br}_{1}\,X/\textup{Br}_{0}\,X$ are finite groups.
\end{theorem}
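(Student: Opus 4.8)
The plan is to reduce both assertions to the single finiteness statement for $\textup{H}^{1}(k, \textup{Pic}\,\overline{X})$, and then to establish the latter by determining the structure of $\textup{Pic}\,\overline{X}$ as a Galois module. First, since $X$ is smooth, projective and geometrically integral, it is in particular proper, geometrically connected and geometrically reduced, so $\overline{k}[X]^{\times} = \overline{k}^{\times}$ and the exact sequence (3) is available. Reading off the segment $\textup{Br}\,k \to \textup{Br}_{1}\,X \to \textup{H}^{1}(k, \textup{Pic}\,\overline{X})$ and using the definition $\textup{Br}_{0}\,X = \textup{Im}[\textup{Br}\,k \to \textup{Br}\,X]$, one obtains an injection $\textup{Br}_{1}\,X/\textup{Br}_{0}\,X \hookrightarrow \textup{H}^{1}(k, \textup{Pic}\,\overline{X})$. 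Thus it suffices to prove that $\textup{H}^{1}(k, \textup{Pic}\,\overline{X})$ is finite.

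Next I would pin down $\textup{Pic}\,\overline{X}$ as an abelian group. Over the algebraically closed field $\overline{k}$ there is the canonical exact sequence $0 \to \textup{Pic}^{0}\,\overline{X} \to \textup{Pic}\,\overline{X} \to \textup{NS}\,\overline{X} \to 0$, in which $\textup{NS}\,\overline{X}$ is finitely generated by the theorem of the base. The hypothesis $\textup{H}^{1}(X,\mathcal{O}_{X}) = 0$ enters here: since $\textup{H}^{1}(X,\mathcal{O}_{X})$ is the tangent space at the origin of the Picard scheme, the reduced identity component is an abelian variety of dimension at most $\dim_{k}\textup{H}^{1}(X,\mathcal{O}_{X}) = 0$, hence trivial, so $\textup{Pic}^{0}\,\overline{X}$ has no nonzero $\overline{k}$-points and $\textup{Pic}\,\overline{X} \cong \textup{NS}\,\overline{X}$. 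Together with the hypothesis that $\textup{NS}\,\overline{X}$ is torsion-free, this identifies $\textup{Pic}\,\overline{X}$ with a free abelian group $\Zbb^{n}$ of finite rank, carrying a continuous $G_{k}$-action.

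Finally, because $\textup{GL}_{n}(\Zbb) = \textup{Aut}(\Zbb^{n})$ is discrete, the continuous action of $G_{k}$ has open kernel, so it factors through $G = \textup{Gal}(K/k)$ for some finite Galois extension $K/k$. The inflation--restriction sequence gives $0 \to \textup{H}^{1}(G, \Zbb^{n}) \to \textup{H}^{1}(k, \textup{Pic}\,\overline{X}) \to \textup{H}^{1}(G_{K}, \Zbb^{n})$, and the right-hand term vanishes since $G_{K}$ acts trivially while $\textup{Hom}_{\textup{cont}}(G_{K}, \Zbb^{n}) = 0$ (a continuous homomorphism from a profinite group to a torsion-free discrete group is zero). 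Hence $\textup{H}^{1}(k, \textup{Pic}\,\overline{X}) \cong \textup{H}^{1}(G, \Zbb^{n})$, which is finite because it is finitely generated and annihilated by $|G|$; this proves the theorem.

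I expect the delicate step to be the implication $\textup{H}^{1}(X,\mathcal{O}_{X}) = 0 \Rightarrow \textup{Pic}^{0}\,\overline{X} = 0$. In characteristic zero it is immediate from the smoothness of the Picard scheme, but in positive characteristic the Picard scheme may be non-reduced, and one must argue via its reduced subscheme --- an abelian variety whose dimension is controlled by $\dim_{k}\textup{H}^{1}(X,\mathcal{O}_{X})$ --- together with the fact that over $\overline{k}$ the group of $\overline{k}$-points is insensitive to nilpotents. The remaining steps are formal, relying on the theorem of the base and on the standard finiteness of the cohomology of a finite group with finitely generated coefficients in positive degree.
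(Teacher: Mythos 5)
Your proof is correct, and it is essentially the standard argument: the paper does not prove this theorem itself but cites \cite[Theorem 5.5.1]{CTS21}, whose proof proceeds exactly as you do (use $\textup{H}^{1}(X,\mathcal{O}_{X})=0$ to kill $\textup{Pic}^{0}$ so that $\textup{Pic}\,\overline{X}\cong\textup{NS}\,\overline{X}\cong\Zbb^{n}$, factor the Galois action through a finite quotient $G$, conclude via inflation--restriction that $\textup{H}^{1}(k,\textup{Pic}\,\overline{X})\cong\textup{H}^{1}(G,\Zbb^{n})$ is finite, and embed $\textup{Br}_{1}\,X/\textup{Br}_{0}\,X$ into it by the Hochschild--Serre sequence). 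Your attention to the positive-characteristic subtlety (non-reducedness of the Picard scheme, handled via the reduced identity component and insensitivity of $\overline{k}$-points to nilpotents) is exactly the right delicate point.
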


The assumption of the above theorem is always true if $X$ is a K3 surface. Furthermore, by Skorobogatov and Zarhin, we have a stronger result for the Brauer group of K3 surfaces (see \cite[Theorem 16.7.2 and Collorary 16.7.3]{CTS21}).
\begin{theorem}
Let $X$ be a K3 surface over a field $k$ finitely generated over $\Qbb$. Then $(\textup{Br}\,\overline{X})^{G_{k}}$ is finite. Moreover, the group $\textup{Br}\,X/\textup{Br}_{0}\,X$ is finite.
\end{theorem}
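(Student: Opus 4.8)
The plan is to deduce the second assertion from the first, so that all the content lies in the finiteness of $(\textup{Br}\,\overline{X})^{G_{k}}$. Using the filtration $\textup{Br}_{0}\,X \subset \textup{Br}_{1}\,X \subset \textup{Br}\,X$, finiteness of $\textup{Br}\,X/\textup{Br}_{0}\,X$ follows from finiteness of the two successive quotients. The algebraic quotient $\textup{Br}_{1}\,X/\textup{Br}_{0}\,X$ is already finite: a K3 surface has $\textup{H}^{1}(X,\mathcal{O}_{X}) = 0$ and torsion-free $\textup{NS}\,\overline{X}$, so the preceding theorem applies directly. For the transcendental quotient, since $\textup{Br}_{1}\,X = \textup{Ker}[\textup{Br}\,X \rightarrow \textup{Br}\,\overline{X}]$ and the image of $\textup{Br}\,X$ in $\textup{Br}\,\overline{X}$ consists of $G_{k}$-fixed classes, there is an injection $\textup{Br}\,X/\textup{Br}_{1}\,X \hookrightarrow (\textup{Br}\,\overline{X})^{G_{k}}$. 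Hence the theorem reduces to proving that $(\textup{Br}\,\overline{X})^{G_{k}}$ is finite.

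To prove this I would argue one prime at a time. As $\textup{Br}\,\overline{X}$ is torsion it decomposes as $\bigoplus_{\ell} \textup{Br}\,\overline{X}\{\ell\}$, and finiteness of $(\textup{Br}\,\overline{X})^{G_{k}}$ is equivalent to (a) finiteness of $(\textup{Br}\,\overline{X}\{\ell\})^{G_{k}}$ for each $\ell$, together with (b) its vanishing for all but finitely many $\ell$. The Kummer sequence on $\overline{X}$ gives, for each $\ell$, the exact sequence
$$ 0 \rightarrow \textup{NS}\,\overline{X} \otimes \Qbb_{\ell}/\Zbb_{\ell} \rightarrow \textup{H}^{2}_{\textup{\'et}}(\overline{X}, \Qbb_{\ell}/\Zbb_{\ell}(1)) \rightarrow \textup{Br}\,\overline{X}\{\ell\} \rightarrow 0, $$
which exhibits $\textup{Br}\,\overline{X}\{\ell\} \cong (\Qbb_{\ell}/\Zbb_{\ell})^{22-\rho}$, where $\rho$ is the geometric Picard number. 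Since a subgroup of $(\Qbb_{\ell}/\Zbb_{\ell})^{22-\rho}$ is finite exactly when it contains no divisible part, taking $G_{k}$-cohomology of this sequence shows that $(\textup{Br}\,\overline{X}\{\ell\})^{G_{k}}$ is finite if and only if the transcendental rational representation $V_{\ell} = T_{\ell} \otimes \Qbb_{\ell}$, where $T_{\ell} \subset \textup{H}^{2}_{\textup{\'et}}(\overline{X}, \Zbb_{\ell}(1))$ is the transcendental quotient, has no nonzero $G_{k}$-fixed vector.

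The substantive input, and the step I expect to be the main obstacle, is controlling this transcendental representation uniformly in $\ell$; there is no elementary bypass, as the statement genuinely encodes a form of the Tate conjecture for divisors. Here I would invoke the Kuga--Satake construction. After replacing $k$ by a finite extension $k'$ — harmless, since $(\textup{Br}\,\overline{X})^{G_{k}} \subseteq (\textup{Br}\,\overline{X})^{G_{k'}}$, so finiteness over $k'$ implies it over $k$ — one obtains a Galois-equivariant embedding of the transcendental part of $\textup{H}^{2}(\overline{X})(1)$ into the cohomology $\textup{H}^{1}(\overline{A})^{\otimes 2}$ of the Kuga--Satake abelian variety $A$. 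This reduces the problem to the analogous statement that $(\textup{Br}\,\overline{A})^{G_{k}}$ is finite for abelian varieties over a field finitely generated over $\Qbb$, which follows from Faltings' theorems: the semisimplicity of $T_{\ell}A \otimes \Qbb_{\ell}$, the Tate isomorphism $\textup{End}(A) \otimes \Zbb_{\ell} \cong \textup{End}_{G_{k}}(T_{\ell}A)$, and the finite generation of $\textup{End}(A)$ together force the $G_{k}$-fixed divisible part to vanish, and to vanish identically for almost all $\ell$. This is precisely the mechanism of Skorobogatov--Zarhin.

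Finally, assembling the pieces: finiteness of $(\textup{Br}\,\overline{X})^{G_{k}}$ combined with the already-established finiteness of $\textup{Br}_{1}\,X/\textup{Br}_{0}\,X$ yields finiteness of $\textup{Br}\,X/\textup{Br}_{0}\,X$, which proves both assertions.
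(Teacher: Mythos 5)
Your proposal is correct and follows exactly the route the paper relies on: the paper does not prove this theorem itself but cites \cite[Theorem 16.7.2 and Corollary 16.7.3]{CTS21}, i.e.\ the Skorobogatov--Zarhin finiteness theorem, whose proof is precisely your reduction (filtration plus $\textup{Br}\,X/\textup{Br}_{1}\,X \hookrightarrow (\textup{Br}\,\overline{X})^{G_{k}}$, the Kummer-sequence identification $\textup{Br}\,\overline{X}\{\ell\} \cong (\Qbb_{\ell}/\Zbb_{\ell})^{22-\rho}$, and the Kuga--Satake passage to abelian varieties where Faltings' semisimplicity and Tate-type theorems over finitely generated fields give the uniform-in-$\ell$ vanishing). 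You also correctly identify the one genuinely hard step (uniform control of the transcendental representation for almost all $\ell$) and attribute the mechanism to Skorobogatov--Zarhin, so there is nothing to correct.
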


\subsubsection{The Brauer--Manin obstruction}
Here we briefly recall how the Brauer--Manin obstruction works in our setting, following \cite[Section 8.2]{Poo17} and \cite[Section 1]{CTX09}. For each place $v$ of $\Qbb$ there is a pairing
$$ U(\Qbb_{v}) \times \textup{Br}\,U \rightarrow \Qbb/\Zbb $$
coming from the local invariant map 
$$ \textup{inv}_{v} : \textup{Br}\,\Qbb_{v} \rightarrow \Qbb/\Zbb $$ from local class field theory (this is an isomorphism if $v$ is a prime number). This pairing is locally constant on the left by \cite[Proposition 8.2.9]{Poo17}. Any element $\alpha \in \textup{Br}\,U$ pairs trivially on $\mathcal{U}(\Zpbb)$ for almost all primes $p$, so we obtain a pairing $U(\textbf{\textup{A}}_{\Qbb}) \times \textup{Br}\,U \rightarrow \Qbb/\Zbb$. For $B \subseteq \textup{Br}\,U$, let $U(\textbf{\textup{A}}_{\Qbb})^{B}$ be the left kernel with respect to $B$, and let $U(\textbf{\textup{A}}_{\Qbb})^{\textup{Br}} = \mathcal{U}(\textbf{\textup{A}}_{\Qbb})^{\textup{Br}\,U}$. By \cite[Corollary 8.2.6]{Poo17}, we have the inclusions $U(\Qbb) \subseteq U(\textbf{\textup{A}}_{\Qbb})^{B} \subseteq \mathcal{U}(\textbf{\textup{A}}_{\Qbb})$.

For \textbf{integral points}, as the local pairings are locally constant, we obtain a well-defined pairing 
$$ \mathcal{U}(\textbf{\textup{A}}_{\Zbb})_{\bullet} \times \textup{Br}\,U \rightarrow \Qbb/\Zbb. $$
For $B \subseteq \textup{Br}\,U$, let $\mathcal{U}(\textbf{\textup{A}}_{\Zbb})_{\bullet}^{B}$ be the left kernel with respect to $B$, and let $\mathcal{U}(\textbf{\textup{A}}_{\Zbb})_{\bullet}^{\textup{Br}} = \mathcal{U}(\textbf{\textup{A}}_{\Zbb})_{\bullet}^{\textup{Br}\,U}$. By abuse of notation, from now on we write the reduced Brauer--Manin set $\mathcal{U}(\textbf{\textup{A}}_{\Zbb})_{\bullet}^{B}$ in the standard way as $\mathcal{U}(\textbf{\textup{A}}_{\Zbb})^{B}$. We have the inclusions $\mathcal{U}(\Zbb) \subseteq \mathcal{U}(\textbf{\textup{A}}_{\Zbb})^{B} \subseteq \mathcal{U}(\textbf{\textup{A}}_{\Zbb})$, so that $B$ can obstruct the \textit{integral Hasse principle} or \textit{strong approximation} on $\mathcal{U}$.

Let $V$ be dense Zariski open in $U$. As $U$ is smooth, the set $V(\Qpbb)$ is dense in $U(\Qpbb)$ for all places $p$. Moreover, $\mathcal{U}(\Zpbb)$ is open in $U(\Qpbb)$, hence $V(\Qpbb) \cap \mathcal{U}(\Zpbb)$ is dense in $\mathcal{U}(\Zpbb)$. As the local pairings are locally constant, we may restrict our attention to $V$ to calculate the local invariants of a given element in $\textup{Br}\,U$.

\section{Rational points on MK3 surfaces}
In this section, we study the existence of rational points on Markoff-type K3 surfaces. For Markoff surfaces over $\Qbb$, we know from \cite{Kol02}, \cite{LM20} and \cite{CTWX20} that there are always rational points on smooth affine Markoff surfaces; moreover, the set of rational points is Zariski-dense. However, such a phenomenon does not happen for smooth affine MK3 surfaces, since their projective closures are elliptic surfaces and lie in $(\Pbb^{1})^{3}$ instead of $\Pbb^{3}$. A natural idea to produce rational points is to construct an equation of MK3 surfaces such that there exists a fiber of $\pi_{i}$ ($i \in \{1,2,3\}$) which is an elliptic curve containing infinitely many rational points.

With this idea, we have the following result.

\begin{proposition}
For $A, B, C \in \Zbb \setminus \{0\}$ and $m \in \Zbb \setminus \{0,1\}$, let $W \subset \Pbb^{1} \times \Pbb^{1} \times \Pbb^{1}$ be the MK3 surface defined over $\Qbb$ by the $(2,2,2)$-form 
\begin{equation}
    F(x,y,z) = (x^{2}-A^{2})(y^{2}-A^{2})(z^{2}-A^{2}) - m(xyz+C)^{2} - B = 0.
\end{equation}
Let $U \subset W$ be the affine open subvariety defined by $\{rst \not= 0\}$ over $\Qbb$ by the same equation. Let $W_{0}$ be the fiber $\pi_{3}^{-1}([z:t] = [1:0])$. Then there exists $m \in \Zbb \setminus \{0,1\}$ such that there are infinitely many $\Qbb$-points on $W_{0}$ for any $A,B,C \in \Zbb \setminus \{0\}$.
\end{proposition}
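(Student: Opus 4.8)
The plan is to identify the fiber $W_0$ explicitly as a $(2,2)$-curve in $\Pbb^1 \times \Pbb^1$, observe that it is independent of $B$ and $C$ and depends on $A$ only through a harmless scaling, and thereby reduce the whole statement to producing a \emph{single} value of $m$ for which one fixed genus-$1$ curve has positive Mordell--Weil rank.

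First I would compute the fiber. Grouping the defining form by powers of $z$,
\begin{equation*}
\begin{aligned}
F(x,y,z) = {}& \big[(x^2-A^2)(y^2-A^2) - m x^2 y^2\big] z^2 - 2mC\,xy\,z \\
{}& - \big[A^2(x^2-A^2)(y^2-A^2) + mC^2 + B\big],
\end{aligned}
\end{equation*}
and homogenizing the third coordinate as $[z:t]$, the point $[z:t]=[1:0]$ corresponds to $t=0$, so only the leading coefficient survives. Hence $W_0 = \pi_3^{-1}([1:0])$ is the curve in $\Pbb^1 \times \Pbb^1$ cut out by
\begin{equation*}
G(x,y) := (x^2-A^2)(y^2-A^2) - m x^2 y^2 = (1-m)x^2y^2 - A^2(x^2+y^2) + A^4 = 0,
\end{equation*}
which in particular involves neither $B$ nor $C$.

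Next I would exploit homogeneity. Since $G$ is homogeneous of degree $4$ in $(x,y,A)$, the substitution $x = AX$, $y = AY$ gives $G(AX,AY) = A^4\,\widetilde G(X,Y)$ with
\begin{equation*}
\widetilde G(X,Y) = (1-m)X^2Y^2 - X^2 - Y^2 + 1.
\end{equation*}
For any $A \in \Qbb^\times$ this is a $\Qbb$-isomorphism, so the affine $\Qbb$-points of $W_0$ are in bijection with those of the fixed curve $\{\widetilde G = 0\}$, which is entirely independent of $A,B,C$. Thus it suffices to find one $m \in \Zbb \setminus\{0,1\}$ for which $\{\widetilde G = 0\}$ has infinitely many $\Qbb$-points; that $m$ then works simultaneously for every $A,B,C \in \Zbb\setminus\{0\}$. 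To see that $\{\widetilde G = 0\}$ is genus $1$, I would solve $Y^2 = (X^2-1)/\big((1-m)X^2 - 1\big)$ and clear denominators to obtain the quartic model
\begin{equation*}
V^2 = (1-m)X^4 - (2-m)X^2 + 1 = (1-m)(X^2-1)\Big(X^2 - \tfrac{1}{1-m}\Big),
\end{equation*}
whose four branch points $\pm 1, \pm(1-m)^{-1/2}$ are distinct precisely when $m \notin \{0,1\}$. Hence for every admissible $m$ the curve is smooth of genus $1$, and the rational point $(X,V) = (0,1)$ makes it an elliptic curve $E_m$; a standard transformation then puts $E_m$ in Weierstrass form.

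The main obstacle is the final step: choosing $m$ so that $E_m(\Qbb)$ is infinite, i.e.\ has positive rank, since the evident points $(\pm 1,0),(0,\pm 1)$ together with the points at infinity are all torsion. I would exhibit a non-obvious rational point for a concrete $m$ --- for instance $m=-6$, where $(X,Y)=(2,\tfrac13)$ lies on $\{\widetilde G = 0\}$ --- and verify that it has infinite order, either by a $2$-descent on $E_{-6}$ or by reducing modulo a couple of primes of good reduction to bound the torsion subgroup and checking that the order of the reduced point exceeds that bound (equivalently, by a direct Mordell--Weil rank computation in a computer algebra system). Positive rank yields infinitely many $\Qbb$-points on $E_m$, hence on $\{\widetilde G = 0\}$, and finally, via the scaling bijection $(x,y)=(AX,AY)$, infinitely many $\Qbb$-points on $W_0$ for every $A,B,C \in \Zbb\setminus\{0\}$.
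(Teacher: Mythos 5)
Your proposal is correct and follows essentially the same route as the paper: compute the fiber $W_{0}$, observe that after the scaling $x = AX$, $y = AY$ it becomes the Edwards-form curve $(1-m)X^{2}Y^{2} + 1 = X^{2} + Y^{2}$ (so $B$, $C$, and $A$ all drop out, which is exactly what makes a single $m$ work for all $A,B,C$), pass to a Weierstrass/quartic model, and exhibit a rational point of infinite order. The differences are minor: the paper invokes the Edwards-to-Weierstrass birational maps from \cite{DN17} rather than your direct quartic model $V^{2} = (1-m)X^{4} - (2-m)X^{2} + 1$ (your explicit smoothness check for $m \notin \{0,1\}$ is a small bonus the paper glosses over), and it works with the parametric family $m = 3\ell(1-\ell)$ --- which it needs later, since Section 5 uses $m_{0} = -468$, the case $\ell = -12$ --- exhibiting a point whose non-integral coordinates make Nagell--Lutz apply immediately, whereas you fix the single value $m = -6$, which suffices for the proposition as stated. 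The one step you leave unexecuted, that $(X,Y) = (2,\tfrac{1}{3})$ has infinite order, does close, and by exactly the paper's mechanism rather than needing a descent or machine computation: under the Edwards maps it goes to $(u,v) = (2,2)$, i.e.\ $(U,V) = (-12,-12)$ on $V^{2} = U^{3} + 16U^{2} + 36U$, and doubling gives slope $\lambda = -\tfrac{7}{2}$ and $U(2P) = \lambda^{2} - 16 + 24 = \tfrac{81}{4} \notin \Zbb$, so $2P$ (hence $P$) is non-torsion by Nagell--Lutz.
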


\begin{proof}
For $[z:t] = [1:0]$, we obtain the affine equation for $W_{0}$:
$$ (x^{2} - A^{2})(y^{2} - A^{2}) - mx^{2}y^{2} = 0. $$
Equivalently, we have
$$ (1-m)\left(\frac{x}{A}\right)^{2}\left(\frac{y}{A}\right)^{2} + 1 = \left(\frac{x}{A}\right)^{2} + \left(\frac{y}{A}\right)^{2}. $$
This is an Edwards normal form for an elliptic curve, and from \cite{DN17}, the curve $W_{0}$ is birationally equivalent to an elliptic curve $E$ with the Weierstrass normal form
$$ \frac{v^{2}}{m} = u^{3} + \frac{4-2m}{m}u^{2} + u $$
via the rational maps 
\[
\begin{aligned}
(x,y) \mapsto (u,v) &= \left(\frac{1+y}{1-y}, \frac{2(1+y)}{x(1-y)}\right),\\
(u,v) \mapsto (x,y) &= \left(\frac{2u}{v}, \frac{u-1}{u+1}\right).
\end{aligned}
\]
The above Weierstrass form is equivalent to 
$$ (mv)^{2} = (mu)^{3} + (4-2m)(mu)^{2} + m^{2}(mu). $$ 
By letting $V = mv$ and $U = mu$, we obtain
$$ V^{2} = U^{3} + (4-2m)U^{2} + m^{2}U = U((U+2-m)^{2} - 4(1-m)). $$

Now let us consider $m = 3\ell(1-\ell)$ for any $\ell \in \Zbb \setminus \{0,1\}$. Then $E$ is a smooth elliptic curve and contains a rational point $(U,V)$ where
\[ 
\begin{aligned}
&U = \frac{16(1-m)+1}{4} - (2-m) = \frac{3(3-4m)}{4} = \frac{9}{4}(2\ell-1)^{2},\\
&V = \frac{3}{2}(2\ell - 1)\frac{16(1-m)-1}{4} = \frac{9}{8}(2\ell-1)(16\ell^{2}-16\ell+5).
\end{aligned}
\]
By the Nagell--Lutz theorem, $(U,V)$ is a rational point of infinite order. Therefore, there are infinitely many $\Qbb$-points on $E$. The same also holds for $W_{0}$ by birational equivalence.
\end{proof}

We then deduce the following result.
\begin{proposition}
With the same notation as in Proposition 3.1, assume further that $-mB$ is not a square in $\Zbb$. Then rational points on $W$ are Zariski-dense.
\end{proposition}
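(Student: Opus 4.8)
The plan is to propagate the infinitely many rational points produced on the single fiber $W_{0}$ across all of $W$ by means of the automorphism group, and to certify that this propagation fills up the surface by passing to the Néron--Severi lattice.

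First I would record that the three Wehler involutions $\sigma_{1},\sigma_{2},\sigma_{3}$ (and hence the group $\Gamma \supseteq \langle \sigma_{1},\sigma_{2},\sigma_{3}\rangle$ that they generate) are automorphisms of $W$ defined over $\Qbb$: each $\sigma_{k}$ is the fiberwise swap of the two points of the degree-two map $\pi_{ij}$, so it is given by $z_{k}\mapsto (\text{sum of the two roots}) - z_{k}$, a formula with rational coefficients. Consequently $\Gamma$ acts on the set $W(\Qbb)$. By Proposition 3.1 the fiber $W_{0}=\pi_{3}^{-1}([1:0])$ is an elliptic curve with $\#W_{0}(\Qbb)=\infty$, and therefore for every $g\in\Gamma$ the curve $g(W_{0})$ again carries infinitely many rational points. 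I would then reduce the proposition to a single combinatorial claim. The Zariski closure $Z:=\overline{W(\Qbb)}$ is $\Gamma$-invariant and contains $W_{0}$; if $Z\neq \overline{W}$, then, $\overline{W}$ being an integral surface, $Z$ is at most one-dimensional, hence a finite union of irreducible curves, which can contain only finitely many of the distinct curves $g(W_{0})$. So it suffices to prove that the orbit $\{\, g(W_{0}) : g\in\Gamma \,\}$ consists of infinitely many distinct curves.

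To establish this I would work with numerical classes. The fiber class is $[W_{0}]=D_{3}$ inside the rank-three sublattice $\langle D_{1},D_{2},D_{3}\rangle \subset \textup{NS}\,\overline{W}$, whose intersection form satisfies $D_{i}^{2}=0$ and $(D_{i}\cdot D_{j})=2$ for $i\neq j$. Since $\sigma_{k}$ is the deck involution of the double cover $\pi_{ij}$, it fixes $D_{i}$ and $D_{j}$ and acts on the remaining generator by the reflection $\sigma_{k}^{*}D_{k}=-D_{k}+2D_{i}+2D_{j}$. A direct computation then gives, for instance, $\sigma_{3}^{*}D_{3}=2D_{1}+2D_{2}-D_{3}$, $\sigma_{1}^{*}\sigma_{3}^{*}D_{3}=-2D_{1}+6D_{2}+3D_{3}$, and so on, with coordinates growing without bound; thus the $\Gamma$-orbit of $D_{3}$ is infinite. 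Distinct classes force distinct curves, so $\{g(W_{0})\}$ contains infinitely many distinct irreducible curves, each with infinitely many rational points. Their union cannot lie in any one-dimensional closed subset, whence $Z=\overline{W}$ and $W(\Qbb)$ is Zariski-dense.

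The delicate point, and the place where the hypothesis that $-mB$ is not a square should enter, is the standing requirement that $W$ be a smooth, geometrically integral K3 surface (or that its minimal model be one), so that $\sigma_{1},\sigma_{2},\sigma_{3}$ are honest automorphisms acting on $\textup{NS}\,\overline{W}$ by the reflections above, and so that $W_{0}$ is a smooth irreducible fiber whose translates $g(W_{0})$ stay reduced and irreducible. I expect the main work to be exactly this non-degeneracy check: verifying that, under $m=3\ell(1-\ell)$ together with $-mB$ not a square, the surface $W$ is geometrically integral and its resolution is a genuine K3 on which the lattice action is valid, and that $W_{0}$ avoids the discriminant locus of $\pi_{3}$ (so that it is not one of the $\textup{I}_{1}$ fibers). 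Once this is in place, the orbit argument is formal.
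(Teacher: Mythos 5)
Your proof is correct in substance but takes a genuinely different route from the paper's at the decisive step. After the non-degeneracy check, the paper simply invokes Wang's dynamical theorems: by \cite[Theorem 1.6]{Wan95} only finitely many rational points have finite orbit under $\mathcal{A} = \langle \sigma_{1}, \sigma_{2}, \sigma_{3} \rangle$, so Proposition 3.1 supplies a rational point with infinite orbit, and by \cite[Theorem 1.5]{Wan95} any infinite orbit is Zariski-dense. You instead transport the whole fiber $W_{0}$ around by $\Gamma$ and certify density through the N\'eron--Severi lattice, showing the orbit of $D_{3}$ under the reflections $\sigma_{k}^{*}D_{k} = 2D_{i} + 2D_{j} - D_{k}$ is infinite; your sample computations are right, and the reflection formula is valid whatever the Picard rank, since $D_{k} + \sigma_{k}^{*}D_{k} = \pi_{ij}^{*}(\pi_{ij})_{*}D_{k} = 2D_{i} + 2D_{j}$ and $\sigma_{k}$ commutes with $\pi_{i}$ and $\pi_{j}$ (this matters here, where the geometric Picard number is $8$, not $3$). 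What your route buys is self-containedness --- an explicit lattice computation in place of a citation --- at the price of needing the full fiber of rational points rather than a single point with infinite orbit. Two calibrations against the paper: first, the step you defer as ``the main work'' is in fact a two-line application of Proposition 2.3, namely $c = -2mC \neq 0$, $ad = (1-m)A^{4} \neq A^{4} = b^{2}$, and $be = d^{2}$ if and only if $mC^{2} + B = 0$, i.e., $-mB = m^{2}C^{2}$ is a square --- so the hypothesis that $-mB$ is not a square enters exactly to force $be \neq d^{2}$ (quasi-finiteness of the $\pi_{ij}$), not geometric integrality of $W$ nor any condition on $W_{0}$ avoiding the discriminant locus; second, smoothness of $W$, which you correctly note is needed for the birational involutions to be honest automorphisms acting on $\textup{NS}\,\overline{W}$, is likewise implicitly assumed in the paper's appeal to Wang's theorems, so your proof is no worse off on that point.
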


\begin{proof}
Under our assumptions, if we write the equation of $W$ in the form (2), we have $c = -2mC \neq 0$, $ad = (1-m)A^{4} \neq A^{4} = b^{2}$ (since $m \neq 0$ and $A \neq 0$), and $be = A^{2}(A^{6} + mC^{2} + B) \neq A^{8} = d^{2}$ (since $mC^{2} + B \neq 0$ as $-mB$ is not a square in $\Zbb$). Therefore, by Proposition 2.3, $W$ is non-degenerate, so $\pi_{ij}: W \rightarrow \Pbb^{1} \times \Pbb^{1}$ is finite, i.e., a double cover, giving rise to an involution $\sigma_{k}$. Let $\mathcal{A} = \langle \sigma_{1}, \sigma_{2}, \sigma_{3} \rangle \subset \textup{Aut}(W)$. For any point $P \in W(\overline{\Qbb})$, consider $\mathcal{C}(P) = \{\phi P \; | \; \phi \in \mathcal{A} \}$.

By \cite[Theorem 1.6]{Wan95}, the set $\{ P \in W(\Qbb) \; | \; |\mathcal{C}(P)| < +\infty \}$ is finite, so by Proposition 3.1 there is some $P \in W(\Qbb)$ such that the orbit $\mathcal{C}(P)$ is infinite. By \cite[Theorem 1.5]{Wan95}, this orbit is Zariski-dense in $W$. Therefore, $W(\Qbb)$ is Zariski-dense in $W$.
\end{proof}

\section{Brauer groups of Markoff-type K3 surfaces}
We are particularly interested in the geometry of the explicit family of Markoff-type K3 surfaces defined by $(5)$, as it is more convenient for computation and also more general than those studied in \cite{Dao24b}. In addition, under our specific conditions, the surfaces that we consider are \emph{smooth}. Before studying the arithmetic problem of integral points, we will give some explicit computations on the (geometric) Picard group and the (algebraic) Brauer group of these surfaces. Recall that by \cite[Proposition 1.3]{Bi97} or \cite[Proposition 3.3]{CD23}, for a \emph{very general} $W$, $\textup{Pic}\,\overline{W}$ is isomorphic to $\textup{Pic}(\Pbb^{1} \times \Pbb^{1} \times \Pbb^{1})$, i.e., $\textup{Pic}\,\overline{W}$ is generated by the classes $D_{i}$ so the Picard number of $\overline{W}$ equals $3$. However, we will see in this section that our example of MK3 surfaces is not very general, but not so special as those considered in \cite{Dao24b}.

\subsection{Geometry of Wehler K3 surfaces}
Let $K$ be a number field with a fixed algebraic (separable) closure $\overline{K}$. If $X$ is a K3 surface over $K$, or more generally, $X$ is a smooth, projective and geometrically integral $K$-variety such that $\textup{H}^{1}(X,\mathcal{O}_{X}) = 0$, then the Picard group $\textup{Pic}\,\overline{X}$ and the N\'eron--Severi group $\textup{NS}\,\overline{X}$ are equal (see \cite[Corollary 5.1.3]{CTS21}).

Now let $W \subset \Pbb^{1} \times \Pbb^{1} \times \Pbb^{1}$ be a smooth surface over $K$ defined by a $(2,2,2)$-form $F = 0$ (so $W$ is a Wehler K3 surface). For distinct $i,j \in \{1,2,3\}$, we keep the notations $\pi_{i} : W \rightarrow \Pbb^{1}$ and $\pi_{ij} : W \rightarrow \Pbb^{1} \times \Pbb^{1}$ of the various projections of $W$ onto one or two copies of $\Pbb^{1}$. Let $D_{i}$ denote the divisor class represented by a fiber of $\pi_{i}$. We find that $(D_{i}.D_{j}) = 2$ for $i \not= j$ and since any two different fibers of $\pi_{i}$ are disjoint, we have $D_{i}^{2} = 0$. It follows that the intersection matrix $((D_{i}.D_{j}))_{i,j}$ has rank $3$, so the $D_{i}$ generates a subgroup of rank $3$ of the N\'eron--Severi group $\textup{NS}\,\overline{W}$.

We have the following result for the geometric Picard group of Wehler surfaces.
\begin{proposition}
	Let $W \subset \Pbb^{1} \times \Pbb^{1} \times \Pbb^{1}$ be a smooth, projective, geometrically integral Wehler surface over $K$. Suppose that the three planes at infinity $\{ rst = 0 \}$ cut out on $\overline{W}$ three distinct irreducible fibers $D_{1}, D_{2}, D_{3}$ over $\overline{K}$. Let $U \subset W$ be the complement of these fibers. Then $\overline{K}^{\times} = \overline{K}[U]^{\times}$ and the natural sequence
	$$ 0 \longrightarrow \bigoplus_{i=1}^{3} \Zbb D_{i} \longrightarrow \textup{Pic}\,\overline{W} \longrightarrow \textup{Pic}\,\overline{U} \longrightarrow 0 $$
	is exact.
\end{proposition}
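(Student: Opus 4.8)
The plan is to establish the two claims separately. The statement that $\overline{K}^{\times} = \overline{K}[U]^{\times}$ asserts that every invertible regular function on $\overline{U}$ is constant; this is the key hypothesis needed to invoke the exact sequence (3) from the excerpt and to run the Brauer-group machinery later. The exact sequence is the standard localization (excision) sequence in divisor theory: for a smooth variety $\overline{W}$ with an open subset $\overline{U}$ whose complement is a finite union of prime divisors $D_1, D_2, D_3$, one has the exact sequence
$$ \bigoplus_{i=1}^{3} \Zbb D_{i} \longrightarrow \textup{Pic}\,\overline{W} \longrightarrow \textup{Pic}\,\overline{U} \longrightarrow 0, $$
and the extra zero on the left (injectivity of $\bigoplus \Zbb D_i \to \textup{Pic}\,\overline{W}$) is exactly the statement that no nonzero integer combination of the $D_i$ is principal, which is where $\overline{K}^{\times} = \overline{K}[U]^{\times}$ re-enters.

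First I would prove the units claim. Since $\overline{W}$ is a smooth projective geometrically integral surface, a rational function $f \in \overline{K}(W)^{\times}$ that is regular and nowhere vanishing on $\overline{U}$ can have zeros and poles only along the complement $D_1 \cup D_2 \cup D_3$, so $\textup{div}(f) = \sum_i n_i D_i$ for some integers $n_i$. I would then use the intersection theory from Section 2.1: intersecting with each fiber class $D_j$ gives $(\textup{div}(f) . D_j) = 0$ because the class of a principal divisor is trivial, yielding the linear system $\sum_i n_i (D_i . D_j) = 0$ for $j = 1,2,3$. Since the intersection matrix $\bigl(\begin{smallmatrix} 0 & 2 & 2 \\ 2 & 0 & 2 \\ 2 & 2 & 0 \end{smallmatrix}\bigr)$ has rank $3$ (determinant $16 \neq 0$), the only solution is $n_1 = n_2 = n_3 = 0$, so $\textup{div}(f) = 0$ and hence $f$ is a global invertible function on the projective variety $\overline{W}$, forcing $f \in \overline{K}^{\times}$.

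The same rank-$3$ computation simultaneously delivers the injectivity on the left of the displayed sequence: if $\sum_i n_i D_i$ is principal in $\textup{Pic}\,\overline{W}$, then by the intersection pairing argument above all $n_i$ vanish, so the map $\bigoplus_i \Zbb D_i \to \textup{Pic}\,\overline{W}$ is injective. Exactness in the middle is the definition of restriction: the image of $\textup{Pic}\,\overline{W} \to \textup{Pic}\,\overline{U}$ kills precisely the classes supported on the complement, namely the subgroup generated by the $D_i$, since these are the prime divisors removed. Surjectivity on the right follows because $\overline{W}$ is smooth (so Weil and Cartier divisors agree and $\textup{Pic} = \textup{Cl}$): any divisor class on $\overline{U}$ is represented by a Weil divisor whose Zariski closure in $\overline{W}$ is a Weil divisor restricting back to it.

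\textbf{The main obstacle} I anticipate is not any single step but verifying that the hypotheses are genuinely available, in particular that the three fibers $D_i$ really are distinct irreducible divisors — this is assumed in the statement, but it is what makes ``prime divisor'' rather than ``effective divisor'' the right notion, and irreducibility is what lets each $D_i$ generate a free rank-one summand cleanly rather than a reducible combination. Granting this, the heart of the argument is the linear-algebra fact that the intersection matrix is nondegenerate; everything else is the formal localization sequence for Picard groups of smooth varieties, which I would cite rather than reprove.
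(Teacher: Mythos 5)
Your proof is correct and follows essentially the same route as the paper: both obtain injectivity of $\bigoplus_{i} \Zbb D_{i} \to \textup{Pic}\,\overline{W}$ from the nondegeneracy of the intersection matrix $(D_{i}.D_{j})$ and get the rest of the sequence from the standard localization/excision result (the paper cites \cite[Proposition 1.8]{Ful98}). The only difference is cosmetic: the paper derives $\overline{K}^{\times} = \overline{K}[U]^{\times}$ as an immediate consequence of the same intersection computation, whereas you spell out the intermediate step that a unit on $\overline{U}$ has divisor $\sum_{i} n_{i}D_{i}$, which is exactly the implicit content of the paper's remark.
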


\begin{proof}
Note that by \cite[Proposition 1.8]{Ful98}, in order to show that the above sequence is exact, it suffices to prove that the second arrow is an injective homomorphism. Let 
$$ a_{1}D_{1} + a_{2}D_{2} + a_{3}D_{3} = 0 \in \textup{Pic}\,\overline{W} $$
with $a,b,c \in \Zbb$. By the assumption that $(D_{i}.D_{i}) = 0$ and $(D_{i}.D_{j}) = 2$ for $1 \leq i \not= j \leq 3$, one has 
$$ 2a_{2} + 2a_{3} = 2a_{1} + 2a_{3} = 2a_{1} + 2a_{2} = 0, $$
so $a_{1} = a_{2} = a_{3} = 0$. In other words, this is another proof of the fact that $D_{1}, D_{2}, D_{3}$ are linearly independent in $\textup{Pic}\,\overline{W}$ and it also shows that $\overline{K}^{\times} = \overline{K}[U]^{\times}$ as desired.
\end{proof}

Next, we will give an explicit computation of the geometric Picard group and the algebraic Brauer group for the family of Markoff-type K3 surfaces defined by $(5)$.

\subsection{The geometric Picard group}
If $X$ is a K3 surface, then linear, algebraic, and numerical equivalence all coincide (see \cite{Huy16}). This means that the Picard group $\textup{Pic}\,\overline{X}$ and the Néron--Severi group $\textup{NS}\,\overline{X}$ of $\overline{X} := X_{\overline{K}}$ are naturally isomorphic, finitely generated, and free. Their rank is called the \emph{geometric} Picard number of $X$ or the Picard number of $\overline{X}$. By the \emph{Hodge Index Theorem}, the intersection pairing on $\textup{Pic}\,\overline{X}$ is even, non-degenerate, and of signature $(1, \textup{rk}\,\textup{NS}\,\overline{X} - 1)$.

Using the explicit equations, we compute the geometric Picard group of the Markoff-type K3 surfaces in question. To bound the Picard number we use the method described in \cite{vL07b}. Let $X$ be any smooth surface over a number field $K$ and let $\mathfrak{p}$ be a prime of good reduction with residue field $\kappa$. Let $\mathcal{X}$ be an integral model for $X$ over the localization $\mathcal{O}_{\mathfrak{p}}$ of the ring of integers $\mathcal{O}$ of $K$ at $\mathfrak{p}$ for which the reduction is smooth. Let $\kappa'$ be any extension field of $\kappa$. Then by abuse of notation, we will write $X_{\kappa'}$ for $\mathcal{X} \times_{\textup{Spec}\,\mathcal{O}_{\mathfrak{p}}} \,\textup{Spec}\,\kappa'$. We need the following important result which describes the behavior of the Néron--Severi group under good reduction.

\begin{proposition}
Let $X$ be a smooth surface over a number field $K$ and let $\mathfrak{p}$ be a prime of good reduction with residue field $\kappa$. Let $l$ be a prime not dividing $q = \# \kappa$. Let $\textup{Frob}^{*}_{q}$ denote the automorphism on $\textup{H}^{2}_{\textup{ét}}(X_{\overline{\kappa}}, \Qbb_{l}(1))$ induced by the $q$-th power Frobenius. Then there are natural injective homomorphisms
$$ \textup{NS}(X_{\overline{K}}) \otimes_{\Zbb} \Qbb_{l} \hookrightarrow \textup{NS}(X_{\overline{\kappa}}) \otimes_{\Zbb} \Qbb_{l} \hookrightarrow \textup{H}^{2}_{\textup{ét}}(X_{\overline{\kappa}}, \Qbb_{l})(1) $$
of finite-dimensional vector spaces over $\Qbb_{l}$, that respect the intersection pairing and the action of Frobenius respectively. The rank of $\textup{NS}(X_{\overline{\kappa}})$ is at most the number of eigenvalues of $\textup{Frob}^{*}_{q}$ that are roots of unity, counted with multiplicity.
\end{proposition}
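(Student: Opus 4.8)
The plan is to exhibit both arrows as $l$-adic cycle class maps and to exploit the fact, due to smooth and proper base change, that the middle cohomology of the generic and special fibres are canonically identified. First I would fix a smooth proper model $\mathcal{X} \to \textup{Spec}\,\mathcal{O}_{\mathfrak{p}}$ with generic fibre $X$ and geometric special fibre $X_{\overline{\kappa}}$, and construct the \emph{specialization homomorphism} $\textup{sp} : \textup{NS}(X_{\overline{K}}) \to \textup{NS}(X_{\overline{\kappa}})$. Concretely, a class in $\textup{NS}(X_{\overline{K}})$ is represented by a divisor defined over a finite extension $L/K$; since $\mathcal{X}$ is regular, this divisor spreads out to a divisor on $\mathcal{X}_{\mathcal{O}_{L}}$, and one restricts to the special fibre. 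One checks this is well-defined on divisor classes and compatible with intersection numbers, which are locally constant in flat families.

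Second, I would prove injectivity of the first arrow after $\otimes_{\Zbb} \Qbb_{l}$. The key point is that $\textup{sp}$ is compatible with the cycle class maps into $\textup{H}^{2}_{\textup{ét}}(-, \Qbb_{l}(1))$ via the base change isomorphism $\textup{H}^{2}_{\textup{ét}}(X_{\overline{K}}, \Qbb_{l}(1)) \cong \textup{H}^{2}_{\textup{ét}}(X_{\overline{\kappa}}, \Qbb_{l}(1))$, which holds because $\mathfrak{p}$ is a prime of good reduction and $l \neq \textup{char}\,\kappa$. Since the cycle class map on $\textup{NS} \otimes \Qbb_{l}$ is injective---$l$-adic homological and numerical equivalence agree modulo torsion, so the intersection form is nondegenerate on the image---injectivity of $\textup{sp} \otimes \Qbb_{l}$ follows, and the intersection pairing is respected by construction. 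This yields the first injection.

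Third, the second arrow is simply the cycle class map $\textup{NS}(X_{\overline{\kappa}}) \otimes \Qbb_{l} \hookrightarrow \textup{H}^{2}_{\textup{ét}}(X_{\overline{\kappa}}, \Qbb_{l})(1)$, injective for the same reason. The Frobenius-equivariance statement then comes from the observation that every class in $\textup{NS}(X_{\overline{\kappa}})$ is represented by a divisor defined over some finite field $\Fbb_{q^{n}}$, whose cycle class is fixed by $(\textup{Frob}^{*}_{q})^{n}$; hence the image lands inside the span of the generalized eigenspaces of $\textup{Frob}^{*}_{q}$ whose eigenvalues are roots of unity. By Deligne's proof of the Weil conjectures, $\textup{Frob}^{*}_{q}$ acts on $\textup{H}^{2}_{\textup{ét}}(X_{\overline{\kappa}}, \Qbb_{l})(1)$ with eigenvalues of absolute value $1$, and those that are roots of unity account, with multiplicity, for the dimension of this span. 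Since the cycle class map is injective, $\textup{rk}\,\textup{NS}(X_{\overline{\kappa}})$ is bounded above by this number, which is the desired conclusion.

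The main obstacle is the injectivity of the specialization map together with its compatibility with the cycle class map: this is where the smooth proper base change theorem and the identification of numerical with $l$-adic homological equivalence (modulo torsion) do the real work, and where one must check carefully that spreading out divisors over a ramified extension and restricting to the special fibre is well-defined on Néron--Severi classes and preserves intersection numbers. The eigenvalue count itself is then a formal consequence of Galois-equivariance of the cycle class map and the Weil bounds.
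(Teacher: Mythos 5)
Your proposal is correct, and it reconstructs precisely the argument the paper defers to: the paper's ``proof'' is just a citation of van Luijk \cite{vL07a} (and \cite{BL07}), whose proof is exactly your route --- the specialization homomorphism on N\'eron--Severi groups, injectivity via compatibility of cycle class maps with the smooth proper base change isomorphism and the agreement of homological and numerical equivalence for divisors modulo torsion, and the eigenvalue bound from the fact that every divisor class over $\overline{\kappa}$ is defined over some $\mathbb{F}_{q^{n}}$ and hence fixed by a power of Frobenius. No gaps; your appeal to Deligne's Weil bounds is not even needed for the stated eigenvalue count, which is formal once the image of $\textup{NS}(X_{\overline{\kappa}}) \otimes \mathbb{Q}_{l}$ is shown to be a Frobenius-stable subspace on which a power of $\textup{Frob}^{*}_{q}$ acts trivially.
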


\begin{proof}
See \cite[Proposition 6.2 and Corollary 6.4]{vL07a} or \cite[Proposition 2.3]{BL07}.
\end{proof}

\begin{proposition}
Let $k, m, C \in \Zbb \setminus \{0\}$ and $W \subset \Pbb^{1} \times \Pbb^{1} \times \Pbb^{1}$ be a surface defined over $\Qbb$ by the $(2,2,2)$-form
$$ F(x,y,z) = (x^{2} - 36)(y^{2} - 36)(z^{2} - 36) - m(xyz + C)^{2} - k = 0. $$
If $k \equiv 1, m \equiv 1, C \equiv 3$ (\textup{mod} $7$), then $W$ is a smooth K3 surface and the Picard number of $\overline{W} = W_{\overline{\Qbb}}$ equals $8$.
\end{proposition}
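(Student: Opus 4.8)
The plan is to use the reduction method of Proposition 4.4 (van Luijk's method) at two primes of good reduction, combining the resulting upper bounds with a lower bound coming from explicit curves on the surface. The general strategy is: first establish that the geometric Picard number is at least $8$ by exhibiting enough independent divisor classes, then prove it is at most $8$ by bounding the number of Frobenius eigenvalues that are roots of unity in $\textup{H}^2_{\textup{ét}}(\overline{W}_{\kappa},\Qbb_l)(1)$ for suitable reduction primes.

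First I would verify smoothness: under the congruence conditions $k\equiv 1$, $m\equiv 1$, $C\equiv 3 \pmod 7$ one checks that the reduction $\overline{W}$ mod $7$ is smooth (a Jacobian-criterion computation on the $(2,2,2)$-form), which certifies that $7$ is a prime of good reduction and that $W$ itself is a smooth K3 surface. For the \emph{lower} bound on the Picard number, I would locate extra divisor classes beyond the three fibral classes $D_1, D_2, D_3$ of Proposition 4.2. The natural source is the symmetry group $\mathcal{G} = (\mu_2^3)_1 \rtimes \mathfrak{S}_3$ together with the Markoff-type structure: the involutions $\sigma_k$ and sign/permutation automorphisms act on $\textup{NS}\,\overline{W}$, and the singular or reducible fibers of the genus-$1$ fibrations $\pi_i$ contribute components. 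Concretely, I would look for sections or reducible fibers of $\pi_3$ (for instance over the special values of $z$ where the $(2,2)$-curve degenerates) whose components split off new classes; the fiber at infinity studied in Proposition 3.1 and the sign-change symmetry typically force several $I_2$ or $I_4$ fibers, each extra component adding to the Néron--Severi lattice. I would assemble an explicit $8\times 8$ intersection matrix of independent classes and check its determinant is nonzero, giving $\rho(\overline{W}) \geq 8$.

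For the \emph{upper} bound I would apply Proposition 4.4. Using SageMath (as acknowledged), I would count points on $W_\kappa$ over $\Fbb_{7^n}$ for enough $n$ to compute the characteristic polynomial of $\textup{Frob}_7^*$ on $\textup{H}^2_{\textup{ét}}$ via the Lefschetz trace formula and the known Betti numbers of a K3 surface ($b_2 = 22$). After dividing by $q$ to normalize to $\textup{H}^2(1)$, I would factor the characteristic polynomial and count how many eigenvalues are roots of unity, i.e.\ how many cyclotomic factors appear; Proposition 4.4 then gives $\rho(\overline{W}) \le$ that count. The standard subtlety is that a single prime usually only yields an \emph{even} upper bound that may exceed the truth, because the count of root-of-unity eigenvalues is even and van Luijk's parity/discriminant refinement is needed. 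If the reduction at $7$ already gives the bound $8$, a single prime suffices; if it gives $10$ or more, I would repeat at a second good prime (e.g.\ some $p \equiv$ the required residues allowing smooth reduction) and either take the minimum of the two upper bounds or use the non-square-discriminant obstruction: if the two reductions give Néron--Severi lattices of rank $8$ with discriminants differing by a non-square factor, then $\rho(\overline{W}) < 8$ is impossible to reconcile, pinning the geometric rank at exactly $8$.

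The main obstacle will be the upper bound, specifically producing a reduction prime at which $\textup{Frob}^*$ has exactly $8$ root-of-unity eigenvalues (or arranging the two-prime discriminant argument so the parity gap is closed). Reductions of these highly symmetric Markoff-type surfaces tend to have anomalously large Picard number because the $\mathcal{G}$-symmetry and the many $I_1$ (or degenerate) fibers create supersingular-like behavior, so finding a prime where the characteristic polynomial has few cyclotomic factors is delicate and relies on the specific congruence $k\equiv 1, m\equiv 1, C\equiv 3 \pmod 7$ being chosen precisely to avoid such degeneration. I expect the congruence conditions in the hypothesis are exactly what guarantee both smooth reduction and a computable Frobenius action at $7$ whose cyclotomic part has dimension $8$; the bulk of the work is the point-counting and the verification that the $8$ geometrically defined classes I exhibit are linearly independent, after which the matching of the lower and upper bounds yields $\rho(\overline{W}) = 8$.
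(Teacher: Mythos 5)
Your proposal is correct and follows essentially the same route as the paper: smoothness is checked via the reduction at $p=7$, the lower bound $\rho(\overline{W}) \geq 8$ comes from an explicit $8\times 8$ intersection matrix of nonzero determinant (the paper uses $D_{1},D_{2},D_{3}$ together with five components $C_{i}^{\pm\pm}$ of the reducible fibers over $x,y,z = \pm 6$, confirming your guess that reducible fibers supply the extra classes), and the upper bound is van Luijk's point-counting method at the single prime $7$. Your worry about a parity gap does not arise since the target rank $8$ is even; the paper closes the computation at one prime by splitting off the known $8$-dimensional Frobenius subspace $V$ with characteristic polynomial $(t-1)^{3}(t+1)^{5}$, computing the degree-$14$ quotient polynomial via Newton identities and the functional equation from only seven point counts, and observing it is irreducible and non-cyclotomic because $\varphi(n) = 14$ has no solution.
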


\begin{proof}
Since $k \equiv 1$ mod $7$, one can check that $W_{7} := W_{\Fbb_{7}}$ is smooth, so $W$ itself is smooth and $W$ has good reduction at $p = 7$. Since $W \subset \Pbb^{1} \times \Pbb^{1} \times \Pbb^{1}$ is defined over $\Qbb$ by a $(2,2,2)$-form $F = 0$, it is a smooth elliptic K3 surface. For $i = 1, 2, 3$, let $\pi_{i} : W \rightarrow \Pbb^{1}$ be the projection from $W$ to the $i$-th copy of $\Pbb^{1}$ in $\Pbb^{1} \times \Pbb^{1} \times \Pbb^{1}$. Let $D_{i}$ denote the divisor class represented by a fiber of $\pi_{i}$. Note that since $k \equiv m \equiv 1$ mod $7$, $k,m$ are nonzero and $-km \equiv -1$ mod $7$, so $-km$ and $-k/m$ are not square in $\Qbb$. Consider the irreducible components of \emph{singular} fibers whose corresponding divisor classes on $\overline{W}$ are given explicitly as follows (denote by $[x:r], [y:s], [z:t]$ the coordinates for each point in $\Pbb^{1} \times \Pbb^{1} \times \Pbb^{1}$):

\begin{equation*}
    \begin{cases}
    C_{1}^{\pm \pm}: [x:r] = [\pm 6:1], xyz = \left(\pm \sqrt{\frac{-k}{m}} - C \right)st,\\
    C_{2}^{\pm \pm}: [y:s] = [\pm 6:1], xyz = \left(\pm \sqrt{\frac{-k}{m}} - C \right)rt,\\
    C_{3}^{\pm \pm}: [z:t] = [\pm 6:1], xyz = \left(\pm \sqrt{\frac{-k}{m}} - C \right)rs. 
    \end{cases}
\end{equation*}

We will now find explicit generators for the geometric Picard group of $W$. It is clear that $W$ is a K3 surface admitting an elliptic fibration $\pi_{1} : W \rightarrow \Pbb^{1}$ with a zero section defined by $C_{1}^{++} \simeq \Pbb^{1}$. The N\'eron--Severi group of an elliptic fibration on the K3 surface is the lattice generated by the class of a (general) fiber, the class of the zero section, the classes of the irreducible components of the reducible fibers which do not intersect the zero section, and the Mordell--Weil group (the set of the sections). Following this property, we find a set $S$ of $8$ linearly independent divisor classes in $\textup{Pic}\,\overline{W}$ consisting of:
\begin{enumerate}
	\item[(i)] $D_{1}, D_{2}, D_{3}$ (some classes of fibers);
	\item[(ii)] $C_{1}^{++}, C_{1}^{-+}, C_{2}^{++}, C_{2}^{-+}, C_{3}^{++}$ (some classes of irreducible components of singular fibers).
\end{enumerate}
Their Gram matrix of the intersection pairing on $\textup{Pic}\,\overline{W}$ has determinant $-192$, which is nonzero, so they are indeed linearly independent. More precisely, the intersection matrix associated to the sequence of divisor classes of $S$ given in the above order is 
\[
\begin{pmatrix}
0 & 2 & 2 & 0 & 0 & 1 & 1 & 1\\
2 & 0 & 2 & 1 & 1 & 0 & 0 & 1\\
2 & 2 & 0 & 1 & 1 & 1 & 1 & 0\\
0 & 1 & 1 &-2 & 0 & 1 & 1 & 1\\
0 & 1 & 1 & 0 &-2 & 1 & 1 & 1\\
1 & 0 & 1 & 1 & 1 &-2 & 0 & 1\\
1 & 0 & 1 & 1 & 1 & 0 &-2 & 1\\
1 & 1 & 0 & 1 & 1 & 1 & 1 &-2\\
\end{pmatrix}.
\]
As a result, the Picard number of $\overline{W}$ is at least $8$.
\\~\\
\indent We will now show that the Picard number of $\overline{W}_{7}$ equals exactly $8$. Let $\overline{W}_{7}$ be the base change of $W_{7}$ to an algebraic closure of $\Fbb_{7}$, and $\textup{Frob}_{7} : \overline{W}_{7} \rightarrow \overline{W}_{7}$ the geometric Frobenius morphism, defined by $([x:r],[y:s],[z:t]) \mapsto ([x^{7}:r^{7}], [y^{7}:s^{7}], [z^{7}:t^{7}])$. Choose a prime $l \not= 7$ and let $\textup{Frob}_{7}^{*}$ be the endomorphism of $\textup{H}^{2}_{\textup{\'et}}(\overline{W}_{7}, \Qbb_{l}(1))$ induced by $\textup{Frob}_{7}$. By Proposition 3.4, the Picard rank of $\overline{W}$ is bounded above by that of $\overline{W}_{7}$, which in turn is at most the number of eigenvalues of $\textup{Frob}_{7}^{*}$ that are roots of unity. As in \cite{vL07a}, we find the characteristic polynomial of $\textup{Frob}_{7}^{*}$ by counting points on $W_{7}$. Almost all fibers of the fibration $\pi_{1}$ are smooth curves of genus 1. Using \textsc{Magma} and SageMath, we can count the number of points over small fields fiber by fiber. The first seven results are: 
$$ \begin{aligned} 
&\# W_{7}(\Fbb_{7}) = 43, \hspace{0.25cm} \# W_{7}(\Fbb_{7^{2}}) = 2843, \hspace{0.25cm} \# W_{7}(\Fbb_{7^{3}}) = 113191, \hspace{0.25cm} \# W_{7}(\Fbb_{7^{4}}) = 5786411,\\ &\# W_{7}(\Fbb_{7^{5}}) = 282458443, \hspace{0.25cm} \# W_{7}(\Fbb_{7^{6}}) = 13843757831, \hspace{0.25cm} \# W_{7}(\Fbb_{7^{7}}) = 678222249307.
\end{aligned}
$$ 

By \cite[Lemma 6.1]{vL07a}, we have $\dim \textup{H}^{i}(\overline{W}, \Qbb_{l}) = \dim \textup{H}^{i}(\overline{W}_{7}, \Qbb_{l})$ for $0 \leq i \leq 4$. Since $\overline{W}$ is a K3 surface, the Betti numbers equal $\dim \textup{H}^{i}(\overline{W}_{7}, \Qbb_{l}) = 1,0,22,0,1$ for $i = 0,1,2,3,4$, respectively. Therefore, from the Weil conjectures and the Lefschetz trace formula, we find that the trace of the $n$-th power of Frobenius acting on $\textup{H}^{2}_{\textup{\'et}}(\overline{W}_{7}, \Qbb_{l})$ equals $\# W_{7}(\Fbb_{7^{n}}) - 7^{2n} - 1$; the trace on the Tate twist $\textup{H}^{2}_{\textup{\'et}}(\overline{W}_{7}, \Qbb_{l}(1))$ is obtained by \emph{dividing} by $7^{n}$. Meanwhile, on the subspace $V \subset \textup{H}^{2}_{\textup{\'et}}(\overline{W}_{7}, \Qbb_{l}(1))$ generated over $\Qbb_{l}$ by the set $S$ of $8$ linearly independent divisor classes, $\textup{Frob}_{7}^{*}$ acts trivially on $D_{1},D_{2},D_{3}$ and $\textup{Frob}_{7}^{*}(C_{i}^{\pm \pm}) = C_{i}^{\pm \mp} = D_{i} - C_{i}^{\pm \pm}$, so the characteristic polynomial of the Frobenius acting on $V$ is $(t-1)^{3} (t+1)^{5}$. The trace $t_{n}$ is thus equal to $8$ if $n$ is even, and equal to $-2$ if $n$ is odd. Hence, on the $14$-dimensional quotient $Q = \textup{H}^{2}_{\textup{\'et}}(\overline{W}_{7}, \Qbb_{l}(1))/V$, the trace equals 
$$ \frac{\# W_{7}(\Fbb_{7^{n}})}{7^{n}} - 7^{n} - \frac{1}{7^{n}} - t_{n}. $$ 
These traces are sums of powers of eigenvalues, and we use the Newton identities to compute the elementary symmetric polynomials in these eigenvalues, which are the coefficients of the characteristic polynomial $f$ of the Frobenius acting on $Q$ (see \cite[Lemma 2.4]{vL07b}). This yields the first half of the coefficients of $f$, including the middle coefficient, which turns out to be non-zero. This implies that the sign in the functional equation $t^{14}f(1/t) = \pm f(t)$ is $+1$, so this functional equation determines $f$, which we calculate to be
$$ f(t) = t^{14} - t^{13} + 4t^{11} - 4t^{10} + 6t^{8} - 6t^{7} + 6t^{6} - 4t^{4} + 4t^{3} - t + 1. $$
As a result, we find that the characteristic polynomial of the Frobenius acting on $\textup{H}^{2}_{\textup{\'et}}(\overline{W}_{7}, \Qbb_{l}(1))$ is equal to $(t-1)^{3} (t+1)^{5} f$. The monic polynomial $f \in \Zbb[t]$ is irreducible and not cyclotomic, as there does not exist $n \in \Nbb$ such that the value of Euler's totient function at $n$ is $\varphi(n) = 14$, so its roots are not roots of unity. Thus, we obtain an upper bound of $8$ for the Picard number of $\overline{W}$.

Therefore, we deduce that $\textup{rk}\,\textup{Pic}\,\overline{W} = 8$, and the aforementioned sequence $S$ of $8$ divisor classes forms a sublattice $\Lambda \subset \textup{NS}\,\overline{W} = \textup{Pic}\,\overline{W}$ of finite index. From properties of Wehler K3 surfaces and the intersection pairings of the classes in $S$ with the other classes in the list of Proposition 3.5, we find that

\begin{equation}
\begin{cases}
D_{1} = C_{1}^{++} + C_{1}^{+-} = C_{1}^{-+} + C_{1}^{--}, \\
D_{2} = C_{2}^{++} + C_{2}^{+-} = C_{2}^{-+} + C_{2}^{--}, \\
D_{3} = C_{3}^{++} + C_{3}^{+-} = C_{3}^{-+} + C_{3}^{--}, \\
D_{1} + D_{2} + D_{3} = C_{1}^{++} + C_{1}^{-+} + C_{2}^{++} + C_{2}^{-+} + C_{3}^{++} + C_{3}^{-+}.
\end{cases}
\end{equation}

If $\Lambda$ is the whole lattice $\textup{NS}\,\overline{W}$, then $S$ forms a \emph{basis} of $\textup{Pic}\,\overline{W}$. Otherwise, assume that $\Lambda$ is a proper sublattice of $\textup{NS}\,\overline{W}$, so their discriminants differ by a square factor. We know that $\textup{disc}(\Lambda) = -192 = -2^{6} \cdot 3$, so $\Lambda$ would be a sublattice of index $2$, $4$ or $8$. In other words, there would exist a divisor class of the form 
$$ E = \frac{1}{2} \sum_{S_{i} \in S} a_{i}S_{i}, \hspace{0.5cm} a_{i} \in \{0,1\}, $$
in $\textup{Pic}\,\overline{W}$. Since the intersection pairing between $E$ and each divisor class in $S$ would give an \emph{integer} value, we find that $E$ would have to be a linear combination of the following classes:

\begin{enumerate}
	\item[(a)] $E_{1} = \frac{1}{2}(D_{1} + D_{2} + D_{3})$;
	
	\item[(b)] $E_{2} = \frac{1}{2}(C_{1}^{++} + C_{2}^{-+} + C_{3}^{++})$;
	
	\item[(c)] $E_{3} = \frac{1}{2}(C_{1}^{-+} + C_{2}^{-+} + C_{3}^{++})$;
	
	\item[(d)] $E_{4} = \frac{1}{2}(C_{2}^{++} + C_{2}^{-+})$.
\end{enumerate}

In (a), we can check that $E^{1} = 3$ is odd, which is a contradiction since the intersection pairing on $\textup{Pic}\,\overline{W}$ is \emph{even}. In (d), we have $E^{2} = -1$ is odd, which is again a contradiction. To check the divisibility by $2$ in $\textup{Pic}\,\overline{W}$, we only need to consider all the possible sums among the above classes. By considering all these sums and taking into account that $\mathcal{G} := (\mu_{2}^{3})_{1} \rtimes \mathfrak{S}_{3}$ is a subgroup of the automorphism group of $W$ (see Section 2.2) and the linear relation (deduced by using the intersection pairing)
$$ D_{1} + D_{2} + D_{3} = C_{1}^{++} + C_{1}^{-+} + C_{2}^{++} + C_{2}^{-+} + C_{3}^{++} + C_{3}^{-+},$$ 
it suffices to study the divisibility by $2$ of the two following classes:

\begin{enumerate}
	\item[(e)] $E_{5} = \frac{1}{2}(C_{1}^{++} + C_{2}^{++} + C_{3}^{++})$;
	
	\item[(f)] $E_{6} = \frac{1}{2}(D_{1} + D_{2} + D_{3} + C_{1}^{++} + C_{1}^{-+})$.
\end{enumerate}

Consider the quadratic field extension $K := \Qbb(\sqrt{-km})$ of $\Qbb$, as $-km$ is not a square in $\Qbb$. If (e) were true, by the action of the Galois group $\textup{Gal}(K/\Qbb)$ on $\textup{Pic}\,\overline{W}$, we would also have $E_{7} = \frac{1}{2}(C_{1}^{+-} + C_{2}^{+-} + C_{3}^{+-}) \in \textup{Pic}\,\overline{W}$. Then the sum of these two classes would belong to $\textup{Pic}\,\overline{W}$, so $D_{1} + D_{2} + D_{3}$ is divisible by $2$. This is a contradiction by the case (a).

In (f), we would have $E_{6}^{2} = 4$, and (e) would also imply that $E_{8} = \frac{1}{2}(D_{1} + D_{2} + D_{3} + C_{2}^{++} + C_{2}^{-+}) \in \textup{Pic}\,\overline{W}$ by the action of the symmetry group $\mathfrak{S}_{3}$ on $W$. In the set $S$, we replace $C_{1}^{-+}$ by $E_{6}$ and $C_{2}^{-+}$ by $E_{8}$ to form a new set $S'$, and then we obtain a new corresponding sublattice $\Lambda'$ of $\textup{Pic}\,\overline{W}$ whose discriminant equals $-12 = 2^{2} \cdot 3$. Then $\Lambda'$ would be the whole lattice or a proper sublattice of index $2$. We will show that the latter cannot happen. Indeed, suppose that there exists a divisor class of the form 
$$ E' = \frac{1}{2} \sum_{S'_{i} \in S'} a_{i}S'_{i}, \hspace{0.5cm} a_{i} \in \{0,1\}, $$
in $\textup{Pic}\,\overline{W}$. Since the intersection pairing between $E'$ and each divisor class in $S'$ would give an \emph{integer} value, we find that $E'$ would have to be a linear combination of the following classes:

\begin{enumerate}
	\item[(1)] $E'_{1} = \frac{1}{2}(D_{1} + D_{2} + D_{3})$;
	
	\item[(2)] $E'_{2} = \frac{1}{2}(C_{1}^{++} + C_{2}^{++} + C_{3}^{++})$.
\end{enumerate}

However, we know from above that both $E'_{1}$ and $E'_{2}$ do not belong to $\textup{Pic}\,\overline{W}$. Furthermore, $2(E'_{1} - E'_{2}) = C_{1}^{-+} + C_{2}^{-+} + C_{3}^{-+}$ is also not divisible by $2$, which can be proven by a similar argument to the one for $E'_{2}$. Therefore, we have $\Lambda' = \textup{NS}\,\overline{W} = \textup{Pic}\,\overline{W}$ with a basis given by $S'$.
\end{proof}

Next, we consider the geometric Picard group of the affine surface $U$ defined by the same equation.

\begin{corollary}
Let $U = W \setminus \{rst = 0\}$ be the affine surface defined by 
$$ F(x,y,z) = (x^{2} - 36)(y^{2} - 36)(z^{2} - 36) - m(xyz + C)^{2} - k = 0. $$
If $k \equiv 1, m \equiv 1, C \equiv 3$ (\textup{mod} $7$), then the Picard number of $\overline{U} = U_{\overline{\Qbb}}$ equals $5$.
\end{corollary}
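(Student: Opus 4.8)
The plan is to realize $\overline{U}$ as the complement in $\overline{W}$ of the divisor at infinity $\{rst = 0\}$ and to invoke Proposition 3.5. Its excision sequence
$$ 0 \longrightarrow \bigoplus_{i=1}^{3} \Zbb D_{i} \longrightarrow \textup{Pic}\,\overline{W} \longrightarrow \textup{Pic}\,\overline{U} \longrightarrow 0 $$
identifies $\textup{Pic}\,\overline{U}$ with the quotient of $\textup{Pic}\,\overline{W}$ by the free rank-$3$ subgroup generated by the classes $D_{1}, D_{2}, D_{3}$ of the three fibers at infinity, their linear independence being the nonsingularity of the intersection matrix with $0$ on the diagonal and $2$ off it, as already recorded in the proof of Proposition 3.5. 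Since $\textup{Pic}\,\overline{W}$ is free of rank $8$ by Proposition 3.6, the quotient has rank $8 - 3 = 5$, which gives the claim. Thus the whole statement reduces to verifying the single hypothesis of Proposition 3.5: that the three planes $\{r=0\}$, $\{s=0\}$, $\{t=0\}$ cut out on $\overline{W}$ three \emph{distinct irreducible} fibers.

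To check this I would homogenize $F$ to the $(2,2,2)$-form $\overline{F}(x,r;y,s;z,t)$ and restrict to each plane at infinity; by the $\mathcal{G}$-symmetry it suffices to treat $\{r=0\}$. Setting $x=1, r=0$ yields the fiber
$$ D_{1} : (y^{2} - 36 s^{2})(z^{2} - 36 t^{2}) - m y^{2} z^{2} = 0 \subset \Pbb^{1} \times \Pbb^{1}, $$
which is exactly the curve $W_{0}$ appearing (with $A=6$) in Proposition 3.1. I would then establish irreducibility in either of two equivalent ways: recall from Proposition 3.1 that $W_{0}$ is birational to the smooth elliptic curve $E$, so its class is that of an irreducible genus-$1$ fiber; or, more directly, present the affine model as the double cover $y^{2} = 36(z^{2}-36)/\big((1-m)z^{2} - 36\big)$ of the $z$-line and observe that the right-hand side is not a square in $\overline{\Qbb}(z)$ because $z=6$ is a simple zero, the hypothesis $m \neq 0$ guaranteeing this zero is not cancelled by the denominator. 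Either way $D_{1}$, and by symmetry $D_{2}$ and $D_{3}$, are irreducible; being fibers of three distinct projections they are manifestly distinct.

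With the hypothesis of Proposition 3.5 verified, the exact sequence applies verbatim and the rank computation of the first paragraph yields $\textup{rk}\,\textup{Pic}\,\overline{U} = 5$. The only genuine work is the irreducibility check for the fibers at infinity; everything else is a formal consequence of the rank-$8$ result of Proposition 3.6 together with the exactness of the excision sequence. I expect the delicate point to be confirming irreducibility uniformly across all admissible $m$ — in particular not overlooking the degenerate value $m=1$, where the fiber collapses to a conic rather than a genus-$1$ curve — but the non-square argument above handles this case as well, since $z=6$ remains a simple zero of the relevant rational function.
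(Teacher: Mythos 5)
Your proposal is correct and takes essentially the same route as the paper: both apply the excision sequence $0 \to \bigoplus_{i=1}^{3} \Zbb D_{i} \to \textup{Pic}\,\overline{W} \to \textup{Pic}\,\overline{U} \to 0$ to the rank-$8$ geometric Picard group computed for $\overline{W}$, giving $\textup{rk}\,\textup{Pic}\,\overline{U} = 8 - 3 = 5$. Your explicit verification of the excision hypothesis---that the three fibers at infinity are distinct and irreducible, via the double-cover non-square argument valid for all $m \neq 0$ including $m = 1$---is a point the paper leaves implicit, while the paper's proof additionally records that $\{D_{1}, D_{2}, D_{3}\}$ is part of a basis ($S$ or $S'$) of $\textup{Pic}\,\overline{W}$, so that $\textup{Pic}\,\overline{U}$ is moreover \emph{free}, a fact used later in the Brauer group computation.
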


\begin{proof}
By the exact sequence in Proposition 4.1, we obtain
$$ \textup{Pic}\,\overline{U} \cong \textup{Pic}\,\overline{W}/(\Zbb D_{1} \oplus \Zbb D_{2} \oplus \Zbb D_{3}), $$
so $\textup{Pic}\,\overline{U}$ is \emph{free} and the Picard number of $\overline{U}$ is equal to $8 - 3 = 5$, since $\{D_{1}, D_{2}, D_{3}\}$ is a part of the basis $S$ or $S'$ of $\textup{Pic}\,\overline{W}$ in the proof of Proposition 4.5.
\end{proof}

\begin{remark}
In this paper, we do not need to verify whether $S'$ can actually form a basis of $\textup{Pic}\,\overline{W}$. We will show that in both cases, for $S$ and $S'$, we obtain the same result for the algebraic Brauer groups of $W$ and $U$ in the next section.
\end{remark}

\subsection{The algebraic Brauer group}
Now given the geometric Picard group, we can compute directly the algebraic Brauer group of the Markoff-type K3 surfaces in question.

\begin{theorem}
For $k, m, C \in \Zbb$, let $W \subset \Pbb^{1} \times \Pbb^{1} \times \Pbb^{1}$ be the MK3 surface defined over $\Qbb$ by the $(2,2,2)$-form 
\begin{equation}
    F(x,y,z) = (x^{2} - 36)(y^{2} - 36)(z^{2} - 36) - m(xyz + C)^{2} - k = 0.
\end{equation}
If $k \equiv 1, m \equiv 1, C \equiv 3$ (\textup{mod} $7$) and $m \neq 1$, then
$$ \textup{Br}_{1}\,W/\textup{Br}_{0}\,W \cong (\Zbb/2\Zbb)^{2}. $$
Furthermore, for the affine surface $U = W \setminus \{rst = 0\}$, we even have 
$$ \textup{Br}_{1}\,U/\textup{Br}_{0}\,U \cong (\Zbb/2\Zbb)^{5}. $$
\end{theorem}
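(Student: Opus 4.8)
The plan is to compute $\textup{Br}_1/\textup{Br}_0$ via Galois cohomology of the Picard lattice, using the identification $\textup{Br}_1\,X/\textup{Br}_0\,X \cong \textup{H}^1(\Qbb, \textup{Pic}\,\overline{X})$ valid for $W$ (a projective K3, hence geometrically integral and proper) and for the smooth affine $U$ (where $\overline{\Qbb}[U]^\times = \overline{\Qbb}^\times$ by Proposition 4.1). First I would identify the Galois action on $\textup{Pic}\,\overline{W}$. Proposition 4.5 gives an explicit basis ($S$ or $S'$) of divisor classes, and the defining coordinates of the $C_i^{\pm\pm}$ involve $\sqrt{-k/m}$; since $k\equiv m\equiv 1 \pmod 7$ the quantity $-km$ is a non-square, so the Galois action factors through a small finite quotient. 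The key observation is that $G_\Qbb$ acts through $\textup{Gal}(K/\Qbb) = \Zbb/2\Zbb$ where $K = \Qbb(\sqrt{-km})$: the nontrivial element $\tau$ swaps $C_i^{++} \leftrightarrow C_i^{+-}$ and $C_i^{-+}\leftrightarrow C_i^{--}$ (i.e. flips the sign of $\sqrt{-k/m}$) while fixing $D_1,D_2,D_3$, exactly as recorded when $\textup{Frob}_7^*$ was computed in the proof of Proposition 4.5.

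The central computation is then the group cohomology $\textup{H}^1(\Zbb/2\Zbb, \textup{Pic}\,\overline{W})$ for this explicit order-$2$ action. The plan is to use the standard formula for a cyclic group of order $2$ generated by $\tau$:
\begin{equation*}
\textup{H}^1(\Zbb/2\Zbb, \textup{Pic}\,\overline{W}) = \textup{Ker}(1+\tau)/\textup{Im}(1-\tau),
\end{equation*}
and to evaluate it using the basis from Proposition 4.5 together with the relations (6) expressing $D_i = C_i^{++}+C_i^{+-}$ etc. Because $\tau$ acts as a signed permutation built from the commuting involutions $C_i^{\pm+}\mapsto C_i^{\pm-} = D_i - C_i^{\pm+}$, the lattice decomposes into $\tau$-stable pieces (the fixed part spanned by the $D_i$ and by symmetric combinations, and rank-one ``hyperbolic'' blocks on each $\{C_i^{\pm+}, C_i^{\pm-}\}$ pair subject to the fiber relation). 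I would compute $\textup{Ker}(1+\tau)$ and $\textup{Im}(1-\tau)$ block by block, being careful that the lattice is not simply a permutation module: the relations $D_i = C_i^{++}+C_i^{+-}$ tie the blocks to the fixed sublattice and cut the naive answer down. For $W$ this should yield $(\Zbb/2\Zbb)^2$; for the affine $U$ one replaces $\textup{Pic}\,\overline{W}$ by the quotient $\textup{Pic}\,\overline{W}/(\Zbb D_1\oplus \Zbb D_2\oplus \Zbb D_3)$ from Corollary 4.6, which removes the fiber relations that previously killed classes, thereby enlarging $\textup{H}^1$ to $(\Zbb/2\Zbb)^5$.

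I would then reconcile the two candidate bases $S$ and $S'$: since $\textup{H}^1$ of a fixed $G$-module is intrinsic, both bases must give the same cohomology, and I would note (as anticipated in Remark 4.7) that the $\tau$-action and the relations transform compatibly between $S$ and $S'$, so the computation can be carried out in whichever basis is more convenient, with the answer independent of which lattice is the true Néron--Severi group. The main obstacle I anticipate is the bookkeeping in the $1+\tau$ kernel modulo the $1-\tau$ image: one must track the integral (not merely rational) relations among the generators, since $\textup{H}^1$ is sensitive to the precise lattice rather than its $\Qbb$-span, and a sign or an index-$2$ relation dropped anywhere will change $(\Zbb/2\Zbb)^2$ into $(\Zbb/2\Zbb)^3$ or collapse it. Verifying that exactly two (resp. five) independent $\tau$-antiinvariant classes survive modulo coboundaries — and in particular checking the claimed hypothesis $m\neq 1$, which guarantees the fibration is genuinely singular so that the $C_i^{\pm\pm}$ are distinct irreducible components giving the stated rank — is where I expect the real work to lie.
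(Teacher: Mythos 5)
Your proposal follows essentially the same route as the paper's proof: the Hochschild--Serre identification $\textup{Br}_{1}/\textup{Br}_{0} \cong \textup{H}^{1}(\Qbb, \textup{Pic})$, the observation that the Galois action factors through $\textup{Gal}(\Qbb(\sqrt{-km})/\Qbb)$ with $\sigma(C_{i}^{\pm\pm}) = C_{i}^{\pm\mp} = D_{i} - C_{i}^{\pm\pm}$, the computation of $\textup{Ker}(1+\sigma)/\textup{Im}(1-\sigma)$ carried out in both candidate bases $S$ and $S'$, and the passage to the quotient lattice $\textup{Pic}\,\overline{W}/(\Zbb D_{1} \oplus \Zbb D_{2} \oplus \Zbb D_{3})$, on which $\sigma$ acts as $-\textup{id}$ so that the answer becomes $\textup{Pic}\,\overline{U}/2\,\textup{Pic}\,\overline{U} \cong (\Zbb/2\Zbb)^{5}$. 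The one minor divergence is your reading of the hypothesis $m \neq 1$: the paper invokes it (via Proposition 3.1) to guarantee $W(\Qbb) \neq \emptyset$, hence $\textup{Br}_{0}\,W = \textup{Br}\,\Qbb$, rather than to ensure the $C_{i}^{\pm\pm}$ are distinct irreducible components.
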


\begin{proof}
Since $W$ is smooth, projective, geometrically integral over $K$, we have $\overline{\Qbb}[W]^{\times} = \overline{\Qbb}^{\times}$. According to our assumptions, by Proposition 3.1, we have $W(\Qbb) \not= \emptyset$, so $\textup{Br}_{0}\,W = \textup{Br}\,\Qbb$. Since $\Qbb$ is a number field, by the Hochschild--Serre spectral sequence, we have an isomorphism 
$$ \textup{Br}_{1}\,W/\textup{Br}_{0}\,W \simeq \textup{H}^{1}(\Qbb,\textup{Pic}\,\overline{W}). $$

By Proposition 4.5, the geometric Picard number of $W$ is equal to $8$ and a basis of $\textup{Pic}\,\overline{W}$ is given by 
$$ S = \{ D_{1}, D_{2}, D_{3}, C_{1}^{++}, C_{1}^{-+}, C_{2}^{++}, C_{2}^{-+}, C_{3}^{++} \} $$
or
$$ S' = \{ D_{1}, D_{2}, D_{3}, C_{1}^{++}, S_{1} := \frac{1}{2}(D_{1} + D_{2} + D_{3} + C_{1}^{++} + C_{1}^{-+}), C_{2}^{++}, S_{2} := \frac{1}{2}(D_{1} + D_{2} + D_{3} + C_{2}^{++} + C_{2}^{-+}), C_{3}^{++} \}$$
along with the corresponding intersection matrices. From properties of Wehler K3 surfaces (see Section 4.1) and the intersection pairings of the divisor classes in $\textup{Pic}\,\overline{W}$, we find that

\begin{equation}
\begin{cases}
D_{1} = C_{1}^{++} + C_{1}^{+-} = C_{1}^{-+} + C_{1}^{--}, \\
D_{2} = C_{2}^{++} + C_{2}^{+-} = C_{2}^{-+} + C_{2}^{--}, \\
D_{3} = C_{3}^{++} + C_{3}^{+-} = C_{3}^{-+} + C_{3}^{--}, \\
D_{1} + D_{2} + D_{3} = C_{1}^{++} + C_{1}^{-+} + C_{2}^{++} + C_{2}^{-+} + C_{3}^{++} + C_{3}^{-+}.
\end{cases}
\end{equation}

Consider the quadratic field extension $K = \Qbb(\sqrt{-km})$ of $\Qbb$, since $-km$ is not a square in $\Zbb$ as $-km \equiv -1$ (mod $7$). Now we study the action of the absolute Galois group on $\textup{Pic}\,\overline{W}$, which can be reduced to the action of $G = \textup{Gal}(K/\Qbb)$. One clearly has $G = \langle \sigma \rangle \cong \Zbb/2\Zbb$, where
$$ \sigma(\sqrt{-km}) = -\sqrt{-km}. $$
Given a finite cyclic group $G = \langle \sigma \rangle$ and a $G$-module $M$, by \cite[Proposition 1.7.1]{NSW15}, recall that we have isomorphisms $\textup{H}^{1}(G,M) \cong \hat{\textup{H}}^{-1}(G,M)$, where the latter group is the quotient of $\prescript{}{N_{G}}{M}$, the set of elements of $M$ of norm $0$, by its subgroup $(1 - \sigma)M$.

Note that for $1 \leq i \leq 3$, $\sigma(C_{i}^{\pm \pm}) = C_{i}^{\pm \mp} = D_{i} - C_{i}^{\pm \pm}$ in $\textup{Pic}\,\overline{W}$. Now we consider two cases as follows.

\textbf{Case 1:} If $S$ forms a basis of $\textup{Pic}\,\overline{W}$, then we have
$$ \textup{Ker}(1+\sigma) = \langle D_{1} - 2C_{1}^{++}, D_{2} - 2C_{2}^{++}, D_{3} - 2C_{3}^{++}, C_{1}^{++} - C_{1}^{-+}, C_{2}^{++} - C_{2}^{-+} \rangle, $$
and $\textup{Im}(1-\sigma) = \langle 2C_{1}^{++} - D_{1}, 2C_{1}^{-+} - D_{1}, 2C_{2}^{++} - D_{2}, 2C_{2}^{-+} - D_{2}, 2C_{3}^{++} - D_{3} \rangle = \langle D_{1} - 2C_{1}^{++}, D_{2} - 2C_{2}^{++}, D_{3} - 2C_{3}^{++}, 2(C_{1}^{++} - C_{1}^{-+}), 2(C_{2}^{++} - C_{2}^{-+} \rangle$.

By \cite[Proposition 1.6.7]{NSW15}, we have 
$$ \textup{H}^{1}(\Qbb,\textup{Pic}\,\overline{W}) = \textup{H}^{1}(G,\textup{Pic}\,\overline{W}), $$ where $G = \langle \sigma \rangle \cong \Zbb/2\Zbb$. Then $\textup{H}^{1}(G, \textup{Pic}\,\overline{W}) \cong \textup{Ker}(1+\sigma)/\textup{Im}(1-\sigma) \cong (\Zbb/2\Zbb)^{2}$.

\textbf{Case 2:} If $S'$ forms a basis of $\textup{Pic}\,\overline{W}$, then we have
$$ \textup{Ker}(1+\sigma) = \langle D_{1} - 2C_{1}^{++}, D_{2} - 2C_{2}^{++}, D_{3} - 2C_{3}^{++}, S_{1} - 2C_{1}^{++} - C_{2}^{++} - C_{3}^{++}, S_{2} - C_{1}^{++} - 2C_{2}^{++} - C_{3}^{++} \rangle, $$
and $\textup{Im}(1-\sigma) = \langle 2C_{1}^{++} - D_{1}, 2S_{1} - 2D_{1} - D_{2} - D_{3}, 2C_{2}^{++} - D_{2}, 2S_{2} - D_{1} - 2D_{2} - D_{3}, 2C_{3}^{++} - D_{3} \rangle$.

Then we also have $\textup{H}^{1}(G, \textup{Pic}\,\overline{W}) \cong \textup{Ker}(1+\sigma)/\textup{Im}(1-\sigma) \cong (\Zbb/2\Zbb)^{2}$.
\\~\\
\indent We keep the notation as above. Here for any divisor $D \in \textup{Pic}\,\overline{X}$, denote by $[D]$ its image in $\textup{Pic}\,\overline{U}$. Now by Proposition 4.1, $\textup{Pic}\,\overline{U}$ is given by the following quotient group
$$ \textup{Pic}\,\overline{U} \cong \textup{Pic}\,\overline{W}/(\Zbb D_{1} \oplus \Zbb D_{2} \oplus \Zbb D_{3}) = \langle [C_{1}^{++}], [C_{1}^{-+}], [C_{2}^{++}], [C_{2}^{-+}], [C_{3}^{++}] \rangle \hspace{0.5cm} (\textup{in Case 1}) $$
or
$$ \textup{Pic}\,\overline{U} \cong \textup{Pic}\,\overline{W}/(\Zbb D_{1} \oplus \Zbb D_{2} \oplus \Zbb D_{3}) = \langle [C_{1}^{++}], [S_{1}], [C_{2}^{++}], [S_{2}], [C_{3}^{++}] \rangle \hspace{0.5cm} (\textup{in Case 2}). $$
By Proposition 4.1, we also have $\overline{\Qbb}^{\times} = \overline{\Qbb}[U]^{\times}$. By the Hochschild--Serre spectral sequence, we have the following isomorphism 
$$ \textup{Br}_{1}\,U/\textup{Br}_{0}\,U \cong \textup{H}^{1}(\Qbb,\textup{Pic}\,\overline{U}) $$
as $\Qbb$ is a number field. Since $\textup{Pic}\,\overline{U}$ is free and $\textup{Gal}(\overline{\Qbb}/K)$ acts on $\textup{Pic}\,\overline{U}$ trivially, we obtain that $\textup{H}^{1}(\Qbb,\textup{Pic}\,\overline{U}) \cong \textup{H}^{1}(G,\textup{Pic}\,\overline{U})$. In both \textbf{Case 1} and \textbf{Case 2}, with the action of $G$, we can compute in the quotient group $\textup{Pic}\,\overline{U}$:
\begin{equation*}
\begin{cases}
\textup{Ker}(1+\sigma) = \textup{Pic}\,\overline{U},\\

\textup{Im}(1-\sigma) = \langle [2C_{1}^{++}], 2[C_{1}^{-+}], 2[C_{2}^{++}], 2[C_{2}^{-+}], 2[C_{3}^{++}] = 2\textup{Pic}\,\overline{U}, \rangle 
\end{cases}
\end{equation*}
so
$$ \textup{H}^{1}(\Qbb, \textup{Pic}\,\overline{U}) = \textup{H}^{1}(\langle \sigma \rangle, \textup{Pic}\,\overline{U}) = \frac{\textup{Ker}(1+\sigma)}{(1-\sigma)\textup{Pic}\,\overline{U}} = (\Zbb/2\Zbb)^{5}. $$

Now we produce some concrete generators in $\textup{Br}_{1}\,U$ for $\textup{Br}_{1}\,U/\textup{Br}_{0}\,U$. The affine scheme $U \subset \Abb^{3}$ is defined over $\Qbb$ by the equation
\begin{equation}
(x^{2} - 36)(y^{2} - 36)(z^{2} - 36) - m(xyz + C)^{2} - k = 0.
\end{equation}
Here we note that 
$$ \{x^{2} - 36 = 0\} \cap \{(y^{2} - 36)(z^{2} - 36) = 0\}, $$
$$ \{y^{2} - 36 = 0\} \cap \{(x^{2} - 36)(z^{2} - 36) = 0\}, $$
$$ \{z^{2} - 36 = 0\} \cap \{(x^{2} - 36)(y^{2} - 36) = 0\} $$
are closed subsets of codimension $\geq 2$ on $U$. By Grothendieck's purity theorem (\cite[Theorem 6.8.3]{Poo17}), we have the exact sequence
$$ 0 \rightarrow \textup{Br}\,U \rightarrow \textup{Br}\,\Qbb(U) \rightarrow \bigoplus_{D \in U^{(1)}} \textup{H}^{1}(\Qbb(D),\Qbb/\Zbb), $$
where the last map is given by the residue along the codimension-one point $D$. We consider the following quaternion algebras:
$$ \mathcal{A} = (x^{2} - 36, -km), $$
$$ \mathcal{B} = (y^{2} - 36, -km). $$
In order to prove that $\mathcal{A},\mathcal{B}$ come from a class in $\textup{Br}\,U$, it suffices to show that their residues along the irreducible components of the divisors that belong to $\{x^{2} - 36 = 0\}$ and $\{y^{2} - 36 = 0\}$ are all trivial. Indeed, in the residue field of each one of these irreducible divisors, $-km$ is clearly a square; standard formulae for residues in terms of the tame symbol \cite[Example 7.1.5, Proposition 7.5.1]{GS17} therefore show that $\mathcal{A},\mathcal{B}$ are unramified, so they are elements of $\textup{Br}\,U$ and moreover they are clearly algebraic. We also have $$(x^{2} - 36, -km) + (y^{2} - 36, -km) + (z^{2} - 36, -km) = 0$$ in $\textup{Br}_{1}\,U$. The residues of $\mathcal{A},\mathcal{B}$ at the irreducible divisors $D_{1}, D_{2}, D_{3}$ given by $rst = 0$ which form the complement of $U$ in $W$ are easily seen to be trivial. One thus has $\mathcal{A},\mathcal{B} \in \textup{Br}_{1}\,W$.

It is also easy to see that $\mathcal{A}_{1} = (x - 6, -km)$, $\mathcal{A}_{2} = (x + 6, -km)$, $\mathcal{B}_{1} = (y - 6, -km)$, $\mathcal{B}_{2} = (y + 6, -km)$, $\mathcal{C}_{1} = (z - 6, -km)$, and $\mathcal{C}_{2} = (z + 6, -km)$ are elements of $\textup{Br}_{1}\,U$ but not of $\textup{Br}_{1}\,W$. These 6 elements satisfy that $\mathcal{A} = \mathcal{A}_{1} + \mathcal{A}_{2}$ and similarly for $\mathcal{B}, \mathcal{C}$; and $\mathcal{A}_{1} + \mathcal{A}_{2} + \mathcal{B}_{1} + \mathcal{B}_{2} + \mathcal{C}_{1} + \mathcal{C}_{2} = 0$ in $\textup{Br}_{1}\,U$.
The elements $\mathcal{A}_{i}, \mathcal{B}_{i}, \mathcal{C}_{i}$ ($i \in \{1,2\}$) will contribute to the Brauer--Manin obstruction to the integral Hasse principle for $\mathcal{U}$ (the integral model of $U$ defined over $\Zbb$ by the same affine equation) in the next section. 

In conclusion, we have $\textup{Br}_{1}\,W/\textup{Br}_{0}\,W \cong (\Zbb/2\Zbb)^{2}$, which can be seen as a subgroup of $\textup{Br}_{1}\,U/\textup{Br}_{0}\,U \cong (\Zbb/2\Zbb)^{5}$ on using the vanishing of $\textup{H}^{1}(G, \oplus_{i=1}^{3} \Zbb D_{i})$ (and we also have $\textup{Br}_{0}\,W = \textup{Br}_{0}\,U = \textup{Br}\,\Qbb$ since the natural composite map $\textup{Br}\,\Qbb \hookrightarrow \textup{Br}_{1}\,W \hookrightarrow \textup{Br}_{1}\,U$ is injective).
\end{proof}

\begin{remark}
If we compare the results in \cite{Dao24b}, we can see that the explicit elements of the algebraic Brauer group in \cite[Theorem 3.7]{Dao24b} are more complicated than those in Theorem 4.5 above, partly because the Diophantine equation that we consider here contains a part of the form $(x^{2} - A^{2})(y^{2} - A^{2})(z^{2} - A^{2})$ with $A \in \Zbb$. Furthermore, as also mentioned in \cite{Dao24b}, it would be interesting if one can compute the transcendental part of the Brauer group for this family of Markoff-type K3 surfaces like what the authors in \cite{LM20} and \cite{CTWX20} did for Markoff surfaces, which in general should be difficult.
\end{remark}

\section{Brauer--Manin obstruction from quaternion algebras}
Now we consider an explicit family of MK3 surfaces over $\Qbb$. We consider the surfaces in Proposition 3.1 with $$m = m_{0} = 3\ell(1 - \ell) = -36.13 = -468 \;(\ell = -12), C = C_{0} = -4330,$$ and $B = k \in \Zbb$. From now on, we denote by $W_{k}$ the projective MK3 surfaces, $U_{k}$ the affine open subvariety defined by $W_{k} \setminus \{rst = 0\}$ and $\mathcal{U}_{k}$ the integral model of $U_{k}$ defined by the same equation.

\subsection{Existence of local integral points}
First of all, we study the existence of local integral points on the affine MK3 surfaces. Note that now we consider $m_{0} = -468$ and $C_{0} = -4330$.

\begin{proposition}
For $k \in \Zbb$, let $W_{k} \subset \Pbb^{1} \times \Pbb^{1} \times \Pbb^{1}$ be the MK3 surface defined over $\Qbb$ by the $(2,2,2)$-form 
\begin{equation}
    F(x,y,z) = (x^{2} - 36)(y^{2} - 36)(z^{2} - 36) - m_{0}(xyz + C_{0})^{2} - k = 0.
\end{equation}
Let $\mathcal{U}_{k}$ be the integral model of $U_{k}$ defined over $\Zbb$ by the same equation. If $k$ satisfies the condition:
\begin{enumerate}
    \item[$\star$] $k > 0$ and $k \equiv 1$ (\textup{mod} $P$), where $P = 2^{3} \times 3 \times 5 \times 7 \times 11 \times 13 \times 31 \times 433 \times 2017 \times 3253 \times 8501 \times 32687 \times 46649 \times 4057231$,
\end{enumerate}
then $\mathcal{U}_{k}(\textbf{\textup{A}}_{\Zbb}) \not= \emptyset$.
\end{proposition}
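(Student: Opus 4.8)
The plan is to prove \emph{local solubility at every place}, since by definition $\mathcal{U}_{k}(\textbf{\textup{A}}_{\Zbb}) = \prod_{p} \mathcal{U}_{k}(\Zpbb)$ (with $\Zbb_{\infty} = \Rbb$); it therefore suffices to exhibit a point of $\mathcal{U}_{k}(\Zpbb)$ for every prime $p$ together with a real point of $U_{k}(\Rbb)$. For the real place I would collapse the product term by setting $x = 6$, so that $(x^{2} - 36) = 0$ and the equation becomes $-m_{0}(6yz + C_{0})^{2} = k$, i.e. $468(6yz + C_{0})^{2} = k$ since $-m_{0} = 468 > 0$. As $k > 0$ this has the real solution $y = 1$, $z = (\sqrt{k/468} - C_{0})/6$, giving a point of $U_{k}(\Rbb)$.

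For the non-archimedean places I would split into two regimes. For primes $p \nmid P$ the residue of $k$ is unconstrained, so I must produce, for \emph{every} value of $k \bmod p$, a point $(x_{0}, y_{0}, z_{0})$ of $\mathcal{U}_{k}$ over $\Fbb_{p}$ (respectively over $\Zbb/8\Zbb$ when $p = 2$) at which some partial derivative of $F$ is a unit, and then lift it to $\Zpbb$ by Hensel's lemma. For all sufficiently large $p$ this is automatic: the reduction of $W_{k}$ is a $(2,2,2)$-surface in $(\Pbb^{1})^{3}$, so by the Weil/Lang--Weil estimates it has $p^{2} + O(p^{3/2})$ points over $\Fbb_{p}$, of which only $O(p)$ lie on the divisor $\{rst = 0\}$ or in the (at most zero-dimensional) singular locus; hence a smooth affine $\Fbb_{p}$-point exists once $p$ exceeds an explicit bound, uniformly in $k$. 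This leaves only the finitely many small primes outside $P$, which I would dispatch by a direct finite check that a smooth point exists for each residue class of $k$.

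For the primes $p \mid P$ — the \emph{engineered} primes — I would use the congruence $k \equiv 1 \pmod{p^{v_{p}(P)}}$ to pin the reduction of $F$ and then write down an explicit $\Zpbb$-point. When $13$ is a square in $\Qpbb$ the substitution $x = 6$ from the real place already works, since the equation reduces to $468(6yz + C_{0})^{2} = k$ and $k/468$ is then a square in $\Zpbb$. At the remaining primes (including the small primes $p \in \{2,3,5,7,11,13\}$, where $13$ is a nonsquare or $v_{p}(468) > 0$, and the larger divisors of $P$) one instead exhibits a smooth solution of the pinned equation directly: viewing $F = 0$ as the quadratic $A z^{2} + B z + D$ in $z$ with $A = (x^{2} - 36)(y^{2} - 36) - m_{0}x^{2}y^{2}$, I would choose $x, y$ so that its discriminant is a square in $\Zpbb$ and $\partial F/\partial z$ is a unit, then lift by Hensel (working modulo $8$ when $p = 2$).

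The main obstacle is precisely the set of primes dividing $P$: there the uniform point-count is either unavailable or insufficient, and one must verify by hand that the pinned value $k \equiv 1$ forces the auxiliary discriminant (a polynomial in $x, y$ and $k$) to represent a local square. The large primes appearing in $P$, such as $4057231$, reflect exactly this phenomenon — they arise as prime factors of the resultants and discriminants that govern when the relevant slice acquires an $\Fbb_{p}$-point, and checking solubility at these primes, rather than the routine archimedean and large-$p$ cases, is the crux of the argument.
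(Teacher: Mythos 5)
Your overall skeleton (a $\Zpbb$-point for every prime $p$ plus a real point) matches the paper's, and two of your devices are sound: the real point via $x = 6$, which collapses the equation to $468(6yz + C_{0})^{2} = k$, is correct and simpler than the paper's two-case treatment; and at odd primes $p \mid P$ with $p \nmid 468$ and $13$ a square mod $p$, the same substitution does produce a genuine $\Zpbb$-point, since $k \equiv 1 \pmod{p}$ makes $k/468$ a square in $\Zpbb$ exactly in that case. But there are genuine gaps at the two decisive places. First, the Lang--Weil step for $p \nmid P$ is asserted, not proved: the estimate $\#W_{k}(\Fbb_{p}) = p^{2} + O(p^{3/2})$ requires the reduction of $F$ to be geometrically integral for \emph{every} residue of $k$ mod $p$, and your parenthetical claim that the singular locus is at most zero-dimensional is false in general --- since $(2,2,2) = 2\cdot(1,1,1)$, a $(2,2,2)$-form can in principle degenerate to a constant times the square of a $(1,1,1)$-form, in which case the fiber has no smooth points at all; ruling such degenerations out uniformly in $k$ is real work your sketch omits. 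Second, at the primes dividing $P$ where $13$ is a nonsquare (and you cannot know the quadratic character of $13$ modulo $433, 2017, 3253, \dots, 4057231$ without computing it), your fallback --- ``choose $x, y$ so that the discriminant of the quadratic in $z$ is a square in $\Zpbb$ and $\partial F/\partial z$ is a unit'' --- is precisely the statement that needs proof; for a prime like $4057231$, existence of such $(x,y)$ requires a character-sum or point-counting argument that is never supplied. These deferred ``direct checks'' are the crux, not routine bookkeeping.

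The paper closes both gaps with a single uniform device your proposal lacks: for every $p \geq 11$, $p \neq 13$ --- \emph{including all the large divisors of $P$} --- it slices by $z = 0$, compactifies the slice in $\Pbb^{1} \times \Pbb^{1}$, checks that the resulting $(2,2)$-curve is smooth of genus $1$ whenever $p \nmid m_{0}C_{0}^{2} + k$ and $p \nmid m_{0}C_{0}^{2} + k + 36^{3}$, and applies Hasse--Weil: $(\sqrt{p} - 1)^{2} \geq 5$ points beat the at most $4$ points at infinity, so a smooth affine $\Fpbb$-point exists and lifts by Hensel; when one of those divisibilities fails, it switches to the slice $z = 1$, whose smoothness is exactly what the congruence $\star$ guarantees (together with $433 \mid C_{0}$, which forces $p \neq 433$ in the bad case). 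The small primes $2, 3, 5, 7, 13$ are dispatched by the explicit nonsingular points $(1,1,1)$ mod $8$, $(1,1,1)$, $(0,2,2)$, $(2,3,2)$, $(0,1,3)$. So your picture of the engineered primes is slightly off: at the large divisors of $P$ the uniform point count is available --- it is the genus-$1$ slice bound, valid for all $k$ --- and the congruences mod $P$ serve only to keep the fallback slice smooth and the small-prime solutions liftable. Your plan becomes a proof only after supplying the geometric-integrality and singular-locus analysis for large $p$ and an actual existence argument at each divisor of $P$, i.e., after doing what the slice-plus-Hasse--Weil argument already does.
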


\begin{proof}
For the place at infinity, it is clear that there exists a real point on $U_{k}$:
\begin{enumerate}
	\item[(a)] If $k = 1$, then we find a real solution $(x,y,z)$ such that $xyz = 4330$ and $y = z$. We obtain the following system of equations: $xy^{2} = 4330$ and $(x^{2} - 36)(y^{2} - 36)^{2} = 1$. Equivalently, we have $x = 4330/y^{2}$ ($y \neq 0$) and $36y^{4}(y^{2} - 36)^{2} + y^{4} - 4330(y^{2} - 36)^{2} = 0$. The latter equation has a real solution $y \in (0, +\infty)$ by the intermediate value theorem, so we obtain a real point $(x,y,z) = (x,y,y)$ on $U$.
	
	\item[(b)] If $k > 1$ then $k > 2^3 \times 3 \times 5 \times 11 \times 13 \times 31 \time 433 \times 2017 = 1084050607640 > 8774438544 = 13 \times 36 \times 4330^2 - 36^3$, so we have $(x,y,z) = (\sqrt{k - 8774438544},0,0) \in U_{k}(\Rbb)$.
\end{enumerate}

For the existence of local integral points at finite places $p$ on $\mathcal{U}_{k}$, we have:
\begin{enumerate}
	\item[(i)] Prime powers of $p = 2$: It is clear that every solution modulo $2$ is singular. Thanks to the condition $\star$, we find the solution $(1,1,1)$ mod $8$ (with $F'_{x} \equiv 2$ mod $8$), which then lifts to a $2$-adic integer solution by Hensel's lemma.
	
	\item[(ii)] Prime powers of $p \in \{3, 5, 7, 13\}$: Thanks to the condition $\star$, we find the non-singular solution $(1,1,1)$ for $p = 3$, $(0,2,2)$ for $p = 5$, $(2,3,2)$ for $ p = 7$, and $(0,1,3)$ for $p = 13$, which then respectively lift to a $3$-adic and $5$-adic integer solution by Hensel's lemma (with respect to $x$, fixing $y,z$).
	
	\item[(iii)] Prime powers of $p \geq 11$ ($p \neq 13$): For simplicity, we fix $z = 0$ over $\Zpbb$. The equation becomes
	$$ F(x,y,0) = -36(x^{2}-36)(y^{2}-36) - m_{0}.C_{0}^{2} - k = 0 $$
	which defines an affine curve $C \subset \Abb^{2}_{(x,y)}$ over $\Fpbb$. First we consider its projective closure in $\Pbb^{2}_{[x:y:t]}$ defined by
	$$ -36(x^{2}-36t^{2})(y^{2}-36t^{2}) - (m_{0}C_{0}^{2} + k)t^{4} = 0. $$
	If $p$ does not divide either $m_{0}C_{0}^{2} + k$ or $m_{0}C_{0}^{2} + k + 36^{3}$, then the projective curve has only two singularities which are \emph{ordinary} of multiplicity $2$, namely $[1:0:0]$ and $[0:1:0]$. By the genus--degree formula and the fact that the geometric genus is a birational invariant, we obtain
	$$ g(C) = \frac{(\deg C - 1)(\deg C - 2)}{2} - \sum_{i = 1}^{n} \frac{r_{i}(r_{i}-1)}{2}, $$
	where $n$ is the number of ordinary singularities and $r_{i}$ is the multiplicity of each singularity for $i = 1,\dots,n$; in particular, $g(C) = 3 - 2 = 1$.
	
	Now we consider the original projective closure $C^{1} \subset \Pbb^{1}_{[x:r]} \times \Pbb^{1}_{[y:s]}$ defined by
	$$ -36(x^{2}-36r^{2})(y^{2}-36s^{2}) - (m_{0}C_{0}^{2} + k)r^{2}s^{2} = 0. $$
	If $p$ does not divide either $m_{0}C_{0}^{2} + k$ or $m_{0}C_{0}^{2} + k + 36^{3}$, then the projective curve $C^{1}$ is \textbf{smooth} over $\Fpbb$ under our assumption on $k$. Then by the Hasse--Weil bound for smooth, projective and geometrically integral curves of genus $1$, we have
	$$ |C^{1}(\Fpbb)| \geq p + 1 - 2\sqrt{p} = (\sqrt{p} - 1)^{2}, $$
	so $|C^{1}(\Fpbb)| \geq 5$ as $p \geq 11$. As $C^{1}$ has exactly $4$ points at infinity (when $rs = 0$), the affine curve $C$ has at least one \emph{smooth} $\Fpbb$-point which then lifts to a $p$-adic integral point by Hensel's lemma.
	\\~\\
	\indent Next, consider the case when $p$ divides $m_{0}C_{0}^{2} + k$ or $m_{0}C_{0}^{2} + k + 36^{3}$. Since $433$ divides $C_{0}$ and $k \equiv 1$ mod $433$, we have $p \neq 433$. We will fix instead $z=1$ over $\Zpbb$. The equation becomes
	$$ F(x,y,1) = -35(x^{2}-36)(y^{2}-36) - k - m_{0}(xy+C_{0})^{2} = 0 $$
	which defines an affine curve $D \subset \Abb^{2}_{(x,y)}$ over $\Fpbb$. If we consider its projective closure in $\Pbb^{2}_{[x:y:t]}$ defined by
	$$ -35(x^{2}-36t^{2})(y^{2}-36t^{2}) - kt^{4} - m_{0}(xy+C_{0}t^{2})^{2} = 0, $$
	then the projective curve also has only two singularities which are \emph{ordinary} of multiplicity $2$, namely $[1:0:0]$ and $[0:1:0]$. By the genus--degree formula and the fact that the geometric genus is a birational invariant, we obtain
	$$ g(D) = \frac{(\deg D - 1)(\deg D - 2)}{2} - \sum_{i = 1}^{n} \frac{r_{i}(r_{i}-1)}{2}, $$
	where $n$ is the number of ordinary singularities and $r_{i}$ is the multiplicity of each singularity for $i = 1,\dots,n$; in particular, $g(D) = 3 - 2 = 1$.
	
	Now we consider the original projective closure $D^{1} \in \Pbb^{1}_{[x:r]} \times \Pbb^{1}_{[y:s]}$ defined by
	$$ -35(x^{2}-36r^{2})(y^{2}-36s^{2}) - kr^{2}s^{2} - m_{0}(xy+C_{0}rs)^{2} = 0. $$
	The projective curve $D^{1}$ is \textbf{smooth} over $\Fpbb$ under the condition $\star$. Then by the Hasse--Weil bound for smooth, projective and geometrically integral curves of genus $1$, we have $|D^{1}(\Fpbb)| \geq 5$ since $p \geq 11$. As $D^{1}$ has at most $4$ points at infinity (when $rs = 0$), the affine curve $D$ has at least one \emph{smooth} $\Fpbb$-point which then lifts to a $p$-adic integral point by Hensel's lemma. Our proof is now complete.
\end{enumerate}
\end{proof}

\begin{remark}
Under the assumptions of Proposition 5.1, we know from Proposition 3.2 that there exists a Zariski-dense set of $\Qbb$-points at infinity (when $rst = 0$) on these MK3 surfaces.
\end{remark}

\subsection{Integral Brauer--Manin obstruction}
It is important to recall that there always exist $\Qbb$-points (at infinity) on every member $W_{k} \subset \Pbb^{1} \times \Pbb^{1} \times \Pbb^{1}$ of the above family of Markoff-type K3 surfaces, hence they satisfy the (rational) Hasse principle. Now we prove the Brauer--Manin obstruction to the integral Hasse principle for the integral model $\mathcal{U}_{k}$ of the affine subvariety $U_{k} \subset W_{k}$ by calculating the local invariants for some quaternion algebra classes $\mathcal{A}$ in the Brauer group of $U_{k}$:
$$ \textup{inv}_{p}\,\mathcal{A} : \mathcal{U}_{k}(\Zpbb) \rightarrow \Zbb/2\Zbb, \hspace{1cm} \textbf{u} = (x,y,z) \mapsto \textup{inv}_{p}\,\mathcal{A}(\textbf{u}). $$

\begin{theorem}
For $k \in \Zbb$, let $W_{k} \subset \Pbb^{1} \times \Pbb^{1} \times \Pbb^{1}$ be the MK3 surface defined over $\Qbb$ by the $(2,2,2)$-form 
\begin{equation}
    F(x,y,z) = (x^{2} - 36)(y^{2} - 36)(z^{2} - 36) - m_{0}(xyz + C_{0})^{2} - k = 0.
\end{equation}
Let $\mathcal{U}_{k}$ be the integral model of $U_{k}$ defined over $\Zbb$ by the same equation. If $k$ satisfies the condition:
\begin{enumerate}
    \item $k = \ell^{2}$ where $\ell \in \Zbb$ and $\ell > 0$ such that $\ell \equiv 1$ (\textup{mod} $P$), where $P = 2^{3} \times 3 \times 5 \times 7 \times 11 \times 13 \times 31 \times 433 \times 2017 \times 3253 \times 8501 \times 32687 \times 46649 \times 4057231$;
    \item $(p,13)_{p} = 0$ for any prime divisor $p$ of $\ell$,
\end{enumerate}
then $\mathcal{U}_{k}(\textbf{\textup{A}}_{\Zbb}) \not= \emptyset$ and there is an algebraic Brauer--Manin obstruction to the integral Hasse principle for $\mathcal{U}_{k}$, i.e., $\mathcal{U}_{k}(\Zbb) \subset \mathcal{U}_{k}(\textbf{\textup{A}}_{\Zbb})^{\textup{Br}_{1}} = \emptyset$.
\end{theorem}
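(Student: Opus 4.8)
The plan is to deduce local solubility directly from Proposition 5.1, and then to show that the Brauer--Manin set is empty by computing the local invariants of the quaternion classes of Theorem 4.5 and checking that the obstruction localizes entirely at the prime $13$. For the first point, note that $\ell \equiv 1 \pmod P$ forces $k = \ell^{2} \equiv 1 \pmod P$ and $k = \ell^{2} > 0$, which is exactly hypothesis $\star$ of Proposition 5.1; hence $\mathcal{U}_{k}(\textbf{\textup{A}}_{\Zbb}) \neq \emptyset$. Everything then reduces to proving $\mathcal{U}_{k}(\textbf{\textup{A}}_{\Zbb})^{\textup{Br}_{1}} = \emptyset$.

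The starting observation is that $-km = 468\,\ell^{2} = 36\cdot 13\cdot \ell^{2}$, so $-km = 13$ in $\Qbb^{\times}/\Qbb^{\times 2}$. Thus the generators $\mathcal{A}_{1}=(x-6,-km),\dots,\mathcal{C}_{2}=(z+6,-km)$ of $\textup{Br}_{1}\,U_{k}/\textup{Br}_{0}$ from Theorem 4.5 may be taken to be $(x\pm 6,13),\,(y\pm 6,13),\,(z\pm 6,13)$, and their local invariants are $\tfrac12$ times the corresponding Hilbert symbols $(\,\cdot\,,13)_{p}$. The key identity on the surface is that $N := (x^{2}-36)(y^{2}-36)(z^{2}-36) = k + m_{0}(xyz+C_{0})^{2} = \ell^{2} - 13w^{2}$ with $w := 6(xyz+C_{0})\in\Zbb$; in particular $N$ is a norm from $\Qbb(\sqrt{13})$, which both explains the relation $\mathcal{A}+\mathcal{B}+\mathcal{C}=0$ and is the engine for the place-by-place vanishing.

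I would then show each invariant vanishes at every place except $p=13$. At the real place $13>0$ makes all symbols trivial. At $p=2$, reducing the equation modulo $2$ gives $(xyz)^{2}\equiv k\equiv 1$, so $x,y,z$ are odd; since $13\equiv 5 \pmod 8$ one has $(u,13)_{2}=1$ for every odd unit $u$, so each shift $x\pm 6,\dots$ contributes $0$. For an odd prime $p\neq 13$ with $p\nmid\ell$, if $\left(\frac{13}{p}\right)=1$ the symbol is automatically trivial, while if $\left(\frac{13}{p}\right)=-1$ then $p\mid(x-6)$ would force $p\mid N=\ell^{2}-13w^{2}$, hence either $p\mid\ell$ or $13$ is a square mod $p$, both excluded; so all shifts are $p$-adic units and the symbols vanish. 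Finally, for $p\mid\ell$ (note $\gcd(\ell,P)=1$, so $p\neq 13$), condition $(2)$ reads $(p,13)_{p}=\left(\frac{13}{p}\right)=0$, i.e.\ $13$ is a square mod $p$, again killing every symbol.

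Consequently an adelic integral point lies in $\mathcal{U}_{k}(\textbf{\textup{A}}_{\Zbb})^{\textup{Br}_{1}}$ if and only if its $13$-component makes all six symbols at $13$ trivial. Since $N\equiv \ell^{2}\equiv 1 \pmod{13}$, each factor $x^{2}-36,\,y^{2}-36,\,z^{2}-36$ is a $13$-adic unit and $(f,13)_{13}=\left(\frac{f}{13}\right)$, so the requirement is that all of $\left(\frac{x\pm 6}{13}\right),\left(\frac{y\pm 6}{13}\right),\left(\frac{z\pm 6}{13}\right)$ equal $+1$. Here I would carry out the decisive residue check: because $m_{0}\equiv 0$ and $k\equiv 1 \pmod{13}$, the reduction satisfies $(x^{2}-36)(y^{2}-36)(z^{2}-36)\equiv 1 \pmod{13}$, yet $x-6$ and $x+6$ are simultaneously nonzero squares mod $13$ only when $x\equiv \pm 3$, in which case $x^{2}-36\equiv -1 \pmod{13}$ (and likewise for $y,z$). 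The six square conditions would therefore force $N\equiv(-1)^{3}\equiv -1\pmod{13}$, contradicting $N\equiv 1$. Hence no local point at $13$ survives, so $\mathcal{U}_{k}(\textbf{\textup{A}}_{\Zbb})^{\textup{Br}_{1}}=\emptyset$ and $\mathcal{U}_{k}(\Zbb)=\emptyset$. The hard part is the place-by-place localization of the obstruction onto $13$ — in particular eliminating sporadic odd primes dividing some shift $x\pm 6$, which is where the norm relation $N=\ell^{2}-13w^{2}$ and condition $(2)$ are essential — after which the numerical incompatibility $(-1)^{3}\not\equiv 1 \pmod{13}$ closes the argument.
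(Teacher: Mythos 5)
Your proposal is correct and follows essentially the same route as the paper: local solubility via Proposition 5.1 (since $\ell \equiv 1 \pmod P$ gives $k = \ell^{2} \equiv 1 \pmod P$, $k>0$), then the quaternion classes $(x\pm 6,13), (y\pm 6, 13), (z\pm 6,13)$ with invariants shown to vanish at $\infty$, $2$, and all odd $p \neq 13$ (using the norm identity $N = \ell^{2}-13w^{2}$ and condition (2) for $p \mid \ell$), and forced nonvanishing at $p=13$, giving total invariant $\tfrac{1}{2}$. Your explicit mod-$13$ check (that $x-6$ and $x+6$ are both nonzero squares mod $13$ only for $x \equiv \pm 3$, whence each factor $x^{2}-36 \equiv -1$ and the product would be $-1 \not\equiv 1 \pmod{13}$) is accurate and in fact fills in the step the paper compresses into ``computation modulo $13$ shows.''
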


\begin{proof}
Since $k$ satisfies $\star$, we have the result on the algebraic Brauer group of $U_{k}$ by Theorem 4.7 and the existence of local integral points on $\mathcal{U}_{k}$ by Proposition 5.1. For any local point in $\mathcal{U}_{k}(\textbf{A}_{\Zbb})$, we calculate its local invariants at every prime $p \leq \infty$. First of all, note that $U_{k}$ is smooth over $\Qbb$ and the affine equation 
$$ (x^{2} - 36)(y^{2} - 36)(z^{2} - 36) = \ell^{2} - 13.36(xyz - 4330)^{2} $$ 
implies $$(x^{2} - 36, 13) + (x^{2} - 36, 13) + (x^{2} - 36, 13) = 0$$ in $\textup{Br}_{1}\,U_{k}/\textup{Br}_{0}\,U_{k}$. Now by abuse of notation, at each place $p$ of $\Qbb$, we consider a local point denoted by $(x,y,z)$.

At $p = \infty$: Since $13 > 0$, for any $(x,y,z) \in U(\Rbb)$, we have $\textup{inv}_{\infty}\,(x \pm 6, 13)_{\infty} = 0$.

At $p = 2$: Since $k \equiv 1$ mod $8$ and $m_{0} \equiv 4$ mod $8$, all of the coordinates $x,y,z$ are in $\Zbb_{2}^{\times}$. Furthermore, $13 \equiv 1$ mod $4$, so $(x \pm 6, 13)_{2} = 0$.

At $p = 3$: Since $k \equiv 1$ mod $3$, all of the coordinates $x,y,z$ are in $\Zbb_{3}^{\times}$, so $(x \pm 6, 13)_{3} = 0$.

At $p = 13$: Since $(x^{2}-36)(y^{2}-36)(z^{2}-36) \equiv 1$ mod $13$, computation modulo $13$ shows that there must exist at least one coordinate, without loss of generation (WLOG) let it be $x$, such that $(x - 6, 13)_{13} = 1/2$ or $(x + 6, 13)_{13} = 1/2$.

At $p \geq 5$ and $p \neq 13$: Since $k = \ell^{2}$ and every odd prime divisor $p$ of $\ell$ satisfies $(13,p)_{p} = 0$, we find that $(x \pm 6, 13)_{p} = 0$.

In conclusion, we have 
$$ \sum_{p \leq \infty} (x - 6, 13)_{p} = \frac{1}{2} \not= 0 \hspace{0.5cm}\textup{or}\hspace{0.5cm} \sum_{p \leq \infty} (x + 6, 13)_{p} = \frac{1}{2} \not= 0, $$
so $\mathcal{U}_{k}(\Zbb) \subset \mathcal{U}_{k}(\textbf{\textup{A}}_{\Zbb})^{\textup{Br}_{1}} = \emptyset$.
\end{proof}

\begin{example}
By taking $k = 1$, we have a family of MK3 surfaces that contains a Zariski-dense set of $\Qbb$-points and do not contain any $\Zbb$-point due to the Brauer--Manin obstruction. The affine equation of these MK3 surfaces is as follows.
$$ (x^{2} - 36)(y^{2} - 36)(z^{2} - 36) + 13.36(xyz - 4330)^{2} - 1 = 0. $$
\end{example}

\subsection{Counting the Hasse failures}
Recall that in the case of Markoff surfaces, we see from \cite{LM20} that the number of counterexamples to the integral Hasse principle which can be explained by the Brauer--Manin obstruction is asymptotically equal to $M^{1/2}/(\log M)^{1/2}$; by \cite{CTWX20}, this number is also the asymptotic lower bound for the number of Markoff surfaces such that there is no Brauer--Manin obstruction to the integral Hasse principle, which is slightly better than the result $M^{1/2}/\log M$ in \cite{LM20}.

In this part, we study the number of examples where local integral points exist as well as the number of counterexamples to the integral Hasse principle for our Markoff-type K3 surfaces which can be explained by the Brauer--Manin obstruction. More precisely, we calculate the natural density of $k \in \Zbb$ satisfying the hypotheses of Proposition 5.1 and Theorem 5.2.

\begin{theorem}
For the above family of MK3 surfaces, we have
	$$ \# \{k \in \Zbb: |k| \leq M,\ \mathcal{U}_{k}(\textbf{\textup{A}}_{\Zbb}) \not= \emptyset \} \asymp M $$
	and
	$$ \# \{k \in \Zbb: |k| \leq M,\ \mathcal{U}_{k}(\textbf{\textup{A}}_{\Zbb}) \not= \emptyset,\ \mathcal{U}_{k}(\textbf{\textup{A}}_{\Zbb})^{\textup{Br}} = \emptyset \} \gg \frac{M^{1/2}}{\textup{log}\,M}, $$
as $M \rightarrow +\infty$.
\end{theorem}

\begin{proof}
This proof is similar to that of \cite[Theorem 4.8]{Dao24b}.

For the first estimate, the result follows directly from the fact that the hypothesis of Proposition 5.1 only gives finitely many congruence conditions on $k$, so the total numbers of $k$ are approximately a proportion of $M$ as $M \rightarrow +\infty$.

For the second estimate, following the hypothesis of Theorem 5.2, we only give an asymptotic lower bound by considering the case when $\ell$ is a \textbf{prime}. The result follows from the fact that if $k$ satisfies the hypothesis of Theorem 5.2, then $|k| = \ell^{2} \leq M$ and as $M \rightarrow +\infty$, the number of primes $\ell < \sqrt{M}$ that satisfy finitely many congruence conditions is asymptotically equal to $\ds\frac{\sqrt{M}}{\log M}$ (see \cite[Section 7.9]{Apo76}).
\end{proof}

\begin{remark}
The second counting result can be improved by applying Lemma 5.19 in \cite{LM20}. We expect that the lower bound can be improved further with some suitable changes to the conditions on $k$ in Proposition 5.1 and Theorem 5.2. The most significant improvement should be about the power of $M$ on the numerator in the counting result if possible. The fact that the current power of $M$ equals $1/2$ is related to the choice of $k$ to be a \textbf{quadratic} function of $\ell$ and the counting is implemented on $\ell$. Furthermore, the \emph{algebraic} Brauer group modulo the constant Brauer group of the MK3 surfaces $U_{k}$ in question is isomorphic to $(\Zbb/2\Zbb)^{5}$ and generated by classes of \emph{quartenion algebras}, so working with quadratic functions such as squares is more convenient.

It would be interesting if we can compute the \emph{transcendental} Brauer group of $U_{k}$, which would help us consider the Brauer--Manin set with respect to the whole Brauer group instead of only its algebraic part. If we can at least find some explicit elements of \emph{odd} order (if they exist) in the transcendental Brauer group, then we may obtain some different power of $M$ that contributes to the counting result.
\end{remark}

\section{Further remarks}
In this section, we consider the integral Hasse principle and strong approximation for general MK3 surfaces as introduced in Section 2.2. The equation of MK3 surfaces that we study in this paper is more general in some sense than that of MK3 surfaces in \cite{Dao24b}, but it is not the most general since it depends on only \emph{one} parameter.

\subsection{Brauer groups and Picard groups}
We prove that it is possible to find some explicit elements in the algebraic Brauer group for a family of MK3 surfaces with more parameters.

Indeed, for $a, b, c, d, e \in \Zbb$, let $W \subset \Pbb^{1} \times \Pbb^{1} \times \Pbb^{1}$ be a \emph{smooth} MK3 surface defined over $\Qbb$ by the $(2,2,2)$-form 
$$ F(x, y, z) = ax^{2}y^{2}z^{2} + b(x^{2}y^{2} + x^{2}z^{2} + y^{2}z^{2}) + cxyz + d(x^{2} + y^{2} + z^{2}) + e = 0. $$
Let $\mathcal{U}$ be the integral model of $U$ defined over $\Zbb$ by the same equation. Assume further that $a, b, c, d, e$ are all \textbf{nonzero} and $ad \neq b^{2}$ so that we can rewrite the equation as follows. Note that if $W$ is a \emph{non-degenerate} MK3 surface, then we have $ad - b^{2} \neq 0, d^{2} - be \neq 0$, and $c \neq 0$ (see Section 2.2).

\begin{equation}
\begin{aligned}
&(abd - b^{3})x^{2}y^{2}z^{2} + (bx^{2} + d)(by^{2} + d)(bz^{2} + d) + bcdxyz + bde - d^{3} = 0\\
\Leftrightarrow &(bx^{2} + d)(by^{2} + d)(bz^{2} + d) + (abd - b^{3})\left(x^{2}y^{2}z^{2} + \frac{bcd}{abd - b^{3}}xyz\right) = d^{3} - bde\\ 
\Leftrightarrow &(bx^{2} + d)(by^{2} + d)(bz^{2} + d) + (abd - b^{3})\left(xyz + \frac{bcd}{2(abd - b^{3})}\right)^{2} = d^{3} - bde - \frac{b^{2}c^{2}d^{2}}{4(abd - b^{3})}\\
\Leftrightarrow &4(abd - b^{3})(bx^{2} + d)(by^{2} + d)(bz^{2} + d) + (2(abd - b^{3})xyz + bcd)^{2} = 4(abd - b^{3})(d^{3} - bde) - b^{2}c^{2}d^{2}\\
\Leftrightarrow &A(bx^{2} + d)(by^{2} + d)(bz^{2} + d) = k - (2(abd - b^{3})xyz + bcd)^{2},
\end{aligned}
\end{equation}
where $A = 4(abd - b^{3}) = 4b(ad - b^{2})$ and $k = 4(abd - d^{3})(d^{3} - bde) - b^{2}c^{2}d^{2} = 4bd(ad - b^{2})(d^{2} - be) - b^{2}c^{2}d^{2}$.

Following the same method in the paper, we can find some explicit elements in the algebraic Brauer group of $W$ given by the classes of $(bx^{2} + d, k)$, $(by^{2} + d, k)$, and $(bz^{2} + d, k)$. We can use them to study the Brauer--Manin obstruction for integral points on $\mathcal{U}$. Another natural question is how to compute the whole algebraic Brauer groups of $W$ and $U$. For that, we need to compute the geometric Picard groups of $W$ and $U$ if we follow the same method as above.

By Proposition 4.3, if $a \equiv 0, b \equiv -1, c \equiv d \equiv 1, e \equiv 4$ (mod 7) then the affine equation of $W$ is the same as the equation considered in this proposition after taking congruence modulo $7$. Therefore, the Picard numbers of $\overline{W}$ and $\overline{U}$ are equal to $8$ and $5$, respectively. Following the same method, we can try to compute the algebraic Brauer groups of $W$ and $U$ as well as study the Brauer--Manin obstruction to the integral Hasse principle for $\mathcal{U}$.

\subsection{Failure of strong approximation}
We keep the notation as in the previous part. Assume that $a, b, c, d, e$ are all nonzero and that $W$ is non-degenerate. We also assume that $a \equiv 0, b \equiv -1, c \equiv d \equiv 1, e \equiv 4$ (mod 7) (so we have the above result about the Picard numbers of $W$ and $U$). For the family of MK3 surfaces $W$ defined by $F = 0$, from Section 3 we can see that the divisor $K + D$ is \emph{big}, where $K$ is the (trivial) canonical divisor on $W$ and $D := D_{1} + D_{2} + D_{3}$ is an \emph{ample} divisor. Therefore, $U = W \setminus D$ is of \emph{log general type}, and Vojta's Conjecture asserts that integral points on $\mathcal{U}$ are \emph{not Zariski-dense} (see \cite[Section 1.2]{Cor16}).

By the proof of \cite[Lemma 1.2]{Rap13}, if the affine $\Qbb$-variety $U$ has strong approximation off $\{\infty\}$, $\mathcal{U}(\Zbb)$ must be Zariski-dense in $U$. We now prove that integral points on $\mathcal{U}$ are not Zariski-dense as predicted by Vojta's Conjecture. As a result, strong approximation off $\{\infty\}$ fails on $\mathcal{U}$, and so does strong approximation.

\begin{proposition}
$\mathcal{U}(\Zbb)$ is not Zariski-dense in $U$.
\end{proposition}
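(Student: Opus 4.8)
The plan is to prove the sharper statement that $\mathcal{U}(\Zbb)$ is contained in a finite union of fibral curves on $U$, from which non-density is immediate. The decisive structural input is the standing hypothesis $a \neq 0$ (in contrast with the original Markoff surfaces, where $a = b = 0$): the presence of the leading monomial $ax^{2}y^{2}z^{2}$ in $F$ is exactly what prevents integral points from running off to infinity in more than one coordinate at once.

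First I would prove a bounding lemma: there is a constant $B$, depending only on $a,b,c,d,e$, such that every integral point $(x,y,z) \in \mathcal{U}(\Zbb)$ has $\min(|x|,|y|,|z|) \leq B$. To see this, regard $F$ as a quadratic in $z$, writing $F = \alpha z^{2} + \beta z + \gamma$ with $\alpha = ax^{2}y^{2} + b(x^{2}+y^{2}) + d$, $\beta = cxy$, and $\gamma = bx^{2}y^{2} + d(x^{2}+y^{2}) + e$. If $|x|$ and $|y|$ are both large then, since $a \neq 0$, one has $|\alpha| \geq \tfrac{1}{2}|a|\,x^{2}y^{2} > 0$, while $|\beta|/|\alpha| \to 0$ and $|\gamma|/|\alpha|$ remains bounded (it tends to $|b/a|$). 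The Cauchy bound on the roots of a quadratic then gives $|z| \leq 1 + |\beta|/|\alpha| + |\gamma|/|\alpha| \leq C$ for an explicit constant $C$. Taking $B = \max(B_{0}, C)$, where $B_{0}$ is the threshold past which the above estimates hold, one sees that no integral point can have all three coordinates exceeding $B$: if $|x|, |y| > B$, the estimate forces $|z| \leq C \leq B$. As $F$ is symmetric in $x,y,z$, the same conclusion holds for any pair of coordinates, so the smallest coordinate is always at most $B$.

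Given the lemma, the conclusion follows at once. Every integral point lies on one of the finitely many fibres $\{x = x_{0}\}$, $\{y = y_{0}\}$, $\{z = z_{0}\}$ with $x_{0}, y_{0}, z_{0} \in \Zbb$ and $|x_{0}|, |y_{0}|, |z_{0}| \leq B$. Each such fibre is cut out on $W$ by a $(2,2)$-form (for example $F(x,y,z_{0}) = 0$), hence is a genuine curve on the surface $U$ — indeed it is a fibre of one of the genus-$1$ fibrations $\pi_{i}$. There being only finitely many of them, $\mathcal{U}(\Zbb)$ is contained in a proper Zariski-closed subset of the geometrically integral surface $U$, and is therefore not Zariski-dense. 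Note that Siegel's theorem is not needed here: we only confine the integral points to finitely many curves, rather than bounding the points on each curve.

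The main obstacle is the bookkeeping inside the bounding lemma. One must check that the leading coefficient $\alpha$ does not vanish when two coordinates are large (immediate from $a \neq 0$, since $\alpha = ax^{2}y^{2} + \ldots$), make the constants $B_{0}$ and $C$ explicit in terms of $a,b,c,d,e$, and note that the finitely many degenerate fibres — those $(2,2)$-forms $F(x,y,z_{0})$ that happen to be reducible or singular — still define curves and so cause no harm. Conceptually nothing deeper is required than the observation that $a \neq 0$ bounds the smallest coordinate of any integral point; this is precisely the feature that fails for the classical Markoff equation and explains why that case behaves so differently.
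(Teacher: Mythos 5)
Your proof is correct, and it takes a genuinely different route from the paper's. The paper proves the stronger statement that $\mathcal{U}(\Zbb)$ is \emph{finite}: it rewrites the defining equation in the factored form $A(bx^{2}+d)(by^{2}+d)(bz^{2}+d) = k - (2(abd-b^{3})xyz + bcd)^{2}$ and splits into cases according to whether $|xyz|$ is zero, bounded, or unbounded; the triangle inequality forces $|xyz|$ (and hence each coordinate) to be bounded when $xyz \neq 0$, while the case $xyz = 0$ reduces to the equation $Ad(bx^{2}+d)(by^{2}+d) = k - b^{2}c^{2}d^{2}$ with nonzero right-hand side, which has finitely many integer solutions. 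This exploits all five coefficients $a,b,c,d,e$ being nonzero. You instead bound only $\min(|x|,|y|,|z|)$, by viewing $F$ as a quadratic in the remaining variable and applying the elementary root bound to $\alpha z^{2}+\beta z+\gamma$ with $|\alpha| \geq \tfrac{1}{2}|a|x^{2}y^{2}$ once the other two coordinates are large; this confines $\mathcal{U}(\Zbb)$ to finitely many fibres of the projections $\pi_{i}$, which is weaker than finiteness but is all the proposition asserts. What your approach buys is economy of hypotheses: it uses only $a \neq 0$ (plus knowing the fibres are honest curves), so it applies to surfaces in the family that the paper's factored identity does not directly handle; what the paper's approach buys is the sharper conclusion of finiteness, obtained without invoking the fibration structure, and without any appeal (even implicit) to results on curves. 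One small point you should make explicit rather than wave at: a fibre $\{z = z_{0}\}$ is a genuine curve because $F(x,y,z_{0})$ cannot vanish identically --- its $xy$-coefficient is $cz_{0}$ with $c \neq 0$, and for $z_{0} = 0$ its $x^{2}y^{2}$-coefficient is $b \neq 0$ under the standing hypotheses --- so your finitely many fibres do form a proper Zariski-closed subset of the irreducible surface $U$, as claimed.
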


\begin{proof}
We prove that $\mathcal{U}(\Zbb)$ is finite, and so it is not Zariski-dense in $U$. Indeed, from Equation (2), we have: 
$$ |ax^{2}y^{2}z^{2} + b(x^{2}y^{2} + x^{2}z^{2} + y^{2}z^{2}) + cxyz + d(x^{2} + y^{2} + z^{2})| = |e|. $$
By the triangular inequality, $|a||x^{2}y^{2}z^{2}| - |b|(x^{2}y^{2} + x^{2}z^{2} + y^{2}z^{2}) - |c||xyz| - |d|(x^{2} + y^{2} + z^{2}) \leq |e|$. Recall that we consider that $a, b, c, d, e$ are all \emph{nonzero}.

If $xyz = 0$, without loss of generality (WLOG) assume that $z = 0$, from $(11)$ we obtain
$$ Ad(bx^{2} + d)(by^{2} + d) = k - b^{2}c^{2}d^{2}. $$ Note that $bd \equiv -1$ (mod 7) and $bd \neq 0$, so $bx^{2} + d$ and $by^{2} + d$ are nonzero for $x, y \in \Zbb$. We also have $Ad \neq 0$ and $k - b^{2}c^{2}d^{2} \neq 0$. Therefore, there are only finitely many solutions to this equation for $x, y \in \Zbb$.

If $|xyz|$ is nonzero and bounded, then each of $|x|, |y|, |z|$ is also bounded, hence the finiteness of $\mathcal{U}(\Zbb)$.

If $|xyz|$ is nonzero and unbounded, assume that we have integer solutions such that $|xyz| \geq \max\left\{\ds\frac{12|b|}{|a|}, \ds\frac{4|c|}{|a|}, \ds\frac{12|d|}{|a|}\right\}$. Note that if $|xyz| \neq 0$, then $x^{2} \geq 1$, $y^{2} \geq 1$, and $z^{2} \geq 1$ for $x, y, z \in \Zbb$. As a result, we obtain $\ds\frac{|a|}{4}x^{2}y^{2}z^{2} \geq |b|(x^{2}y^{2} + x^{2}z^{2} + y^{2}z^{2})$, $\ds\frac{|a|}{4}x^{2}y^{2}z^{2} \geq |c||xyz|$, and $\ds\frac{|a|}{4} \geq |d|(x^{2} + y^{2} + z^{2})$. Therefore, $\ds\frac{|a|}{4}x^{2}y^{2}z^{2} \leq |e|$, i.e., $|xyz| \leq 2\sqrt{\ds\frac{|e|}{|a|}}$. Then $|xyz|$ is bounded, which is a contradiction.

In conclusion, $\mathcal{U}(\Zbb)$ is finite and so it is not Zariski-dense in $U$.
\end{proof}

\printbibliography

\textsc{University of Science and Technology of Hanoi, A21 Building, Vietnam Academy of Science and Technology, 18 Hoang Quoc Viet, Cau Giay, Hanoi, Vietnam}\\
\textit{E-mail address}: \href{mailto:dao-quang.duc@usth.edu.vn}{\texttt{dao-quang.duc@usth.edu.vn}}

\end{document}